\newcommand{\impl}[2]{{\bf #1 $\mathbf{\Rightarrow}$ #2 :}}
\numberwithin{equation}{section}
\newtheorem{MainThm}{Pretheorem}
\theoremstyle{definition}
\newtheorem{defn}[equation]{Definition}
\newtheorem{definition-proposition}[equation]{Definition/Proposition}
\newtheorem{remark}[equation]{Remark}
\newtheorem{example}[equation]{Example}
\newtheorem{impexample}[equation]{Important Example}
\theoremstyle{plain}
\newtheorem{lem}[equation]{Lemma}
\newtheorem{thm}[equation]{Theorem}
\newtheorem{prop}[equation]{Proposition}
\newtheorem{proposition}[equation]{Proposition}
\newtheorem{corollary}[equation]{Corollary}
\newcommand{\umbruch}{\mbox{}}
\newcommand{\gra}{\operatorname{gra}}
\newcommand{\Tr}{\operatorname{Tr}}
\newcommand{\bC}{\mathbb{C}}
\newcommand{\bD}{\mathbb{D}}
\newcommand{\bN}{\mathbb{N}}
\newcommand{\bR}{\mathbb{R}}
\newcommand{\bS}{\mathbb{S}}
\newcommand{\bZ}{\mathbb{Z}}
\newcommand{\bK}{\mathbb{K}}
\newcommand{\gJ}{\mathbf{J}}
\newcommand{\gA}{\mathbf{A}}
\newcommand{\gB}{\mathbf{B}}
\newcommand{\ev}{\mathrm{ev}}
\newcommand{\cL}{\mathcal{L}}
\newcommand{\cU}{\mathcal{U}}
\newcommand{\Aut}{\operatorname{Aut}}
\newcommand{\bott}{\operatorname{bott}}
\newcommand{\thom}{\operatorname{thom}}
\newcommand{\Mor}{\operatorname{mor}}
\newcommand{\Lin}{\mathbf{Lin}}
\newcommand{\Cl}{\mathbf{Cl}}
\newcommand{\dom}{\mathrm{dom}}
\newcommand{\eps}{\epsilon}
\newcommand{\End}{\operatorname{End}}
\newcommand{\im}{\operatorname{Im}}
\newcommand{\ind}{\operatorname{index}}
\newcommand{\Kom}{\mathbf{Kom}}
\newcommand{\normalize}[1]{\frac{#1}{(1+{#1}^{2})^{1/2}}}
\newcommand{\pr}{\operatorname{pr}}
\newcommand{\gI}{\mathbf{I}}
\newcommand{\gC}{\mathbf{C}}
\newcommand{\red}{\mathrm{red}}
\newcommand{\spec}{\operatorname{spec}}
\newcommand{\Spin}{\mathrm{Spin}}
\newcommand{\supp}{\operatorname{supp}}
\newcommand{\twomatrix}[4]{\begin{pmatrix} #1 & #2 \\ #3 & #4  \end{pmatrix}}
\newcommand{\Tot}{\mathrm{Tot}}
\newcommand{\scpr}[1]{\langle #1 \rangle}
\newcommand{\cstar}{\mathrm{C}^{\ast}}
\newcommand{\Dir}{\slashed{\mathfrak{D}}}
\newcommand{\spinor}{\slashed{\mathfrak{S}}}
\newcommand{\coker}{\operatorname{coker}}
\newcommand{\smb}{\mathrm{smb}}
\newcommand{\norm}[1]{\| #1 \|}
\newcommand{\vol}{\mathrm{vol}}
\title[Elliptic regularity for Dirac operators on families of noncompact manifolds]{Elliptic regularity for Dirac operators on families of noncompact manifolds}
\author{Johannes Ebert}
\thanks{Partially supported by the SFB 878 ``Groups, Geometry and Actions''}
\address{Mathematisches Institut, Universit\"at M\"unster\\
Einsteinstra{\ss}e 62\\
48149 M\"unster\\
Bundesrepublik Deutschland}
\email{johannes.ebert@uni-muenster.de}
\date{\today}
\keywords{Continuous fields of Hilbert-$\gA$-modules, Functional calculus of unbounded operators on Hilbert modules, Dirac operators, $K$-Theory of $\cstar$-algebras, Clifford algebras}
\begin{document}

\begin{abstract}
We develop elliptic regularity theory for Dirac operators in a very general framework: we consider Dirac operators linear over $\cstar$-algebras, on noncompact manifolds, and in families which are not necessarily locally trivial fibre bundles. 
\end{abstract}

\maketitle

\tableofcontents

\clearpage


The simplest situation where an elliptic operator has an index is when $M$ is a closed Riemannian manifold, $V_0, V_1 \to M$ are two smooth Hermitian vector bundles and $D_0: \Gamma (M;V_0)\to \Gamma (M;V_1)$ is an elliptic differential operator of order $1$ between the spaces of smooth sections of these vector bundles. Under these hypotheses, $D_0$ induces a Fredholm operator $W^1 (M;V_0)\to L^2 (M;V_1)$ from the Sobolev space of order $1$ to the space of $L^2$-sections. The index of $D_0$ is $\ind (D_0):=\dim (\ker (D_0))-\dim (\coker (D_0))\in \bZ =K^0 (*)$. 
For many purposes, a slightly different approach to the index is more convenient. The operator $D_0$ has a formal adjoint $D_0^*: \Gamma (M;V_1)\to \Gamma (M;V_0)$. The formulas
\[
\iota:= \twomatrix{1}{}{}{-1}; \;  D := \twomatrix{}{D_0^*}{D_0}{}
\]
define a \emph{grading} on the vector bundle $V=V_0\oplus V_1$ and a formally self-adjoint elliptic operator $D: \Gamma(M;V) \to \Gamma(M;V)$. It can be shown that $D: \Gamma (M;V) \to L^2 (M;V)$ is an essentially self-adjoint unbounded operator on the Hilbert space $L^2 (M;V)$, so that one can construct the bounded transform $\normalize{D}$ via functional calculus. Then $\normalize{D}$ is a bounded self-adjoint Fredholm operator on $L^2(M;V)$. The index of $D_0$ is encoded in the grading $\iota$: since $D$ is odd, i.e. $D\iota+\iota D=0$, the finite-dimensional vector space $\ker (\normalize{D})=\ker (D)$ is invariant under the involution $\iota$, and $\ind (D)=\Tr (\iota|_{\ker (\normalize{D})})$. 
Many generalizations of these results are well-known and can be found in the literature.

\begin{enumerate}
\item {\bf Noncompact manifolds.} If $M$ is not compact and $D$ is a formally self-adjoint differential operator of order $1$ on a vector bundle $V\to M$, then $D: \Gamma_c (M;V) \to L^2 (M;V)$ is not necessarily essentially self-adjoint. However, if $(M,D)$ is \emph{complete} (see Definition \ref{defn:complete-operator-on-mfd}), then $D$ is essentially self-adjoint. This is a classical result by Wolf \cite{Wolf} and Chernoff \cite{Chernoff}, see also \cite[Proposition 10.2.10]{HR}. 
Even if $D$ is essentially self-adjoint, it is not necessarily the case that $\normalize{D}$ is Fredholm. This happens for example if $D$ is invertible at infinity, i.e. there is a compact $K \subset M$ and $c>0$ such that $\norm{D u} \geq c \norm{u}$ whenever the support of $u$ does not meet $K$. 
\item {\bf Families of operators.} Instead of a single manifold $M$, we consider a fibre bundle $\pi:M \to X$ over a compact base space $X$ with closed fibres. For families $D= (D_x)_{x \in X}$ of elliptic differential operators on the fibres $M_x$, Atiyah and Singer \cite{ASIoeoIV} defined a family index $\ind (D) \in K^0 (X)$. This can be thought of as the formal difference of the vector bundles $\ker (D)-\coker (D)$, but there is the problem that the kernels and cokernel of $D_x$ only form a vector bundle over $X$ under the very special hypothesis that their dimensions are constant. It is conceptually clearer to generalize the definition of $K^0 (X)$, so that the family of Fredholm operators $(\normalize{D_x})_{x \in X}$ on the Hilbert spaces $(L^2 (M_x, V_x)_x)$ directly represents an element of $K^0 (X)$. 
\item {\bf Operators linear over $\cstar$-algebras.} In the context of the Novikov conjecture, Mishchen\-ko and Fomenko \cite{MF} developed index theory for differential operators $D$ acting on the sections of a bundle whose fibres are finitely generated projective modules over a group $\cstar$-algebra $\gC^* (G)$ (here $G$ is a discrete group) and whose base is a closed manifold. In this case, the index of $D$ is an element of the $K$-theory group $K_0 (\gC^* (G))$.
The same construction was used by Rosenberg \cite{Ros} in connection with metrics of positive scalar curvature. More generally, $\gC^* (G)$ can be replaced by an arbitrary $\cstar$-algebra $\gA$. The basic regularity results for closed manifolds are due to \cite{MF} and have been generalized to noncompact manifolds in \cite{Stolz2}, \cite{Bunke} and \cite{HPS}.
\end{enumerate}
In \cite{JEIndex2}, we will consider the following very general situation. Instead of a single manifold, we consider a submersion $\pi:M \to X$ with possibly noncompact fibres $M_x:= \pi^{-1}(x)$, and which is possibly not a locally trivial fibre bundle. We consider bundles of finitely generated projective $\gA$-modules $V\to M$ and families of $\gA$-linear elliptic differential operators $(D_x)_x$ on $V|_{M_x}$. We want to formulate precise conditions under which $D$ has an index in the $K$-theory group $K^0 (X;\gA)$. Here $K^0 (\_; \gA)$ is the generalized cohomology theory represented by the $K$-theory spectrum of the $\cstar$-algebra $\gA$. Finally, we are also interested in graded and Real $\cstar$-algebras and operators obeying additional Clifford symmetries, which then have indices in the groups $KR^* (X; \gA)$, for varying values of $*$. By this level of generality, all the above situations are covered.
We do not attempt to study more general operators, such as differential operators of order $\geq 2$ or pseudo-differential operators. 

Let us give a brief overview of the paper, stating the results only informally. After setting up the relevant definitions, Section \ref{sec:diffops-overcstar} is devoted to the proof of the following result, see Theorem \ref{chernoff-theorem} for the precise statement.
\begin{MainThm}\label{self-adjointness-intro}
Let $D$ be an  $\gA$-linear differential operator of order $1$ on a manifold $M$. Under a suitable completeness hypothesis, $D$ is self-adjoint in the sense of the theory of unbounded operators on Hilbert-$\gA$-modules.
\end{MainThm}
This has been proven before by Hanke, Pape and Schick \cite{HPS} by a different method; our proof uses a ``local-to-global principle'' for self-adjoint operators due to Kaad and Lesch \cite{KL}. 
In section \ref{sec:parametrized-regularity}, we study families of operators on noncompact manifolds (the families of manifolds are submersions, not fibre bundles). It turns out that the theory of continuous fields of Banach spaces introduced by Dixmier and Douady \cite{DD} is tailor-made for this problem. We define unbounded self-adjoint operator families on continuous fields of Hilbert $\gA$-modules, and discuss the functional calculus for those. Once this machinery is in place, Theorem \ref{self-adjointness-intro} can be generalized with very little effort.
\begin{MainThm}
Let $\pi: M \to X$ be a submersion and let $D$ be a family of $\gA$-linear differential operators of order $1$ on the fibres of $\pi$. Under a suitable completeness hypothesis, $D$ defines a self-adjoint unbounded operator family on a continuous field of Hilbert-$\gA$-modules over $X$, son that one can define the bounded transform $\normalize{D}$.
\end{MainThm}
In subsection \ref{subsec:fredholmtheory-dirac}, we study conditions which imply that $\normalize{D}$ is a Fredholm family (invertible modulo compact operator families). The most important result in this direction is Theorem \ref{fredholmness:invertibility-at-infty}, which may be stated informally as follows.
\begin{MainThm}\label{fredholmness-introdu}
Assume that $D$ is invertible at infinity. Then $\normalize{D}$ is a Fredholm family.
\end{MainThm}
In the unparametrized case, this is well-known, see e.g. \cite{HR} or \cite{Bunke}.
In the final section \ref{sec:KTheory}, we show how to define the topological $K$-theory $K^{*}(X,Y;\gA)$ in terms of Fredholm families on continuous fields of Hilbert-$\gA$-modules. In the situation of Theorem \ref{fredholmness-introdu}, this allows us to define the index $\ind(D) \in K^0 (X;\gA)$.

\section{Differential operators linear over \texorpdfstring{$\cstar$}--algebras}\label{sec:diffops-overcstar}

\subsection{Definitions from abstract functional analysis}\label{subsec:defns-abstractFA}

\subsubsection{$\cstar$-algebras and Hilbert modules}

Throughout this paper, $\gA$ will denote a complex $\cstar$-algebra with unit. There are two types of extra structure on $\gA$ that we are interested in. The first is a $\bZ/2$-\emph{grading}, which is a norm-preserving $\bC$-linear $*$-automorphism $\alpha$ of $\gA$ such that $\alpha^2=1$.
For more information on graded $\cstar$-algebras, the reader should consult \cite[\S 14]{Bla}.
The second extra structure on $\gA$ is a \emph{Real structure}, which is a \emph{conjugate-linear} norm-preserving $*$-automorphism $\kappa$ of $\gA$ such that $\kappa^2=1$. Often, we write the Real structure as $\overline{a}:= \kappa (a)$. 
For more details on Real $\cstar$-algebras, we refer to \cite{Schr} and \cite{Goodearl}.
If $\gA$ has both, a grading and a Real structure, then we require that they are compatible in the sense that $\kappa \alpha = \alpha \kappa$. 

We are mainly interested in a few types of $\cstar$-algebras. If $H$ is a complex Hilbert space, we denote by $\Lin (H)$ and $\Kom (H)$ the $\cstar$-algebras of bounded and compact operators on $H$. A grading and Real structure on $H$ defines such structures on $\Lin (H)$ and $\Kom (H)$. 

If $X$ is a locally compact space, then $\gC_0 (X)$ is the $\cstar$-algebra of complex-valued functions on $X$ that vanish at infinity, equipped with the trivial grading ($\alpha=1$). An involution $\tau: X \to X$ (a ``Real structure'' in the sense of \cite{AtiKR}), defines a Real structure on $\gC_0 (X)$ by $(\kappa f)(x):= \overline{f(\tau (x))}$. More generally, if $\gA$ is any $\cstar$-algebra, then we let $\gA_0 (X)$ be the $\cstar$-algebra of all continuous functions $f:X \to \gA$ which vanish at infinity.

Next, we consider the \emph{Clifford algebras}. Let $V,W$ be (finite-dimensional) euclidean vector spaces. The Clifford algebra $\Cl (V \oplus W^-)$ is the $\gC$-algebra generated by the vector space $V \oplus W$, subject to the relations
\[
(v,w) (v',w') + (v',w') (v,w) = - 2\scpr{v,v'} + 2 \scpr{w,w'}.
\]
For $(v,w) \in V \oplus W$, we set $(v,w)^*:=   (-v,w)$ and extend this to a $\gC$-antilinear antiautomorphism of $\Cl (V \oplus W^-)$. A Real structure on $\Cl (V \oplus W^-)$ is defined on generators by $\overline{(v,w)}: = (v,w)$ (and then extended to a $\bC$-antilinear automorphism). 
The grading on $\Cl (V \oplus W^-)$ is given by $\alpha (v,w)= - (v,w)$ (and then extended to a $\bC$-linear automorphism). To define the norm on $\Cl (V \oplus W^-)$, recall that the exterior algebra $\Lambda^* (V \oplus W)$ has a natural inner product. For $x \in V\oplus W$, let $\eps_x:\Lambda^* (V \oplus W)\to \Lambda^* (V \oplus W) $ be the exterior product with $x$. Set
\[
c(v,w):= \eps_{(v,w)} + \eps_{(-v,w)}^* ;
\]
this extends to an injective $*$-algebra homomorphism $c:\Cl (V \oplus W^-)\to \Lin (\Lambda^* (V \oplus W))$. The norm on $\Cl(V\oplus W^-)$ is defined by $\norm{x}:= \norm{c(x)}$. We write $\Cl^{p,q}:= \Cl ( \bR^p  \oplus (\bR^q)^-)$. 

The last type of $\cstar$-algebras we want to consider are the group $\cstar$-algebras. Let $G$ be a countable discrete group. There are two $\cstar$-algebras associated with $G$, the \emph{reduced group $\cstar$-algebra} $\gC^*_{\red}(G)$ and the \emph{maximal group $\cstar$-algebra} $\gC^*_{\max}(G)$. Both are completions of the complex group ring $\bC [G]$, with respect to two norms. The maximal norm of $x \in \bC [ G]$ is $\norm{x}_{\max}:= \sup_{\rho} \norm{\rho(x)}$, where $\rho$ ranges over all unitary representations of $G$ on Hilbert spaces, while the reduced norm is $\norm{x}_{\red} := \norm{\rho_0 (x)}$, where $\rho_0: G \to \Lin (\ell^2 (G))$ is the regular representation. If there is no need to specify which group algebra is used, we just write $\gC^* (G)$. The conjugation on $\bC [G]$ induces a Real structure, and we only consider the trivial grading $\alpha=1$ on $\gC^* (G)$.

\begin{defn}\label{defn:hilbertmodule}
Let $\gA$ be a unital complex $\cstar$-algebra. 
A \emph{pre-Hilbert-$\gA$-module} is a right-$\gA$-module $E$, together with an $\gA$-valued inner product, i.e. a map $E \times E \to \gA$, $(x,y) \mapsto \scpr{x,y}$ such that 
\begin{enumerate}
\item $\scpr{x_0+x_1,y_0+y_1} = \scpr{x_0,y_0}+ \scpr{x_1,y_0}+ \scpr{x_0,y_1}+ \scpr{x_1,y_1}$,
\item $ \scpr{x,ya} = \scpr{x,y} a$,
\item $\scpr{x,y} = \scpr{y,x}^*$ and
\item $\scpr{x,x} \geq 0$ and $\scpr{x,x}=0 \Rightarrow x=0$
\end{enumerate}
hold for all $x,x_i,y,y_i \in E$ and $a \in \gA$. Note that $\scpr{x,x} \in \gA$, and the condition $\scpr{x,x} \geq 0$ is to be understood in the sense that it is a positive element of that $\cstar$-algebra.
\end{defn}

The scalar product induces a norm on $E$, namely $\norm{x} := \norm{\scpr{x,x}}^{1/2}$.
We say that $E$ is a \emph{Hilbert-$\gA$-module}, if the vector space $E$ is complete with respect to this norm. We refer the reader to \cite{Lance}, \cite[\S 15]{WO} for more background on Hilbert modules. If $\gA$ has a grading and a Real structure, we can talk about Real and graded Hilbert modules. It is required that $E$ has a Real structure and a grading, and that the structure maps $E \odot \gA \to E$ and $E \odot E \to \gA$ are compatible with the Real structure and grading on both sides (with the symbol $\odot$, we denote the algebraic tensor product of complex vector spaces).
In a Hilbert-$\gA$-module, the Cauchy-Schwarz inequality 
\begin{equation}\label{cauchy-schwarz}
\norm{\scpr{x,y}} \leq \norm{x} \norm{y}
\end{equation}
holds: by \cite[Proposition 1.1]{Lance}, one has $\scpr{x,y}\scpr{x,y}^*=\scpr{x,y} \scpr{y,x} \leq \norm{x}^2 \scpr{y,y}$. These elements are positive in $\gA$, and so by the $\cstar$-identity
\[
\norm{\scpr{x,y}}^2=\norm{\scpr{x,y}\scpr{x,y}^*} \leq \norm{x}^2 \norm{y}^2.
\]
The reader should recall at this point the basic notions of adjointable operators and compact operators of Hilbert $\gA$-modules \cite[\S 15.2]{WO}. By $\Lin_{\gA} (E)$, we denote the $\cstar$-algebra of adjointable operators $E$, and 
by $\Kom_{\gA}(E)\subset \Lin_{\gA}(E)$ the ideal of compact operators on $E$. 

\subsubsection{Bundles of projective $\gA$-modules}

Let $P$ be a finitely generated projective right $\gA$-module, equipped with a Banach norm such that the multiplication map $P \times \gA \to \gA$ is continuous. It is not hard to see that there is an $\gA$-valued inner product on $P$ which induces a norm on $P$ which is equivalent to the given norm, see \cite[Example 1.2.2]{SoTr}. If $P$ has a grading and a Real structure, we can make the inner product compatible with these extra structures. 
When equipped with such an inner product, $P$ becomes a Hilbert $\gA$-module, and the Banach algebras $\End_{\gA} (P)$ of $\gA$-linear continuous endomorphisms and $\Lin_{\gA} (P)$ coincide (since $P$ is finitely generated, there is even an equality $\Lin_{\gA}(P) = \Kom_{\gA}(P)$). The group $\Aut_{\gA} (P)$ of continuous $\gA$-linear automorphisms is an open subgroup in the Banach algebra $\Lin_{\gA}(P)$ and hence a Banach Lie group. 

Since $\Aut_{\gA}(P)$ is a Banach Lie group, it makes sense to talk about smooth fibre bundles $V \to M$ with fibre $P$ and structure group $\Aut_{\gA}(P)$ over a smooth manifold $M$; we call such bundles \emph{smooth bundles of projective finitely generated $\gA$-modules}.
By polar decomposition, the group $U(P)$ of unitary automorphisms is a strong deformation retract of $\Aut_{\gA} (P)$, and this has the consequence that each smooth bundle of finitely generated projective $\gA$-modules $V \to M$ admits a smooth fibrewise $\gA$-valued inner product.

An example of this structure arises when $G$ is a countable group and $Q \to M$ is a Galois cover of $M$ with group $G$ (the universal cover $Q=\tilde{M}$ is the prototypical example, with $G=\pi_1 (M)$). The group $G$ is a subgroup of the group $U (\gC^* (G))$ of unitary elements in the group $\cstar$-algebra (reduced or maximal). Therefore, it acts from the left on $\gC^* (G)$, and this action commutes with the right multiplication by $\gC^* (G)$. Thus we might form 
$\cL_Q:=Q \times_G \gC^* (G) \to M$, the \emph{Mishchenko-Fomenko line bundle}. 
The formula $\scpr{a,b} := a^* b$ defines a $\gC^* (G)$-valued inner product on the projective $\gC^* (G)$-right module $\gC^* (G)$, and $\scpr{\_,\_}$ is invariant under left-multiplication by elements $g \in G$.
Therefore, the bundle $\cL_Q$ has a canonical $\gC^* (G)$-valued inner product. 

Another more elementary example comes from a \emph{Spin structure} on a vector bundle $V\to X$ of rank $d$, which is given by a $\Spin (d)$-principal bundle $P \to X$ and an isometry $P \times_{\Spin (d)} \bR^d \cong V$. Recall that $\Spin (d)$ is a subgroup of the unitary subgroup of $\Cl^{d,0}$, so that we can form the spinor bundle $\spinor:=P \times_{\Spin (d) } \Cl^{d,0}$, where $\Spin (d)$ acts by left-multiplication. The formula $\scpr{x,y} := x^*y$ defines a $\Cl^{d,0}$-valued inner product on the fibres of $\spinor$. Moreover, there are a canonical Real structure and a grading on $\spinor$. 

\subsection{Differential operators linear over \texorpdfstring{$\cstar$}--algebras}\label{sec:diffops-ga-linear}

Let $V \to M$ be a smooth bundle of finitely generated projective $\gA$-modules on a smooth manifold. The space of compactly supported smooth sections of $V$ is denoted by $\Gamma_c (M;V)$. An $\gA$-linear map $D: \Gamma_c (M;V) \to \Gamma_c (M;V)$ is called \emph{differential operator of order $1$} if for each coordinate system $x: M \supset U \to \bR^d$ and each smooth local trivialization $h:V|_U \to U \times P$, there are smooth functions $b, a_1, \ldots, a_d : U \to \Lin_{\gA}(P)$ such that with respect to these local coordinates, $D$ has the form
\[
Ds (x)= b (x) s(x) + \sum_{j=1}^d a_j (x) \partial_{x_j} s(x). 
\]
The \emph{symbol} of $D$ is the map $T^* M \otimes V \to V$ defined using the commutator by $\smb_D (d_x f)s(x) :=i [D,f] s(x)$ (it only depends on $d_x f$ and $s(x)$). 
If $V$ is equipped with a $\gA$-valued inner product $\scpr{\_, \_}_{\gA}$ and $M$ carries a Riemannian metric with volume measure $\vol_M$, then $\Gamma_c (M;V)$ becomes a pre-Hilbert-$\gA$-module with the inner product
\begin{equation}\label{inner-product-on-space-of-cptsupp-section}
\scpr{s,t} := \int_M \scpr{s(x), t(x)} d \vol (x) \in \gA,
\end{equation}
and we define the Hilbert-$\gA$-module $L^2 (M;V)$ as the completion with respect to the norm induced by $\scpr{\_, \_}$. We say that $D$ is \emph{formally self-adjoint} if $\scpr{Ds,t} = \scpr{s,Dt}$ for all $s,t \in \Gamma_c (M;V)$. A \emph{Dirac operator} is a formally self-adjoint differential operator of order $1$ such that $\smb_D (df)^2 = - |df|^2$. If $V$ is equipped with a grading $\iota$, we only consider Dirac operators which are odd in the sense that $D \iota + \iota D=0$, and if there is a Real structure on $V$, we require that $D$ is real, i.e. $D \overline{s}= \overline{Ds}$ for all sections $s$.

A classical example arises if $T M$ has a spin structure with spinor bundle $\spinor_M$. In that case, the spin Dirac operator $\Dir: \Gamma_c (M; \spinor_M) \to \Gamma_c (M; \spinor_M)$ is a $\Cl^{d,0}$-linear operator \cite[\S II.7]{LM}.
If $M$ comes furthermore equipped with a map $\varphi: M \to BG$ to the classifying space of a countable discrete group $G$, we can pull back the Mishchenko-Fomenko line bundle on $BG$ (or first pull back the universal $G$-covering $EG \to BG$ to $M$ to a Galois covering $Q_{\varphi} \to M$ and then form $\cL_{Q_{\varphi}}=: \cL_{\varphi}$). On the tensor product $\spinor_M \otimes \cL_{\varphi}$, we have the Dirac operator $\Dir_{\cL_{\varphi}}$ (this construction is due to Rosenberg \cite{Ros}).

\subsection{Families of differential operators linear over \texorpdfstring{$\cstar$}--algebras}\label{sec:families-diffops-ga-linear}

Now we turn to \emph{families of differential operators}. Let $\pi: M \to X$ be a submersion with $d$-dimensional fibres $M_x := \pi^{-1} (x)$. Let $T_v M:= \ker (d \pi)\subset TM$ be the vertical tangent bundle of $M$. If $f: M \to \bR$ is a function, the fibrewise differential $d_v f$ of $f$ is the restriction of $df$ to $T_v M$. Let $V \to M$ be a smooth bundle of finitely generated projective $\gA$-modules, equipped with an $\gA$-valued inner product.  
A \emph{family of differential operators on $V$} is a differential operator $D$ on $V \to M$ such that $D$ commutes with multiplication by functions on $X$, i.e. $D (f \circ \pi) = (f \circ \pi) D$ for all smooth $f: X \to \bR$. For each $x \in X$ and $s\in \Gamma_c (M;V)$, the restriction $(Ds)|_{M_x}$ only depends on the restriction $s|_{M_x}$ of $s$. Thus, $D$ gives rise to a family $(D_x)_{x \in X}$, where $D_x$ is a differential operator on $\Gamma_c (M_x, V_x)$, and $(D_x)_{x\in X}$ determines $D$ uniquely. 

A \emph{fibrewise Riemannian metric} on $M$ is a smooth bundle metric $g$ on $T_v M$ (so in particular each fibre $M_x$ is a Riemannian manifold). If such a fibrewise Riemannian metric is given, we define $\gA$-valued scalar products as follows: for each $x \in X$ and $s,t \in \Gamma_c (M_x;V_x)$, we set
\begin{equation}\label{inner-product-for-sections-family}
\scpr{s,t}_x := \int_{M_x} \scpr{s(x), t(x)} d \vol_{M_x} (x) \in \gA.
\end{equation}
A family of differential operators is \emph{formally self-adjoint} if for each $x \in X$ and all sections $s,t \in \Gamma_c (M;V)$, we have $\scpr{Ds,t}_x = \scpr{s,Dt}_x$. If $D$ is formally self-adjoint and the \emph{fibrewise symbol} $\smb_D (d_v f) := i [D,f]$ satisfies $\smb_D (d_v f)^2 = - |d_v f|^2$, then we call $D$ a \emph{family of $\gA$-linear Dirac operators}.

\subsection{Unbounded operators on Hilbert modules}\label{subsec:unboundedops-hilbmod}

We have to deal with unbounded operators on Hilbert-$\gA$-modules. Our basic reference is \cite{Lance}, and we also recall results from \cite{KL}. The first definitions in the theory are parallel to the classical theory of unbounded operators on Hilbert spaces.

\begin{defn}\label{defn:unbounded-ooperator-hilbertmodule}
Let $E$ be a Hilbert $\gA$-module. A \emph{densely defined unbounded operator on $E$} is an $\gA$-linear operator $D: \dom (D) \to E$ which is defined on the dense $\gA$-submodule $\dom (D)\subset E$. We say that $D$ is \emph{symmetric} if $\scpr{u,Dv}=\scpr{Du,v}$ holds for all $u,v \in \dom (D)$. The \emph{graph norm} $\norm{\_}_D = \norm{\_}_{\gra}$ on $\dom (D)$ is the norm induced by the $\gA$-valued inner product $\scpr{u,v}_D := \scpr{u,v} + \scpr{Du,Dv}$. The graph $\gra (D):= \{(x,Dx) \vert x \in \dom (D)\} \subset E \oplus E$ with the induced inner product is isometric to $(\dom (D), \scpr{\_, \_}_D)$. 
We say that $D$ is \emph{closed} if $\gra (D) \subset E \oplus E$ is closed or equivalently if $(\dom (D), \scpr{\_,\_}_D)$ is complete, i.e. a Hilbert-$\gA$-module.
\end{defn}

There are estimates
\begin{equation}\label{estimate-unbounded-op1}
 \norm{u}^2 _D = \norm{\scpr{u,u}+\scpr{Du,Du}} \leq \norm{u}^2 + \norm{Du}^2
\end{equation}
and 
\begin{equation}\label{estimate-unbounded-op2}
  \norm{u}^2 _D \geq \frac{1}{2} (\norm{u}^2 + \norm{Du}^2).
\end{equation}
For the second one, note that $\scpr{u,u}_D = \scpr{u,u} + \scpr{Du,Du} \geq \scpr{u,u}$ implies $\norm{u}^2_{D} \geq \norm{u}^2$. Similarly $\norm{u}^2_D \geq \norm{Du}^2$, so that $ \norm{u}^2 _D \geq \max\{\norm{u}^2, \norm{Du}^2\} \geq \frac{ 1}{2} (\norm{u}^2+ \norm{Du}^2)$. Moreover, there is the useful equation
\begin{equation}\label{estimate-unbounded-op3}
 \scpr{u,u}_D = \scpr{(D+i)u,(D+i)u}
\end{equation}
for \emph{symmetric} $D$.

If $D$ is densely defined, the closure of $\gra(D)$ may or may not be the graph of a densely defined operator, and if it is, we say that $D$ is \emph{closeable}. A \emph{symmetric} densely defined operator is closeable and its closure is symmetric \cite[Lemma 2.1]{KL}.
A \emph{core} for a closed operator $D: \dom (D) \to E$ is a submodule $F \subset \dom (D)$ such that $D$ is the closure of $D|_F$.
When we construct an unbounded operator, we typically start with $D_0:\dom (D_0)\to E$ and then pass to the closure $D: \dom (D) \to E$. In that case, we call $\dom (D_0)$ the \emph{initial domain} of $D$; $\dom (D_0)$ is a core for $D$.

\begin{defn}\label{defn:adjoint-of-unbounded}
The \emph{adjoint} of a densely defined operator $D$ is the unbounded operator with domain
\[
\dom (D^*) = \{y \in E \vert \exists z \in E: \scpr{Dx,y} = \scpr{x,z} \forall x \in \dom (D)\},
\]
and for $y\in \dom (D^*)$, we let $D^* y:= z$ (the element $z$ uniquely determined because $\scpr{\_,\_}$ is definite). It is not necessarily true in general that $\dom (D^*) \subset E$ is dense, but if $D$ is symmetric, then $\dom (D) \subset \dom (D^*)$ and hence $D^*$ is densely defined. 
One says that $D$ is \emph{self-adjoint} if $D=D^*$.
\end{defn}

The adjoint of any densely defined operator is closed. A closed symmetric operator is \emph{regular} if\footnote{Compositions of unbounded operators need to be treated with care, see \cite[p. 95]{Lance}.} $1+D^* D$ has dense range. The class of unbounded operators which are reasonably well-behaved are the regular self-adjoint operators. These have a spectral-theoretic characterization.

\begin{prop}\cite[Proposition 4.1]{KL}\label{spectral-criterion}
For a closed, symmetric, densely defined operator $D$ on a Hilbert $\gA$-module $E$, the following are equivalent:
\begin{enumerate}
\item $D$ is regular and self-adjoint.
\item There is $\lambda \in \bC \setminus \bR$ such that the operators $(D+ \lambda), (D-\lambda): \dom (D) \to E$ are invertible.
\item For all $\lambda \in \bC \setminus \bR$, the operator $(D-\lambda):\dom (D) \to E$ is invertible.
\end{enumerate}
\end{prop}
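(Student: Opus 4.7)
My plan is to establish $(1) \Rightarrow (3) \Rightarrow (2) \Rightarrow (1)$, with the middle implication immediate. For $(1) \Rightarrow (3)$, I will invoke the continuous functional calculus for regular self-adjoint operators on Hilbert $\gA$-modules (\cite{Lance}, Ch.~10), applied to the bounded continuous function $t \mapsto (t-\lambda)^{-1}$ on $\bR \supset \spec(D)$; this yields a bounded adjointable two-sided inverse of $D - \lambda$.

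The substance lies in $(2) \Rightarrow (1)$. The first input is a symmetry estimate: for $\mu = \alpha + i\beta \in \bC \setminus \bR$, using that $\scpr{Du,u}$ is self-adjoint in $\gA$ one computes
\[
\scpr{(D-\mu)u,(D-\mu)u} = \scpr{(D-\alpha)u,(D-\alpha)u} + \beta^2 \scpr{u,u} \geq \beta^2 \scpr{u,u},
\]
so $(D-\mu)$ is injective with closed range, and $\norm{(D-\mu)^{-1}} \leq |\Im\mu|^{-1}$ wherever defined. The crucial second step is to propagate the surjectivity at $\pm\lambda$ across each half-plane: setting $S_\pm := \{\mu \in \bC^\pm : (D-\mu)\text{ is surjective}\}$, openness of $S_\pm$ in $\bC^\pm$ follows from a Neumann-series perturbation using the uniform estimate above, and closedness from the resolvent identity, which makes $((D-\mu_n)^{-1}v)_n$ Cauchy for $\mu_n \to \mu \in \bC^\pm$, combined with the closedness of $D$. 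Since $\lambda$ and $-\lambda$ lie in opposite half-planes, hypothesis~(2) forces $S_\pm = \bC^\pm$; this already yields~(3).

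For self-adjointness I exploit that $(D-\bar\lambda)$ is now bijective. Given $y \in \dom(D^*)$, I pick $x \in \dom(D)$ with $(D-\bar\lambda)x = (D^* - \bar\lambda)y$; then $w := y - x$ satisfies $(D^* - \bar\lambda)w = 0$. But $\ker(D^* - \bar\lambda)$ equals the $\gA$-orthogonal complement of the range of $(D-\lambda)$, which is all of $E$, and therefore vanishes. Hence $y = x \in \dom(D)$ and $D = D^*$. For regularity, $1 + D^*D = 1 + D^2 = (D+i)(D-i)$ on $\dom(D^2)$; given $v \in E$, setting $u := (D+i)^{-1}v \in \dom(D)$ and $x := (D-i)^{-1}u \in \dom(D)$ yields $Dx = u + ix \in \dom(D)$, so $x \in \dom(D^2)$ with $(1+D^2)x = v$. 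Thus $1 + D^*D$ has range $E$, a fortiori dense range, so $D$ is regular.

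The main obstacle is the propagation step. A naive direct attack --- solving $(D+\lambda)x = (D^* + \lambda)y$ --- produces $(D^* + \lambda)(y-x) = 0$ and demands $\ker(D^* + \lambda) = 0$, yet surjectivity of $(D+\lambda)$ alone yields only $\ker(D^* + \bar\lambda) = 0$. These match exactly when $\lambda$ is purely imaginary; for general non-real $\lambda$ one must first transport surjectivity from $\{\pm\lambda\}$ to $\{\pm\bar\lambda\}$, and the open--closed connectedness argument on each half-plane (powered by the uniform lower bound from the first step) is precisely what achieves this.
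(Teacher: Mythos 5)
Your argument is correct. Note that the paper itself offers no proof of this proposition --- it is quoted from Kaad--Lesch \cite[Proposition 4.1]{KL} --- so what you have done is supply a self-contained argument where the paper defers to a citation. Your route is the classical von Neumann deficiency-type argument transplanted to Hilbert modules, and the two points where module-theoretic care is genuinely needed are both handled properly: first, the identity $\ker(D^*-\bar\lambda)=\operatorname{ran}(D-\lambda)^{\perp}$ is used only in the form ``$\operatorname{ran}(D-\lambda)=E$ forces $\ker(D^*-\bar\lambda)=0$'', which requires nothing beyond nondegeneracy of the $\gA$-valued inner product ($\scpr{w,w}=0\Rightarrow w=0$), so the failure of orthogonal complementation in Hilbert modules never enters; second, you correctly identified that surjectivity at $\lambda$ kills the deficiency space at $\bar\lambda$ rather than at $\lambda$, and your open--closed propagation over each half-plane (openness by Neumann series with the uniform bound $\norm{(D-\mu)^{-1}}\leq|\Im\mu|^{-1}$, closedness by the resolvent identity plus closedness of $D$) resolves exactly this mismatch. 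The coercivity estimate you start from is the paper's inequality (\ref{coercivity}), the surjectivity of $1+D^2$ via the factorization $(D+i)(D-i)$ gives regularity in the sense defined in \S\ref{subsec:unboundedops-hilbmod}, and the implication $(1)\Rightarrow(3)$ via the functional calculus of Theorem \ref{thm:kucerovsky} (parts (1) and (4) applied to $f(t)=(t-\lambda)^{-1}$) is legitimate and non-circular, since that calculus is established for operators already known to be self-adjoint and regular. I see no gaps.
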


If $D: \dom (D) \to E$ is symmetric, $u\in \dom (D)$ and $\lambda \in \bC$, then
\[
\scpr{(D+\lambda)u, (D+\lambda)u} = \scpr{(D+\Re(\lambda)) u,(D+\Re(\lambda)) u} + |\Im (\lambda)|^2 \scpr{u,u}
\]
and we obtain, as in the classical situation, the inequality
\begin{equation}\label{coercivity}
\norm{(D+\lambda) u}^2 \geq |\Im (\lambda)|^2 \norm{u}^2.
\end{equation}

\subsection{Self-adjointness of \texorpdfstring{$\gA$}--linear differential operators}\label{subsec:proof-chernoof}

We return to the setting introduced in section \ref{sec:diffops-ga-linear}: $V \to M$ is a bundle of finitely generated projective $\gA$-modules with inner product on a Riemannian manifold and $D: \Gamma_c (M;V) \to \Gamma_c (M;V)$ is a formally self-adjoint differential operator of order $1$. Then $D$ defines an unbounded operator on $L^2 (M;V)$ with initial domain $\Gamma_c (M;V)$.  
We want to prove that the closure of $D$, denoted\footnote{We are slightly abusing notation by denoting three different objects by the same letter $D$. When the distinction becomes crucial, we will introduce more precise notation.} $D: \dom (D) \to L^2 (M;V)$, is self-adjoint and regular, under suitable hypotheses. 

\begin{defn}\label{defn:coercive-function}
A \emph{coercive function} on $M$ is a proper smooth function $f: M \to \bR$ which is bounded from below.
\end{defn}

\begin{defn}\label{defn:complete-operator-on-mfd}
Let $M, V,D$ be as before. We say that $(M,D)$ is \emph{complete} if there exists a coercive function $f: M \to \bR$ such that the commutator $[D,f]$ is bounded.
\end{defn}

In \cite{HR}, \S 10.2, this condition is called ``$M$ is complete for $D$''. The origin of the word is that a Dirac operator on a complete Riemannian manifold is complete in the above sense of the word \cite[p. 623]{Wolf}.

\begin{thm}\label{chernoff-theorem}
If $(M,D)$ is complete, then $D: \dom (D) \to L^2 (M;V)$ is self-adjoint and regular.
\end{thm}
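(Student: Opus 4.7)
By the spectral-theoretic criterion of Proposition \ref{spectral-criterion}, it suffices to prove that the operators $D \pm i : \dom(D) \to L^2(M;V)$ are bijective. The coercivity estimate \eqref{coercivity} immediately yields injectivity and closed range for both $D+i$ and $D-i$, since $\norm{(D\pm i)u} \geq \norm{u}$ for all $u \in \dom(D)$. What remains is to establish density of the two ranges, which amounts to surjectivity.

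The completeness hypothesis will be exploited through a family of cutoff functions built from the coercive $f$. Fix a smooth $\phi : \bR \to [0,1]$ with $\phi \equiv 1$ on $(-\infty, 1]$ and $\phi \equiv 0$ on $[2, \infty)$, and set $\chi_n := \phi \circ (f/n) \in C^\infty(M)$. Since $f$ is proper and bounded from below, each $\chi_n$ has compact support in $M$, the sequence $\chi_n$ tends to $1$ locally uniformly, and multiplication by $\chi_n$ defines an adjointable operator of norm at most $1$ on $L^2(M;V)$ preserving the initial domain $\Gamma_c(M;V)$. Because $D$ is first-order and $\chi_n$ is a smooth function of $f$, the Leibniz rule gives $[D,\chi_n] = \tfrac{1}{n}\phi'(f/n)\,[D,f]$, so the completeness hypothesis yields $\norm{[D,\chi_n]} \leq \tfrac{1}{n}\norm{\phi'}_\infty \,\norm{[D,f]} \longrightarrow 0$. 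In particular $\chi_n$ preserves $\dom(D)$, and the commutators $[D,\chi_n]$ are a null sequence of bounded operators.

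With these cutoffs in hand I would invoke the local-to-global principle for self-adjointness of Kaad--Lesch \cite{KL}. That principle reduces regularity and self-adjointness of $D$ to the corresponding local statement: for each $n$, the restriction of $D$ to a neighbourhood of the compact set $\supp(\chi_n)$ must be (essentially) self-adjoint and regular. On such a compact region the data can be extended to a closed Riemannian manifold (for instance by a doubling construction or by gluing into a large closed manifold and extending $V$ and $D$ as a formally self-adjoint first-order elliptic $\gA$-linear operator), whereupon the classical Mishchenko--Fomenko elliptic regularity \cite{MF} applies and identifies the closure of $D$ with a regular self-adjoint operator whose domain is the first Sobolev module. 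Feeding these local statements back through the Kaad--Lesch principle, together with the uniform bound $\norm{[D,\chi_n]} = O(1/n)$ and the convergence $\chi_n \to 1$, forces density of $\im(D \pm i)$ and completes the proof.

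The main obstacle will be the bookkeeping in the local-to-global step: one must verify that the concrete cutoff sequence $\chi_n$ satisfies all the abstract hypotheses of \cite{KL}, that its interaction with compositions of unbounded operators is handled with the care required by \cite[p.~95]{Lance}, and that the local extensions to closed manifolds are truly compatible with the global data on $\supp(\chi_n)$. All remaining ingredients — the coercivity inequality, the regularity theory on closed manifolds, and the estimate on $\norm{[D,\chi_n]}$ — are either classical or follow immediately from the completeness hypothesis.
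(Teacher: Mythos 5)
Your opening reductions are fine, and the cutoff sequence $\chi_n=\phi\circ(f/n)$ with $\norm{[D,\chi_n]}=O(1/n)$ is exactly the right tool (it reappears, in the form $g_n=h_n\circ f$, in the second half of the paper's argument). The genuine gap is in the pivotal step: you have misread what the Kaad--Lesch ``local-to-global principle'' localizes over. Theorem \ref{thm:kaadlesch} localizes at cyclic $*$-\emph{representations of the coefficient algebra} $\gA$: it says $D$ is self-adjoint and regular on the Hilbert $\gA$-module iff each localization $D^{\pi}$, an operator on the honest Hilbert space $E\otimes_{\gA}H_{\pi}$, is self-adjoint. It says nothing about restrictions of $D$ to neighbourhoods of the compact sets $\supp(\chi_n)\subset M$, so the sentence ``feeding these local statements back through the Kaad--Lesch principle \dots forces density of $\im(D\pm i)$'' does not describe any deduction that theorem can perform. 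The reason one cannot instead run the classical Chernoff argument directly on the Hilbert module is that its compactly-supported step relies on weak compactness of bounded sequences (Banach--Alaoglu and Banach--Saks applied to the mollified sequence $DF_nu$), and these tools are unavailable for Hilbert $\gA$-modules; this failure is precisely what the detour through representations is designed to circumvent. The paper's actual route is: apply Kaad--Lesch to reduce to the localizations $D^{\pi}$, identify $D^{\pi}$ with a concrete first-order operator $D_{\pi}$ on an infinite-rank Hilbert bundle $V^{\pi}\to M$ (Lemma \ref{lem:localization-explicit}), and prove a Chernoff theorem for such bundles (Proposition \ref{chernoff-classical}), where the mollifier-plus-weak-compactness argument and then the cutoff argument are legitimate.

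Your alternative ingredients --- geometric cutoffs, extension of the data over a compact piece to a closed manifold, and Mishchenko--Fomenko regularity there --- do assemble into a correct proof, but that is essentially the Hanke--Pape--Schick argument which the paper explicitly contrasts with its own, and it is not the Kaad--Lesch principle. To complete that route you would need to prove (i) that a compactly supported $u\in\dom(D^*)$ lies in $\dom(\overline{D})$, by comparing $D$ near $\supp(u)$ with a formally self-adjoint elliptic extension to a closed manifold and invoking the Sobolev-module regularity of \cite{MF} there, and (ii) the patching step: given $u\in\dom(D^*)$, show $\chi_n u\in\dom(D^*)$ with $\norm{D(\chi_n u)}$ bounded and $\chi_nu\to u$, and then pass to the limit in the graph --- and this last limit step again needs a substitute for weak compactness (e.g.\ completeness of the graph together with a Cauchy estimate), which your proposal does not supply. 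As written, the proof has a hole exactly at the point where the hard work lives.
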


The case $\gA=\bC$ is classical and due to Chernoff \cite{Chernoff} and Wolf \cite{Wolf}; see also \cite[Proposition 10.2.10]{HR}. For arbitrary $\gA$, Hanke, Pape and Schick \cite[Theorem 1.5]{HPS} proved a result which is only slighly less general than Theorem \ref{chernoff-theorem}. 
We give an alternative proof, using a different method. 

\subsubsection*{Localization of Hilbert modules}
For the proof of Theorem \ref{chernoff-theorem}, we use a result by Kaad and Lesch \cite{KL} that characterizes self-adjoint regular operators. 
This involves the \emph{localization} of a densely defined operator $D$ at a $*$-representation $\pi: \gA \to \Lin (H_{\pi})$ of $\gA$ on a Hilbert space (it is not required that $H_{\pi}$ is separable), introduced in \cite[\S 2.4]{KL}. 
The algebraic tensor product $E \odot_{\gA} H_{\pi}$ is a $\gC$-vector space and has the inner product 
\[
\scpr{x \otimes v, y \otimes w}:= \scpr{v, \pi (\scpr{x,y}_{\gA}) w} \in \gC,
\]
which is positive semidefinite by \cite[Prop 4.5]{Lance}
. Define the Hilbert space $E^{\pi}=E \otimes_{\gA} H_{\pi}$ as the completion of $E \odot_{\gA} H_{\pi}$ with respect to that scalar product. For $F \in \Lin_{\gA} (E)$, the induced map $F_{\pi}=F \otimes 1: E \otimes_{\gA} H_{\pi} \to E \otimes_{\gA} H_{\pi}$ is bounded \cite[p. 42]{Lance}. This gives an associated representation $\tilde{\pi}:\Lin_{\gA}(E) \to \Lin (E^{\pi})$ of $\cstar$-algebras.

Let $D: \dom (D) \to E$ be a closed, densely defined and symmetric operator on a Hilbert $\gA$-module. Let $\dom (D^{\pi}_0):= \dom (D) \odot_{\gA} H_{\pi} \subset E^{\pi}$ and define $D^{\pi}_{0}: \dom (D^{\pi}_0)\to E^{\pi}$ by the formula
\[
D_0^{\pi} (x \otimes v) := Dx \otimes v.
\]
By \cite[Lemma 2.5]{KL}, $D_0^{\pi}$ is densely defined and symmetric, and the \emph{localization} $D^{\pi}$ is by definition the closure of $D^{\pi}_0$.

\begin{thm}[Kaad and Lesch, \cite{KL} Theorem 4.2]\label{thm:kaadlesch}
The following conditions on a densely defined symmetric and closed operator $D$ on a Hilbert $\gA$-module $E$ are equivalent.
\begin{enumerate}
\item $D$ is self-adjoint and regular.
\item For each cyclic representation $\pi$ of $\gA$, the localization $D^{\pi}$ is a self-adjoint operator on the Hilbert space $E^{\pi}$.
\end{enumerate}
\end{thm}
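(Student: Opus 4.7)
The plan is to apply the spectral criterion of Proposition \ref{spectral-criterion} on both sides of the equivalence: over $\gA$ it says that $D$ is self-adjoint and regular iff $D \pm i : \dom(D) \to E$ are invertible, and its specialization to $\gA = \bC$ gives that the closed symmetric operator $D^{\pi}$ is self-adjoint iff $D^{\pi} \pm i$ are invertible on $E^{\pi}$. The task thus reduces to proving that $(D+i)$ and $(D-i)$ are bijective on $E$ if and only if for every cyclic representation $\pi$ the operators $(D^{\pi}+i)$ and $(D^{\pi}-i)$ are bijective on $E^{\pi}$.

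For (1) $\Rightarrow$ (2), functoriality of the localization construction immediately provides bounded operators $\tilde{\pi}\bigl((D \pm i)^{-1}\bigr) \in \Lin(E^{\pi})$. One checks on the core $\dom(D) \odot_{\gA} H_{\pi}$ that these are two-sided inverses of $D^{\pi}_0 \pm i$; passing to closures yields the required inverses of $D^{\pi} \pm i$.

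For (2) $\Rightarrow$ (1), the coercivity bound (\ref{coercivity}) together with the graph-norm identity (\ref{estimate-unbounded-op3}) shows that $D \pm i$ is isometric from $(\dom(D), \norm{\_}_D)$ into $E$; since $D$ is closed, its image is a closed $\gA$-submodule of $E$. Surjectivity is the real issue. The naive attempt is to take $y \in E$, pick a state $\phi$ with GNS data $(\pi_\phi, \xi_\phi)$, and use hypothesis (2) to find $w_\phi \in \dom(D^{\pi_\phi})$ with $(D^{\pi_\phi}+i)w_\phi = y \otimes \xi_\phi$; however these local solutions do not obviously descend to a single $u \in \dom(D)$, because $w_\phi$ need not lie in the image of $\dom(D) \odot_{\gA} H_{\pi_\phi}$. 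I would therefore pass to the universal representation $\pi_u := \bigoplus_\phi \pi_\phi$, which is faithful; since every representation decomposes into cyclic summands, $D^{\pi_u}$ is a Hilbert-space direct sum of self-adjoint operators, hence is itself self-adjoint on $E^{\pi_u}$, with bounded inverse $R := (D^{\pi_u} + i)^{-1} \in \Lin(E^{\pi_u})$.

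The main obstacle is then the descent from $R$ to an element $(D+i)^{-1}$ of $\Lin_{\gA}(E)$. The key point is that $D^{\pi_u}$, being obtained by tensoring an $\gA$-linear operator with $H_{\pi_u}$, commutes with the natural right-action of the commutant $\pi_u(\gA)' \subset \Lin(H_{\pi_u})$ on the internal tensor product $E^{\pi_u}$; hence so does $R$. A double-commutant argument then identifies the subalgebra of $\Lin(E^{\pi_u})$ commuting with this action with the image of the faithful embedding $\tilde{\pi}_u : \Lin_{\gA}(E) \hookrightarrow \Lin(E^{\pi_u})$, so that $R = \tilde{\pi}_u(T)$ for a unique $T \in \Lin_{\gA}(E)$; this $T$ is the desired inverse $(D+i)^{-1}$. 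The same argument applied to $D-i$, combined with Proposition \ref{spectral-criterion}, completes the proof.
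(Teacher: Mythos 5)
First, note that the paper does not prove this statement at all: it is quoted verbatim from Kaad--Lesch \cite{KL}, so your proposal is being measured against their argument rather than against anything in this text. Your direction (1) $\Rightarrow$ (2) is fine: localizing the resolvents $(D\pm i)^{-1}$, checking the inverse relations on the algebraic core, and closing up is exactly the standard (and correct) argument, and the reduction of both sides to invertibility of $D\pm i$ resp.\ $D^{\pi}\pm i$ via Proposition \ref{spectral-criterion} is the right frame. The passage to the universal representation and the identification $D^{\pi_u}=\bigoplus_{\phi}D^{\pi_\phi}$ are also unobjectionable.

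The gap is the descent step in (2) $\Rightarrow$ (1). The commutant of the action of $1\otimes\pi_u(\gA)'$ on $E\otimes_{\gA}H_{\pi_u}$ is \emph{not} $\tilde{\pi}_u(\Lin_{\gA}(E))$. A commutant is always weakly closed, whereas $\tilde{\pi}_u(\Lin_{\gA}(E))$ is merely a $\cstar$-subalgebra of $\Lin(E^{\pi_u})$ and is essentially never weakly closed. Already for $E=\gA$ one has $E^{\pi_u}=H_{\pi_u}$, the action of $\pi_u(\gA)'$ is the tautological one, and its commutant is the enveloping von Neumann algebra $\pi_u(\gA)''\cong\gA^{**}$, which is vastly larger than $\Lin_{\gA}(\gA)=\gA$. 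So your double-commutant argument only places $R=(D^{\pi_u}+i)^{-1}$ in a von Neumann-algebraic completion of $\tilde{\pi}_u(\Lin_{\gA}(E))$; it cannot force $R$ to come from an adjointable operator on $E$, and this is precisely where the real content of the theorem (regularity, i.e.\ adjointability of the resolvent) lives. The correct route in \cite{KL} avoids any commutant computation: since $\scpr{u,u}_D=\scpr{(D+i)u,(D+i)u}$ and $D$ is closed, $N:=\operatorname{ran}(D+i)$ is a closed submodule of $E$, and one checks that $\operatorname{ran}(D^{\pi}+i)$ is contained in the closure of $N\odot_{\gA}H_{\pi}$, so hypothesis (2) forces this closure to be all of $E^{\pi}$ for every cyclic $\pi$. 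One then needs the genuinely nontrivial lemma that a closed submodule $N\subset E$ whose localization at every state is dense must equal $E$; Kaad and Lesch prove this by a weak-$*$ compactness/minimax argument over the state space of $\gA$ (exchanging $\sup_{\phi}\inf_{n\in N}$ and $\inf_{n\in N}\sup_{\phi}$ applied to $\phi(\scpr{y-n,y-n})$). If you want to complete your proof, you must replace the double-commutant step by an argument of this kind.
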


Recall that a \emph{cyclic representation} of $\gA$ is a $*$-representation $\pi: \gA \to \Lin (H_{\pi})$ on a Hilbert space such that there exists $h_0 \in H_{\pi}$ so that $\gA \to H_{\pi}$, $a \mapsto \pi(a) h_0$ has dense image.
The route for the proof of Theorem \ref{chernoff-theorem} is now predictable: we have to describe the localization $D^{\pi}$ in a precise way and we have to show that it is self-adjoint (this step is a variation of one of the proofs in the Hilbert space case).

Let $P$ be the fibre of the bundle $V \to M$. There is a $U(P)$-principal bundle $Q \to M$ and an isometry $Q \times_{U(P)} P \cong V$. The representation $\pi$ induces a continuous group homomorphism (even smooth) $U(P) \to U(P^{\pi})$ to the unitary group of the Hilbert space $P^{\pi}$ ($P^{\pi}$ is typically \emph{not} finite-dimensional). We obtain a bundle
\[
V^{\pi} := Q \times_{U(P)} P^{\pi} \to M
\]
with fibre the Hilbert space $P^{\pi}$ and structure group the unitary group of $P^{\pi}$ (with the norm topology). There is a natural $\gC$-valued inner product on $\Gamma_c (M;V^{\pi})$, and the completion is a Hilbert space $L^2 (M;V^{\pi})$ (not surprisingly, $L^2 (M;V^{\pi})$ and $L^2 (M;V)^{\pi}$ are isometric, see Lemma \ref{lem:localization-explicit} below).

There is a localized version of the differential operator $D$. In local coordinates, $D$ is given by the formula $Ds (x)= b (x) s(x) + \sum_{j=1}^d a_j (x) \partial_{x_j} s(x)$, and we set  
\[
D_{\pi}s(x) := \tilde{\pi}(b(x)) s(x) + \sum_{j=1}^d \tilde{\pi}(a_j (x)) \partial_{x_j} s(x).
\]
This is a differential operator on the Hilbert bundle $V^{\pi}$. A priori, it is not clear how $D_{\pi}$ relates to the Hilbert module localization $D^{\pi}$ (therefore, the notational difference). Before we clarify the relation between $D^{\pi}$ and $D_{\pi}$, we prove that $D_{\pi}$ is essentially self-adjoint (in the sense of classical unbounded operator theory).

Using the coordinate expression for $D_{\pi}$, it is easy to see that $D_{\pi}: \Gamma_c (M; V^{\pi}) \to \Gamma_c (M; V^{\pi})$ is formally self-adjoint and hence $D_{\pi}: \Gamma_c (M;V^{\pi})\to L^2 (M;V^{\pi})$ is symmetric. Moreover, if $f: M \to \bR$ is a function, then 
\[
[D_{\pi},f] =  \sum_{j=1}^d \tilde{\pi}(a_j (x)) (\partial_{x_j}f) ,
\]
and since $\tilde{\pi}$ is bounded, this proves that $[D_{\pi},f]$ is bounded once $[D,f]$ is bounded. Therefore, the next result implies that $D_{\pi}$ is essentially self-adjoint.

\begin{prop}\label{chernoff-classical}
Let $E \to M$ be a smooth Hilbert bundle on $M$ (unitary group as structure group) and let $D: \Gamma_c (M;E) \to \Gamma_c (M;E)$ be a formally self-adjoint differential operator of order $1$. Assume that there is a coercive function $f: M \to \bR$ such that $[D,f]$ is a bounded operator. Then the closure $\overline{D}$ of the operator $D: \Gamma_c (M;E) \to L^2 (M;E)$ is self-adjoint.
\end{prop}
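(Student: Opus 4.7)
Since $D$ is symmetric, its closure $\overline{D}$ satisfies $\overline{D} \subset (\overline{D})^* = D^*$, so self-adjointness of $\overline{D}$ is equivalent to the reverse inclusion $\dom (D^*) \subset \dom (\overline{D})$. My plan is to establish this by a two-step approximation: first cut off an arbitrary $u \in \dom (D^*)$ to produce compactly supported elements of $\dom (D^*)$, and then mollify in local trivializations to produce smooth compactly supported approximations. In both steps the convergence must take place not just in $L^2$ but in the graph norm.

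\textbf{Cutoff via the coercive function.} Fix a smooth compactly supported $\chi: \bR \to [0,1]$ with $\chi \equiv 1$ near $0$ and set $\chi_n (x) := \chi (f(x)/n)$. Coercivity of $f$ forces $\chi_n$ to have compact support, and since $D$ has order one the chain rule for commutators gives
\[
[D,\chi_n] = \tfrac{1}{n}\chi'(f/n)\,[D,f],
\]
so $[D,\chi_n]$ is bounded on $L^2 (M;E)$ with norm at most $\norm{\chi'}_{\infty} \norm{[D,f]}/n \to 0$. For $u \in \dom(D^*)$ and $\varphi \in \Gamma_c (M;E)$, writing $\chi_n D \varphi = D(\chi_n \varphi) - [D,\chi_n] \varphi$ shows $\chi_n u \in \dom (D^*)$ with $D^* (\chi_n u) = \chi_n D^* u - [D,\chi_n]^* u$. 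Then $\chi_n u \to u$ and $\chi_n D^* u \to D^* u$ in $L^2$ by dominated convergence, while the error $[D,\chi_n]^* u$ tends to zero in norm by the commutator estimate above. Hence it suffices to show that every compactly supported $v \in \dom(D^*)$ already lies in $\dom (\overline{D})$.

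\textbf{Mollification in local charts.} Choose a finite smooth partition of unity $\{\rho_i\}$ subordinate to a cover of a neighborhood of $\supp (v)$ by trivializing charts $U_i$. Each $\rho_i v$ has compact support inside some $U_i$, and in the local trivialization $E|_{U_i} \cong U_i \times H$ the operator $D$ takes the form $b + \sum_{j=1}^d a_j \partial_{x_j}$ with smooth $\Lin (H)$-valued coefficients. Let $\eta_\eps$ be a standard mollifier on $\bR^d$ and set $(\rho_i v)_\eps := \eta_\eps * (\rho_i v) \in C_c^{\infty} (U_i; H)$. Then $(\rho_i v)_\eps \to \rho_i v$ in $L^2$, and Friedrichs' commutator lemma (applied to $H$-valued functions) yields $D(\rho_i v)_\eps \to D^*(\rho_i v)$ in $L^2 (U_i; H)$. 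Summing over $i$ produces smooth compactly supported $v_\eps \in \Gamma_c(M;E)$ with $v_\eps \to v$ and $Dv_\eps \to D^* v$ in $L^2 (M;E)$, so $v \in \dom (\overline{D})$ as required.

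\textbf{The main obstacle.} The substantive point is the Friedrichs commutator lemma in the vector-valued setting: one must show $\norm{[D, \eta_\eps *\,] w}_{L^2} \to 0$ for compactly supported $w$ with $D w \in L^2$. The classical scalar argument carries over: the commutator $[a_j \partial_j, \eta_\eps *\,]$ is an integral operator whose kernel, computed from the first derivatives of $a_j$ and a rescaled derivative of $\eta$, is uniformly bounded in the relevant $L^1$-norms; Young-type estimates then apply verbatim to Hilbert-space-valued integrands. The only place where the bundle structure really enters is in ensuring that the coefficients $a_j$ are $\Lin (H)$-valued and uniformly bounded on the compact support in question, which is immediate from smoothness. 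Beyond this technical ingredient, the proof is a bookkeeping exercise in combining cutoffs, partitions of unity, and mollifications.
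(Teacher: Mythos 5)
Your proof is correct, and it reaches the conclusion by a genuinely different mechanism than the paper's for the key analytic step. Both arguments reduce the general case to that of a compactly supported element of $\dom (D^*)$ by cutting off with functions of the coercive $f$ whose commutator with $D$ is $O(1/n)$; your $\chi(f/n)$ and the paper's $h_n\circ f$ play identical roles, and the fact that you perform this reduction first rather than last is immaterial. The divergence is in the compactly supported case. The paper works with abstract Friedrichs mollifiers $F_t$ of which it assumes only that $\norm{F_t}$ and $\norm{[D,F_t]}$ are uniformly bounded; this yields merely a uniform bound on $\norm{DF_n u}$, so the paper must invoke the sequential Banach--Alaoglu theorem and the Banach--Saks theorem to extract Ces\`aro means converging in the graph norm. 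You instead mollify concretely in charts and prove that $\eta_\eps * w \to w$ in the graph norm directly, which requires the full Friedrichs commutator lemma $\norm{[D,\eta_\eps * \,]w}\to 0$ for $\Lin(H)$-valued coefficients. You correctly isolate this as the crux, and your justification holds up: the Schur/Young bound for the commutator kernel only uses the operator norms $\norm{a_j(x)-a_j(y)}\leq L|x-y|$ and is insensitive to $\dim H$, and the convergence to zero on the dense subspace of smooth compactly supported $H$-valued sections uses no compactness of the mollifier. This is precisely the failure mode the paper warns about for infinite-rank bundles (the mollifier is no longer a compact operator), and the two proofs evade it differently: the paper demands less of the mollifier and compensates with soft weak-compactness arguments at the end, while you prove a stronger convergence statement for the commutator and thereby avoid subsequences and Ces\`aro means altogether, at the cost of the chart-by-chart verification of the vector-valued Friedrichs lemma.
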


For Hilbert bundles $E$ of \emph{finite rank}, this is the classical Chernoff theorem. Some proofs we found in the literature use arguments which are only valid if $E$ has finite rank (namely, that a Friedrichs mollifier is a compact operator).
The proof of \cite[Proposition 10.2.10]{HR} essentially works in full generality, and we now go through this proof to demonstrate this.

\begin{proof}
Let $\Gamma := \Gamma_c (M;E)$ be the initial domain and $H:= L^2 (M;E)$. 
We have to prove that the domain $\dom (D^*)$ of the adjoint $D^*$ is contained in $\dom (\overline{D})$. So let $u \in H$ be in the domain of $D^*$. By definition, this means that there is $C_0 \geq 0$ with
\[
|\scpr{Dv, u}| \leq C_0 \norm{v}
\]
for all $v \in \Gamma$. Assume that $u$ has compact support to start with. Fix a family $(F_t)_{t\in (0,1]}$ of Friedrichs mollifiers. 
This is a family of bounded self-adjoint operators on $H$ such that $F_t$ and $[D,F_t]$ is uniformly bounded, say $\norm{[D,F_t]} \leq C_1$ and $\norm{F_t} \leq C_2$. Moreover, $\norm{F_t u-u} \to 0$ and $F_t u \in \Gamma$. Let $t_n \to 0$ and $F_n := F_{t_n}$. Then, for all $v \in \Gamma$,
\[
|\scpr{DF_n u, v}| = |\scpr{u,F_n D v} |\leq | \scpr{u, [F_n,D] v}| + |\scpr{u, D F_n v}| \leq C_1\norm{u} \norm{v}  + C_0 C_2\norm{v}=: C \norm{v}.
\]
Since $v$ was arbitrary, $\norm{DF_n u } \leq C$, independent of $n$. Thus the sequence $u_n :=F_n u \in \Gamma$ converges to $u$, and $Du_n$ is uniformly (in $n$) bounded. Hence by the sequential Banach-Alaoglu Theorem \cite[Theorem III.7.3]{Werner}, we can assume that $Du_n$ is weakly convergent, after passing to a subsequence. 
By the Banach-Saks Theorem \cite[Satz V.3.8]{Werner}, there is a further subsequence of $u_n$ such that the sequence of arithmetic means $v_n:=\frac{1}{m}\sum_{n=1}^{m} Du_n$ is convergent in norm. Let $w_n := \frac{1}{m}\sum_{n=1}^{m} u_n$, so that $Dw_n =v_n$ and $w_n \to u$. Therefore, the sequence $(w_n, Dw_n)$ in the graph of $D$ is convergent, which proves that $u=\lim_n w_n \in \dom (\overline{D})$. So far, this argument shows that any compactly supported element $u$ of the domain of $D^*$ lies in $\dom (\overline{D})$.

Now let $u \in \dom (D^*)$ be arbitrary. Let $h_n: \bR \to [0,1]$ be a sequence of smooth functions with $|h_n'(t)| \leq \frac{1}{n}$, $h_n(t)=1$ for $t \leq n$ and $h_n (t) =0$ for $t \geq 3n$. Let $g_n := h_n \circ f$, with the function $f: M \to \bR$ guaranteed by the hypothesis of the Theorem. Then $\norm{g_n u - u} \to 0$, and $\norm{[D,g_n]} \leq \frac{1}{n} \norm{[D,f]} \to 0$. Because $u \in \dom (D^*)$, the estimate
\begin{equation}\label{est-chrnoff-proof1}
|\scpr{Dv, g_n u}| \leq |\scpr{[g_n, D]v,  u}|+|\scpr{D(g_n v), u}| \leq \frac{1}{n} \norm{[D,f]} \norm{u} \norm{v} + C_0 \norm{g_n v}  \leq C \norm{v}
\end{equation}
holds for all $v \in \Gamma$, with $C:=\frac{1}{n} \norm{[D,f]} \norm{u}  + C_0$. This proves $g_n u \in \dom (D^*)$, but $g_n$ has compact support and so by the first part of the proof, $g_n u \in \dom (\overline{D})$. Next, we claim that the sequence $D(g_n u)$ is uniformly bounded. To see this, let $v \in \Gamma$ and use that
\[
|\scpr{D g_n u,v}| = |\scpr{ g_n u, D v}| \leq C \norm{v}
\]
by (\ref{est-chrnoff-proof1}). Again this holds for all $v \in \Gamma$, but $\Gamma$ is dense in $H$ and so $\norm{D g_n u} \leq C$. The same argument as in the first part of the proof then shows that $u \in \dom (\overline{D})$. 
\end{proof}

To finish the proof of Theorem \ref{chernoff-theorem}, we have to relate $D_{\pi}$ and $D^{\pi}$. 

\begin{lem}\label{lem:localization-explicit}
There is a canonical isometry 
\[
\Phi:(L^2 (M;V) )^{\pi}\to L^2 (M;V^{\pi})
\]
of Hilbert spaces. The relation $\Phi \circ (D_0^\pi) = D_{\pi} \circ \Phi$ holds on $\Gamma_c (M;V) \odot_{\gA} H_{\pi}$. 
Provided that $\pi$ is cyclic, $\Phi$ takes the algebraic tensor product $\Gamma_c (M;V) \odot_{\gA} H_{\pi}$ onto a core of $D_{\pi}$.
\end{lem}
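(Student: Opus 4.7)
The plan is to define $\Phi$ first on the algebraic tensor product and verify everything at that level, then extend by completion.

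First I would define $\Phi_0 : \Gamma_c(M;V) \odot_{\gA} H_{\pi} \to \Gamma_c(M; V^{\pi})$ by the formula $\Phi_0(s \otimes v)(x) := s(x) \otimes v$, where the fibre identification $V^{\pi}_x \cong V_x \otimes_{\gA} H_{\pi}$ comes from the very construction $V^{\pi} = Q \times_{U(P)} P^{\pi}$. Balancedness over $\gA$ follows because the associated representation $\tilde\pi$ has $\tilde\pi(a)(p \otimes v) = p \otimes \pi(a)v = (pa) \otimes v$ in the fibrewise tensor product. The isometry property is then a direct computation: for $s, t \in \Gamma_c(M;V)$ and $v,w \in H_\pi$,
\[
\scpr{s\otimes v,\, t \otimes w}_{(L^2(M;V))^{\pi}} = \scpr{v,\, \pi(\scpr{s,t}_{L^2(M;V)}) w}_{H_\pi},
\]
while $\scpr{s,t}_{L^2(M;V)} = \int_M \scpr{s(x),t(x)}_{V_x}\, d\vol(x)$ is a Bochner integral of compactly supported continuous $\gA$-valued functions. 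Since $\pi$ is bounded linear and $\scpr{v, \pi(\cdot) w}$ is bounded linear, both commute with this integral, and the right-hand side becomes $\int_M \scpr{s(x) \otimes v,\, t(x) \otimes w}_{V^{\pi}_x}\, d\vol(x) = \scpr{\Phi_0(s\otimes v),\, \Phi_0(t\otimes w)}_{L^2(M;V^{\pi})}$.

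Next I would show $\Phi_0$ has dense image in $L^2(M;V^{\pi})$, which is enough to extend $\Phi_0$ to an isomorphism of Hilbert spaces $\Phi: (L^2(M;V))^{\pi} \to L^2(M;V^{\pi})$. Using a smooth partition of unity subordinate to trivializing charts for $V$, reduce to the local problem over $U \subset M$ with $V|_U \cong U \times P$; choose a finite generating set $e_1,\ldots,e_n$ of the finitely generated projective $\gA$-module $P$, so that any $\sigma \in \Gamma_c(U; V^{\pi})$ can be written $\sigma(x) = \sum_i e_i \otimes h_i(x)$ with $h_i \in C_c^{\infty}(U; H_{\pi})$. Each $h_i$ is uniformly continuous with compact support and may therefore be approximated in $L^2$ (indeed uniformly) by finite sums $\sum_k \phi_k v_k$ with $\phi_k \in C_c^{\infty}(U;\bC)$ and $v_k \in H_{\pi}$, via a standard partition-of-unity argument on $U$. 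The resulting $\sum_{i,k} (\phi_k e_i) \otimes v_k$ lies in $\Phi_0(\Gamma_c(M;V) \odot_{\gA} H_{\pi})$ and approximates $\sigma$ in $L^2(M; V^{\pi})$.

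The intertwining relation $\Phi \circ D_0^{\pi} = D_{\pi} \circ \Phi$ is then a mechanical computation on elementary tensors $s \otimes v$: in local coordinates, using $\tilde{\pi}(a)(p \otimes v) = (ap) \otimes v$ and the fact that $\partial_{x_j}$ leaves the constant $v$ alone,
\[
D_{\pi}(\Phi_0(s \otimes v))(x) = \tilde\pi(b(x))(s(x) \otimes v) + \sum_j \tilde\pi(a_j(x))(\partial_{x_j} s(x) \otimes v) = (Ds)(x) \otimes v = \Phi_0(D_0^\pi(s \otimes v))(x).
\]

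For the core assertion, Proposition \ref{chernoff-classical} (applied to the Hilbert bundle $V^{\pi}$, whose structure group is the unitary group of $P^{\pi}$, using that $[D_{\pi}, f]$ is bounded whenever $[D,f]$ is) shows that the closure of $D_{\pi}|_{\Gamma_c(M;V^\pi)}$ is self-adjoint, so $\Gamma_c(M;V^{\pi})$ is a core for $D_{\pi}$. It therefore suffices to approximate each $\sigma \in \Gamma_c(M;V^{\pi})$ by elements of $\Phi(\Gamma_c(M;V) \odot_{\gA} H_{\pi})$ in the graph norm. Repeating the partition-of-unity argument of the second paragraph but tracking first derivatives — the $h_i$ and all $\partial_{x_j} h_i$ are uniformly continuous on a common compact set, hence are simultaneously approximated in $C^0$ by sums $\sum_k \phi_k v_k$ and $\sum_k (\partial_{x_j}\phi_k) v_k$ (the cyclicity of $\pi$ allows the vectors $v_k$ to be chosen from the dense subspace $\pi(\gA) h_0 \subset H_\pi$ if desired) — gives $C^1$-approximation with controlled supports, and hence graph-norm convergence because $\norm{\cdot}_D$ is dominated by a multiple of the $C^1$-norm on sections supported in a fixed compact set. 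The main obstacle is precisely this $C^1$-density of algebraic tensors in smooth compactly supported $H_\pi$-valued sections; everything else is formal manipulation of the definitions.
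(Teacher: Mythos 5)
Your construction of $\Phi$, the isometry computation (commuting the Bochner integral past the bounded maps $\pi$ and $\scpr{v,\cdot\,w}$), the intertwining relation, and the reduction of the core statement to $C^1$-approximation of compactly supported smooth $H_{\pi}$-valued functions by finite sums $\sum_k \phi_k v_k$ all match the paper's proof and are fine; so is the observation that graph-norm convergence follows from $C^1$-convergence with supports in a fixed compact set.

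The gap is exactly in the step you flag as the main obstacle. The standard partition-of-unity approximant (with $v_k = h(x_k)$) gives $\norm{h-\sum_k\phi_k v_k}_{C^0}\leq \omega_h(\delta)$, but its derivative does \emph{not} approximate $\partial_j h$: since $\sum_k \partial_j\phi_k = 0$ where the partition sums to $1$, one has $\sum_k(\partial_j\phi_k)v_k=\sum_k(\partial_j\phi_k)\bigl(h(x_k)-h(x)\bigr)$, which is only bounded by $\omega_h(\delta)\sum_k|\partial_j\phi_k(x)|\sim \omega_h(\delta)/\delta$; for smooth $h$ this is $O(1)$, not $o(1)$, and already for $h(x)=x$ in one variable the derivative of the approximant oscillates and fails to converge to $h'$. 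Uniform continuity of the $\partial_j h_i$ does not help, because the function $\sum_k\phi_k\,\partial_jh(x_k)$ that it lets you build is not the derivative of $\sum_k\phi_k h(x_k)$. Two standard repairs: (a) replace $v_k$ by the first-order Taylor polynomial $h(x_k)+\sum_j(x_j-(x_k)_j)\partial_jh(x_k)$ — the result is still a finite sum of scalar functions times fixed vectors and now converges in $C^1$ because the error $h(x)-T_k(x)$ is $o(\delta)$ on $\supp\phi_k$; or (b) follow the paper: the compact set $u(\bR^d)\cup\bigcup_j\partial_ju(\bR^d)$ lies in a separable subspace, an increasing sequence of finite-rank projections $p_n\to 1$ converges uniformly on it by Arzel\`a--Ascoli, and since $p_n$ is a constant operator it commutes with $\partial_j$, so $p_nu\to u$ in $C^1$; this reduces everything to a finite-dimensional target, where the approximation is immediate. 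With either fix your argument goes through; your parenthetical remark that cyclicity is not strictly needed for the approximation itself (only to hit vectors of the form $\pi(a)h_0$ if one insists on the paper's normal form) is correct.
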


\begin{proof}
Define
\[
\Phi: \Gamma_c (M;V) \odot_{\gA} H_{\pi} \to L^2 (M; V^{\pi})
\]
by $s \otimes_{\gA} h \mapsto (x \mapsto s(x) \otimes_{\gA} h \in V_x^{\pi})$. It is clear that $\Phi (\Gamma_c (M;V) \odot_{\gA} H_{\pi}) \subset \Gamma_c (M; V^{\pi})$, and it is also clear that $\Phi$ intertwines $D_{\pi}$ and $D_0^\pi$ (look at a local trivialization). Next, we claim that $\Phi$ preserves the inner product. To see this, compute on elementary tensors
\[
\scpr{\Phi(s \otimes h), \Phi(s' \otimes h')}_{L^2 (M;V^{\pi})} = \int_M \scpr{\Phi (s\otimes h)(x), \Phi (s'\otimes h')(x)}_{V_x^{\pi}} d\vol (x)= 
\]
\[
\int_M \scpr{s(x) \otimes h,s'(x) \otimes h' }_{V_x^{\pi}}d \vol (x)= \int_M \scpr{ h,\pi(\scpr{s(x),s'(x)}_{V_x} ) h' }_{H_{\pi}} d\vol (x)
\]
and 
\[
\scpr{s \otimes h, s' \otimes h'}_{L^2 (M;V)^{\pi}} = \scpr{h, \pi (\scpr{s,s'}_{\gA}) h'}_{H_{\pi}} = \scpr{h, \pi (\int_M \scpr{s(x),s'(x)}_{\gA} d\vol (x) ) h'}_{H_{\pi}}.
\]
Since the integral commutes with bounded linear maps, both inner products are equal. 
So $\Phi$ preserves the inner product, and completes to an isometry (we have not yet seen that it is surjective). For the last statement of the Lemma, we have to prove that each $u \in \dom (D_{\pi})$ is the limit (in the graph norm) of a sequence of elements of $\Phi (\Gamma_c (M;V) \odot_{\gA} H_{\pi})$. If this is done, $\Phi$ has dense range and is therefore a surjective isometry.

The initial domain of $D_{\pi}$ is $\Gamma_c (M;V^{\pi})$, and it is enough to prove that each $u \in \Gamma_c (M;V^{\pi})$ can be approximated in the graph norm by elements of $\Phi (\Gamma_c (M;V) \odot_{\gA} H_{\pi})$. Since the support of $u$ has finite measure, it is enough to approximate $u$ in the $C^1$-norm.
By an argument with a partition of unity, it is enough to do this when $u$ is supported in a coordinate patch, where $V$ is trivial. So without loss of generality $M=\bR^d$ and $V= \bR^d \times P$. But $P$ is finitely generated projective, hence a direct summand of $\gA^n$, and so $P^{\pi}$ is a direct summand of $H_{\pi}^n$. Using the projection maps, we further reduce the problem to the case when $P=\gA^n$ and finally $P=\gA$. 
In that case $P^{\pi}=H_{\pi}$, and $\Phi$ is the map $C^{\infty}_c (\bR^d; \gA) \odot_{\gA} H_{\pi} \to L^2 (\bR^d; H_{\pi})$, $\Phi (s \otimes h)= (x \mapsto \pi (s(x)) h)$.

After all these reductions, we face the following problem. Assume that $u:\bR^d \to H_{\pi}$ is a smooth function with compact support. Then there exists a sequence $u_n$ of functions in the image of $\Phi$ which converges uniformly in the $C^1$-norm to $u$. 

The image $K:=u(\bR^d) \cup \partial_1 u (\bR^d) \cup \ldots \cup \partial_d u(\bR^d)$ is a compact subset of $H_{\pi}$ and therefore, it is contained in a separable closed subspace $H_K \subset H_{\pi}$. Pick an increasing sequence of finite rank projections $p_n$ on $H_K$ which tends strongly to the identity. By the Arzel\`a-Ascoli theorem, $p_n|_K$ is uniformly convergent.
Hence $p_n \circ u$ converges to $u$ in the $C^1$-norm. It is now enough to approximate each $p_n u$, and we have reduced the problem to a function $u$ with values in a fixed finite-dimensional subspace $H_0 \subset H_{\pi}$. Let $(v_1, \ldots, v_r)$ be an orthonormal basis of $H_0$ and let $\eps>0$. Choose a cyclic vector $h_0 \in H_\pi$ and pick $a_1, \ldots, a_r \in \gA$ with $\norm{\pi (a_i)h_0 - v_i} \leq \frac{\eps}{\sqrt{r}}$ (it is at this place where we use that $\pi$ is cyclic). Let $\sigma: H_0 \to \gA$ be the linear map given by $\sigma (v_i):= a_i$. The operator norm of the linear map $H_0\to H_{\pi}$, $v \mapsto v- \pi(\sigma (v)) h_0$ is at most $\epsilon$. Define $s: \bR^d \to \gA$ by $s(x):=\sigma (u(x))$. It follows that $\norm{u- \Phi(s\otimes h_0)}_{C^1} \leq (d+1)\epsilon$, and hence we can approximate $u$ by elements of $\im (\Phi)$.
\end{proof}

\begin{proof}[Proof of Theorem \ref{chernoff-theorem}]
By Theorem \ref{thm:kaadlesch}, we have to show that the Hilbert module localizations $D^{\pi}$ are self-adjoint, for each cyclic representation.
By construction, $\Gamma_c (M;V)\subset \dom (D)$ is a core for $D$. By \cite[Theorem 3.3]{KL}, the algebraic tensor product $\Gamma_c (M;V) \odot_{\gA} H_{\pi}$ is a core for the localization $D^{\pi}$. By Lemma \ref{lem:localization-explicit}, this core goes to a core for $D_{\pi}$ under the isometry $\Phi$. Therefore, the closure of $D^{\pi}_0: \Gamma_c (M;V) \odot_{\gA} H_{\pi} \to L^2 (M; V)^{\pi}$ corresponds to the closure of $D_{\pi} : \Gamma_c (M;V^{\pi}) \to L^2 (M; V^{\pi})$ under $\Phi$. By Proposition \ref{chernoff-classical}, the closure of $D_{\pi}$ is self-adjoint, and hence so is the closure of $D^{\pi}_0$, which is nothing else than $D^{\pi}$.
\end{proof}

\subsection{Functional calculus for unbounded operators}\label{subsec:FunctCalUnbounded}

Next, we recall the functional calculus of unbounded self-adjoint operators on Hilbert modules. Parts of these result can be found in \cite{Lance} and some are due to Kucerovsky \cite{Kuc}. Let $E$ be a Hilbert-$\gA$-module and let $D: \dom (D) \to E$ be a self-adjoint and regular operator. The \emph{spectrum} of $D$ is the closed subset $\spec (D)\subset \bC$ of all $\lambda\in \bC$ such that $(D-\lambda): \dom (D) \to E$ is not invertible. Proposition \ref{spectral-criterion} shows that $\spec (D) \subset \bR$. 
Let $\gC (\overline{\bR})$ be the $\cstar$-algebra of continuous functions $f:\bR \to \gC$ such that the limits $\lim_{t \to \pm \infty} f(t) \in \gC$ exist (this can be considered as the $\cstar$-algebra of all continuous functions on the extended real line $\overline{\bR}=[-\infty,\infty]$). It contains $\gC_0 (\bR)$ as an ideal of codimension $2$.

\begin{thm}\label{thm:kucerovsky}
Let $D$ be self-adjoint regular operator on a Hilbert $\gA$-module $E$. Then there is a unital $*$-homomorphism
\[
\Phi=\Phi_D: \gC(\overline{\bR}) \to \Lin_{\gA} (E)
\]
with the following properties:
\begin{enumerate}
\item $\Phi_D (\frac{1}{t\pm i}) = (D\pm i)^{-1}$,
\item $\norm{\Phi_D(f)} \leq \norm{f}_{C^0}$. 
\item Let $f,f_n \in \gC(\overline{\bR})$. Assume that $\norm{f_n} \leq C$ for all $n$ and that $f_n \to f$ uniformly on compact subsets of $\bR$. Then 
\[
\Phi_D(f_n)u \to \Phi_D(f)u
\]
for all $u \in E$.
\item If $f \in \gC_0(\bR)$ is such that $g(t)=tf(t) \in \gC(\overline{\bR})$, then $\Phi_D (f)$ maps $E$ into $\dom (D)$, and we have $D\Phi_D(f)=\Phi_D(f) D=\Phi_D(g)$.
\item Let $\overline{\spec (D)} \subset \overline{\bR}$ be the closure of the spectrum in $\overline{\bR}$. For $f \in \gC(\overline{\bR})$, one has 
\[
\spec_{\Lin_{\gA}(E)} (\Phi_D (f)) = f(\overline{\spec(D)}).
\]
\item $\norm{\Phi_D(f)} \leq \norm{f}_{\spec (D)} := \sup_{t \in \spec (D)} |f(t)|$ (in particular, if $f|_{\spec(D)}=0$, then $\Phi_D (f)=0$ and so $\Phi_D (f)$ is well-defined once $f$ is a function only defined on $\spec(D)$). 
\item If $D$ is Real and $f$ real-valued, then $f(D)$ is Real. Let $\iota$ be a grading on $E$, and $D\iota + \iota D=0$. Then for even $f$, $f(D)\iota =\iota f(D)$, and for odd $f$, $f(D) \iota=-\iota f(D)$.
\end{enumerate}
\end{thm}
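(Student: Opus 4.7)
The plan is to reduce everything to the continuous functional calculus for a single bounded self-adjoint adjointable operator, namely the bounded transform $F := D(1+D^2)^{-1/2}$, and then transport along a homeomorphism $\overline{\bR} \cong [-1,1]$.

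\textbf{Resolvents and the operator $K$.} By Proposition \ref{spectral-criterion}, for each $\lambda \in \bC \setminus \bR$ the operator $D-\lambda$ is invertible, and I set $R_\lambda := (D-\lambda)^{-1}$. The coercivity estimate (\ref{coercivity}) yields $\norm{R_\lambda} \leq 1/|\Im \lambda|$, and the identity $\scpr{(D-\bar\lambda)u,v} = \scpr{u,(D-\lambda)v}$ for $u,v \in \dom(D)$ gives $R_\lambda \in \Lin_{\gA}(E)$ with $R_\lambda^* = R_{\bar\lambda}$. The usual resolvent identity $R_\lambda - R_\mu = (\lambda-\mu)R_\lambda R_\mu$ follows by direct manipulation on $\dom(D)$; in particular $R_i$ and $R_{-i}$ commute. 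Set $K := R_i R_{-i}$. Then $K$ is self-adjoint, $\scpr{Ku,u} = \scpr{R_{-i}u, R_{-i}u} \geq 0$, so $K$ is positive with $\norm{K} \leq 1$; moreover $(1+D^2)K = \eins$ on $E$, so $K$ is the bounded inverse of $1+D^2$ and takes values in $\dom(D^2)$.

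\textbf{The bounded transform.} Apply the continuous functional calculus for bounded self-adjoint adjointable operators on Hilbert modules (a standard Gelfand-type fact on the commutative $\cstar$-subalgebra $\cstar(\eins,K) \subset \Lin_{\gA}(E)$, as in \cite{Lance}) to define the positive square root $G := K^{1/2}$. From $(D-i)R_i = \eins$ one reads off $DR_{\pm i} = \eins \pm i R_{\pm i}$, so that $DK = R_{-i} + iK \in \Lin_{\gA}(E)$; together with the functional calculus of $K$ this shows $G(E) \subset \dom(D)$, and that $F := DG$ is a bounded self-adjoint adjointable operator satisfying $F^2 = \eins - K$, whence $\spec(F) \subset [-1,1]$.

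\textbf{Construction of $\Phi_D$ and verification of the properties.} The map $\phi: \overline{\bR} \to [-1,1]$, $\phi(t) := t/\sqrt{1+t^2}$, is a homeomorphism and induces an isomorphism $\phi^*: \gC([-1,1]) \xrightarrow{\cong} \gC(\overline{\bR})$. I define
\[
\Phi_D(f) := (f \circ \phi^{-1})(F), \qquad f \in \gC(\overline{\bR}),
\]
where the right-hand side uses the standard continuous functional calculus of the bounded self-adjoint operator $F$. This is a unital $*$-homomorphism with $\norm{\Phi_D(f)} \leq \norm{f}_{\gC^0}$, yielding (2). Property (1) is a short computation: under $\phi^{-1}(s) = s/\sqrt{1-s^2}$ the functions $s \mapsto 1/(\phi^{-1}(s) \pm i)$ match $R_{\pm i}$ expressed through $F$ and $K$. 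Property (5) reduces to the identification $\spec(F) = \phi(\overline{\spec(D)})$, which is obtained from Proposition \ref{spectral-criterion} extended to real $\lambda$ by perturbation; property (6) follows since $\Phi_D(f)$ is normal, so its norm equals its spectral radius, which by (5) is $\sup_{t \in \spec(D)}|f(t)|$. Property (3) is inherited from the corresponding continuity property of functional calculus for $F$, noting that uniform convergence on compacts of $\bR$ together with a uniform bound corresponds to bounded pointwise convergence on $[-1,1]$.

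\textbf{Property (4), equivariance, and main obstacle.} For (4), the relation holds on the generators: $\Phi_D(1/(t\pm i)) = R_{\pm i}$ maps $E$ into $\dom(D)$ and $DR_{\pm i} = \Phi_D(t \cdot 1/(t\pm i))$ by the computation above; the general case follows by approximating $f$ and $tf$ uniformly on compacts by rational functions in $1/(t\pm i)$ and using the closedness of $D$. Property (7) is immediate: if $D$ is Real then $\overline{R_\lambda u} = R_{\bar\lambda}\overline{u}$, hence $K$, $G$ and $F$ are Real, and for real-valued $f$ the image $\Phi_D(f)$ is Real; similarly if $D\iota + \iota D = 0$ then $R_{\pm i}\iota = -\iota R_{\mp i}$, whence $K\iota = \iota K$ and $F\iota = -\iota F$, so the parity of $f$ controls the commutation with $\iota$. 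The main technical obstacle is the construction of $F$ in Step 2, namely verifying that $DG$ extends to a bounded adjointable self-adjoint operator; once this is secured, the remaining steps are transport of well-understood bounded functional calculus along $\phi$.
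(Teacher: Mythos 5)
Your overall route is genuinely different from the paper's: the paper outsources the construction of $\Phi_D$ and properties (1), (3), (4) to the literature (\cite[Theorem 10.9, Proposition 10.4]{Lance}, \cite[Lemma 13]{Kuc}) and only argues (5)--(6) from scratch, via the Gelfand quotient $\gC(\overline{\bR})/\ker\Phi_D\cong\gC(S)$ and the equivalence between $\lambda\notin\spec(D)$ and invertibility of $\Phi_D(\tfrac{t-\lambda}{t+i})$. Your plan of building everything from the bounded transform and transporting along $\overline{\bR}\cong[-1,1]$ is viable in principle (it is essentially how the cited Lance theorem is proved), but as written it has two genuine gaps.

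First, the step you flag yourself --- that $G(E)\subset\dom(D)$ and that $F=DG$ is a bounded self-adjoint adjointable operator with $F^2=1-K$ --- is not a routine consequence of ``the functional calculus of $K$''; it \emph{is} the substance of the result the paper cites. The naive argument (approximate $K^{1/2}$ uniformly by polynomials $p_n$ with $p_n(0)=0$ and write $Dp_n(K)=(DK)\,q_n(K)$ with $p_n(x)=xq_n(x)$) fails because $q_n$ tends to the unbounded function $x^{-1/2}$; the actual proof identifies $K^{1/2}$ as an isometry of $E$ onto $\dom(D)$ with the graph inner product, which rests on the graph decomposition of $E\oplus E$. Since you neither prove nor cite this, the foundation of the whole construction is missing.

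Second, and more seriously as a matter of correctness, your justification of (3) invokes a false principle. Bounded pointwise convergence $g_n\to g$ on $[-1,1]$ does \emph{not} imply $g_n(F)u\to g(F)u$ for a bounded self-adjoint operator on a Hilbert module: take $\gA=E=\gC([0,1])$, $F$ the multiplication by $t$, and $g_n$ a uniformly bounded sequence tending to $0$ pointwise but not uniformly; then $\norm{g_n(F)1}=\norm{g_n}_{\infty}\not\to 0$. Dominated convergence against spectral measures is a Hilbert-space luxury unavailable here. What saves the statement is that the hypothesis gives uniform convergence away from $\pm1$ and that $\pm1$ are ``thin'' for $F$ because $(1-F^2)^{1/2}=K^{1/2}$ is injective with dense range: one splits off a bump $\chi$ near $\pm1$, controls $\norm{(g_n-g)(1-\chi)(F)}$ by uniform convergence, and shows $\chi(F)u\to0$ as $\supp\chi$ shrinks to $\{\pm1\}$ by first taking $u$ in the range of $K^{1/2}$. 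This is exactly the content of \cite[Lemma 13]{Kuc}. A smaller instance of the same issue appears at the endpoints in (5): spectral mapping gives $\spec(F)\cap(-1,1)=\phi(\spec(D))$, but to rule out an extraneous isolated point $\pm1$ of $\spec(F)$ you again need injectivity of $K=1-F^2$ (such a point would yield a nonzero spectral projection annihilated by $K$); ``extended to real $\lambda$ by perturbation'' is not an argument for this.
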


\begin{proof}
The existence of a $*$-homomorphism $\Phi_D$ with the property (1) is proven in \cite[Theorem 10.9]{Lance}. Part (2) is a general property of $*$-homomorphisms. Part (3) is a special case of \cite[Lemma 13]{Kuc}; for an alternative proof see also \cite[Lemma 9.9]{Stolz2}. Part (4) follows from \cite[Proposition 10.4]{Lance}. For part (5), let $\gI:= \ker (\Phi_D) $, an ideal in $\gC(\overline{\bR})$. There is a closed set $S \subset \overline{\bR}$ with $\gI=\{f \vert f|_S=0\}$, and $\gC(\overline{\bR}) /\gI=\gC (S)$. Then $\Phi_D$ factors as $\gC(\overline{\bR}) \stackrel{\rho}{\to} \gC(S) \stackrel{\Phi'}{\to} \Lin_{\gA}(E)$, $\rho$ is the restriction map and $\Phi'$ is injective. By \cite[Theorem VIII.4.8]{Conway} and its proof, $\Phi'$ is an isometry and for each $f \in \gC(\overline{\bR})$, we have $f(S)=\spec_{\gC(S)} (\rho(f)) = \spec_{\Lin_{\gA}(E)} (\Phi_D(f))$. Once we know that $S=\overline{\spec (D)}$, claims (5) and (6) follow immediately. To prove this, let $\lambda \in \bR$ and observe
\[
\begin{split}
\lambda \not \in \spec (D)\, \Leftrightarrow\\
(D-\lambda)(D+i)^{-1} = \Phi_D (\frac{t-\lambda}{t+i})= \Phi' (\rho (\frac{t-\lambda}{t+i})) \text{ is invertible in }\Lin_{\gA}(E) \,\Leftrightarrow\\
0 \not \in \spec_{\Lin_{\gA}(E)} (\Phi' (\rho (\frac{t-\lambda}{t+i})))\, \Leftrightarrow\\
0 \not \in \spec_{\gC(S)}  (\rho (\frac{t-\lambda}{t+i}))\,\Leftrightarrow\\
\lambda \not \in S.
\end{split}
\]
Part (7) is clear. 
\end{proof}

From now on, we shall write $f(D):= \Phi_D(f)$, and a symbol like $\frac{1}{(1+D^2)^{1/2}}$ denotes the operator $\Phi_D (f)$, $f(x):= \frac{1}{(1+x^2)^{1/2}}$. So far, the tools we introduced are geared to the study of differential operators of order $1$. Sometimes, we need to consider operators of order $2$, such as squares $D^2$, but also operators of the form $D^2+A$, for some order $0$ operator $A$, are relevant for us. 

\begin{prop}\label{prop:square-of-selfadjoint}
Let $D$ be a self-adjoint and regular operator on a Hilbert $\gA$-module $H$. The operator $D^2$ with domain $\dom (D^2) := \{ u \in \dom (D) \vert D u \in \dom (D)\}$ is densely defined, closed, self-adjoint and regular, and $\spec (D^2) \subset [0,\infty)$. 
\end{prop}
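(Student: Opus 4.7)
The plan is to construct the inverse of $1+D^2$ directly by functional calculus and deduce all assertions from its existence and boundedness. Set $R := \Phi_D\bigl(\frac{1}{1+t^2}\bigr) \in \Lin_{\gA}(H)$, and observe that the three functions $\frac{1}{1+t^2}, \frac{t}{1+t^2} \in \gC_0(\bR)$ and $\frac{t^2}{1+t^2} \in \gC(\overline{\bR})$ are in the range of the functional calculus. Applying Theorem \ref{thm:kucerovsky}(4) twice will show that $R$ maps $H$ into $\dom(D)$, that $DR = \Phi_D\bigl(\frac{t}{1+t^2}\bigr)$ again maps $H$ into $\dom(D)$, so that $\im(R) \subset \dom(D^2)$, and that $(1+D^2)R = \Phi_D(1) = 1$ as an identity on $H$.

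Symmetry of $D^2$ on $\dom(D^2)$ then follows from applying the symmetry of $D$ twice. To obtain injectivity of $1+D^2$ on $\dom(D^2)$, I would pair $(1+D^2)u = 0$ with $u$ to get $\scpr{u,u} + \scpr{Du,Du} = 0$; since both summands are positive elements of $\gA$, each vanishes and hence $u = 0$. Combined with $(1+D^2)R = 1$, this forces $R(1+D^2) = 1$ on $\dom(D^2)$, so $1+D^2 : \dom(D^2) \to H$ is a bijection with bounded two-sided inverse $R$, from which closedness of $D^2$ is immediate. For density of $\dom(D^2)$, I would use the functions $g_n(t) := \frac{n^2}{n^2+t^2}$, which are uniformly bounded and converge to $1$ uniformly on compact subsets of $\bR$; Theorem \ref{thm:kucerovsky}(3) then gives $\Phi_D(g_n)x \to x$ for each $x \in H$, while the factorisation $\Phi_D(g_n) = R \cdot \Phi_D\bigl(\frac{n^2(1+t^2)}{n^2+t^2}\bigr)$ places each $\Phi_D(g_n)x$ in $\im(R) \subset \dom(D^2)$.

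For regular self-adjointness together with $\spec(D^2) \subset [0,\infty)$, I would repeat the argument above with $1$ replaced by $-\lambda$ for any $\lambda \in \bC \setminus [0,\infty)$. The functions $\frac{1}{t^2-\lambda}$, $\frac{t}{t^2-\lambda}$, $\frac{t^2}{t^2-\lambda}$ all satisfy the same hypotheses on $\gC(\overline{\bR})$ and $\gC_0(\bR)$ because $t^2 - \lambda$ never vanishes on $\bR$; hence $\Phi_D\bigl(\frac{1}{t^2-\lambda}\bigr)$ is a bounded two-sided inverse of $(D^2 - \lambda) : \dom(D^2) \to H$. In particular $(D^2 \pm i)$ are both invertible, so Proposition \ref{spectral-criterion} delivers regular self-adjointness, and since every $\lambda \notin [0,\infty)$ turns out to be resolvent, $\spec(D^2) \subset [0,\infty)$ is automatic. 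The only real subtlety I expect is the bookkeeping in the first paragraph: carefully verifying that $\im(R)$ actually lies in $\dom(D^2)$ rather than merely in $\dom(D)$ by two successive applications of Theorem \ref{thm:kucerovsky}(4), and dually that the functional-calculus product equals the composition of unbounded operators on the relevant subdomain.
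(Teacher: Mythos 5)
Your argument is correct, and it takes a genuinely different route from the paper's. You build everything out of the functional calculus: the two-fold application of Theorem \ref{thm:kucerovsky}(4) to $\frac{1}{1+t^2}$ and $\frac{t}{1+t^2}$ does give $\im(R)\subset\dom(D^2)$ and $(1+D^2)R=\Phi_D(1)=1$, the positivity argument $\scpr{u,u}+\scpr{Du,Du}=0\Rightarrow u=0$ gives injectivity (for non-real $\lambda$ one instead notes that $\lambda\scpr{u,u}=\scpr{Du,Du}$ forces $\scpr{u,u}=0$ since the right-hand side is self-adjoint, or one simply verifies $\Phi_D(\frac{1}{t^2-\lambda})(D^2-\lambda)u=u$ directly using $D\Phi_D(f)=\Phi_D(f)D$ from part (4)), and the approximate identity $g_n(t)=\frac{n^2}{n^2+t^2}$ together with part (3) gives density of $\dom(D^2)$. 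The paper instead works with the map $(D-i):\dom(D^2)\to\dom(D)$: it gets density from the fact that $(D-i)^{-1}$ has dense range and carries the dense submodule $\dom(D)$ into $\dom(D^2)$, gets closedness from an explicit comparison of the graph norms $\norm{(D-i)u}_D$ and $\norm{u}_{D^2}$ showing that $(D-i)$ is a topological isomorphism between the graph-norm completions, and gets the resolvent of $D^2$ at $\lambda=\mu^2$ from the algebraic factorization $(D^2-\mu^2)=(D-\mu)(D+\mu)$ before invoking Proposition \ref{spectral-criterion}. Your approach buys uniformity -- one resolvent formula $\Phi_D\bigl(\frac{1}{t^2-\lambda}\bigr)$ settles closedness, surjectivity and the spectral inclusion simultaneously -- at the cost of the domain bookkeeping you rightly flag as the delicate point; the paper's factorization $(D-\mu)(D+\mu)$ disposes of the spectral statement in one line but needs the separate graph-norm estimate for closedness. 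Both proofs ultimately rest on Theorem \ref{thm:kucerovsky} and conclude via Proposition \ref{spectral-criterion}, so the difference is in the middle, not at the ends.
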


\begin{proof}
By Theorem \ref{thm:kucerovsky}, $(D- i)^{-1}(H)\subset \dom (D)$. Therefore, if $u \in \dom (D)$, then $(D-i)^{-1}u \in \dom (D)$ and $D(D-i)^{-1} u= (D-i)^{-1} Du \in \dom (D)$. Therefore $(D-i)^{-1} (\dom (D)) \subset \dom (D^2)$. Since the bounded operator $(D- i)^{-1}: H \to \dom (D) \subset H$ has dense range and hence carries the dense submodule $\dom (D)$ to a dense submodule of $H$, it follows that $\dom (D^2)$ is dense. Moreover $(D-i): \dom (D^2) \to \dom (D)$ is bijective. 

For $u \in \dom (D^2)$, we compute the graph scalar product, using the symmetry of $D$, as
\[
\scpr{(D-i)u,(D-i)u}_D = \scpr{(D-i)u,(D-i)u} + \scpr{D(D-i)u,D(D-i)u} = 2\scpr{Du, Du} +\scpr{u,u}+ \scpr{D^2 u,D^2 u}.
\]
It follows that 
\[
\scpr{(D-i)u,(D-i)u}_D \geq \scpr{u,u}_{D^2} \Rightarrow \norm{(D-i)u}_D \geq \norm{u}_{D^2}.
\]
Moreover
\[
\norm{(D-i)u}_D^2 \leq \norm{u}_{D^2}^2 + 2 \norm{\scpr{Du,Du}} = \norm{u}_{D^2}^2 + 2 \norm{\scpr{D^2 u,u}} \leq \norm{u}_{D^2}^2 + 2 \norm{u} \norm{D^2 u}.
\]
But
\[
\norm{u} \norm{D^2 u} \leq \norm{u}_{D^2}^2,
\]
so that $\norm{(D-i)u}_D^2 \leq 3 \norm{u}_{D^2}^2$.
Therefore, the bijection $(D-i):\dom (D^2)\to \dom (D)$ is a topological isomorphism onto its image. Hence $D^2$ is closed iff $D$ is.

If $\lambda \in \bC \setminus [0, \infty)$, pick $\mu\in \bC \setminus \bR$ with $\mu^2 =\lambda$. Then 
\[
(D^2-\mu^2) = (D-\mu)(D+\mu).
\]
Since $(D\pm \mu)$ are both invertible, so is $D^2-\mu^2$. To finish the proof, apply Proposition \ref{spectral-criterion}. 
\end{proof}

There is a useful integral formula for the operator $\frac{1}{(1+D)^{1/2}}$, when $D$ is self-adjoint. We state in a slightly more general form. If $D$ is self-adjoint, then the operator $L:=1+D^2$ is self-adjoint and \emph{strictly positive}, in other words, there is $c>0$ with $\scpr{Lu,u} \geq c^2\scpr{u,u}$ for all $u$ (here of course $c=1$). 

\begin{prop}\label{prop:positive-operator-of-order-two}
Let $L$ be a self-adjoint regular operator on a Hilbert $\gA$-module $E$ and assume that $L$ is strictly positive, i.e. $\scpr{Lu,u} \geq c \scpr{u,u}$ for some $c>0$. Then $L+t$ is invertible for all $t \geq 0$, we have $\norm{(L+t)^{-1} } \leq \frac{1}{t+c}$, $L^{-1}$ is a positive element of $\Lin_{\gA} (E)$ and there is the absolutely convergent integral representation
\[
\frac{1}{\sqrt{L}} = \frac{2}{\pi} \int_0^{\infty} \frac{1}{L+t^2} dt. 
\]
\end{prop}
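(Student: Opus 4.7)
The plan is to isolate the spectral inclusion $\spec(L)\subset[c,\infty)$ as the only nontrivial step; once this is established, invertibility of $L+t$, the norm bound, positivity of $L^{-1}$, and the integral formula all fall out of the functional calculus of Theorem \ref{thm:kucerovsky} and the analogous scalar identity.

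For the spectral inclusion, note first that self-adjoint regularity of $L$ together with Proposition \ref{spectral-criterion} gives $\spec(L)\subset\bR$. To upgrade this to $[c,\infty)$, I would argue by contradiction. Suppose some $\lambda\in\overline{\spec(L)}$ satisfies $\lambda<c$, and pick a bump $\phi\in\gC_c((-\infty,c))$ with $0\leq\phi\leq 1$, $\phi(\lambda)=1$, concentrated in a small neighborhood of $\lambda$ so that $(x-c)\phi(x)^2\leq -\tfrac{1}{2}(c-\lambda)\phi(x)^2$ pointwise. For $u\in E$, part (4) of Theorem \ref{thm:kucerovsky} (with $f=\phi\in\gC_c(\bR)$ and $xf\in\gC_c(\bR)$) places $v:=\Phi_L(\phi)u$ in $\dom(L)$ with $Lv=\Phi_L(x\phi)u$, so
\[
\scpr{(L-c)v,v}=\scpr{\Phi_L((x-c)\phi^2)u,u}.
\]
Strict positivity makes the left side $\geq 0$, while order-preservation by the $*$-homomorphism $\Phi_L$ applied to the pointwise bound above makes the right side $\leq -\tfrac{1}{2}(c-\lambda)\norm{\Phi_L(\phi)u}^2$. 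Hence $\Phi_L(\phi)u=0$ for every $u\in E$, so $\Phi_L(\phi)=0$; but parts (5)--(6) of Theorem \ref{thm:kucerovsky} then force $\phi|_{\overline{\spec(L)}}=0$, contradicting $\phi(\lambda)=1$. I expect this to be the main obstacle, since it is the only step requiring genuine input beyond the standard calculus.

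With $\spec(L)\subset[c,\infty)$ in hand, the first three claims are mechanical. The function $g_t(x):=1/(x+t)$ is continuous and bounded on $\spec(L)$ (for $t\geq 0$, since $-t\leq 0<c$ avoids the pole), so $\Phi_L(g_t)$ is well-defined by part (6). Using $xg_t(x)=1-tg_t(x)$ with a cut-off version of $g_t$ in $\gC_0(\bR)$ and part (4) of Theorem \ref{thm:kucerovsky} gives $(L+t)\Phi_L(g_t)=1=\Phi_L(g_t)(L+t)$, whence $L+t$ is invertible with inverse $\Phi_L(g_t)$, and part (6) provides the norm bound $\norm{(L+t)^{-1}}\leq\sup_{x\geq c}1/(x+t)=1/(c+t)$. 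For positivity, $L^{-1}=\Phi_L(1/x)$ and $1/x\geq 0$ on $\spec(L)\subset[c,\infty)$; since $\Phi_L$ is a $*$-homomorphism it carries positive functions to positive operators.

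For the integral formula I would begin with the elementary scalar identity $1/\sqrt{s}=(2/\pi)\int_0^\infty(s+t^2)^{-1}dt$ for $s>0$, obtained by the substitution $t=\sqrt{s}\tan\theta$. The partial integrals
\[
h_N(x):=\frac{2}{\pi}\int_0^N\frac{dt}{x+t^2}=\frac{2}{\pi\sqrt{x}}\arctan\!\left(\tfrac{N}{\sqrt{x}}\right)
\]
converge to $1/\sqrt{x}$ uniformly on $[c,\infty)$, since the remainder $(2/(\pi\sqrt{x}))\arctan(\sqrt{x}/N)$ is bounded above by $1/N$ uniformly in $x\geq c$ (split on $\sqrt{x}\lessgtr N$ and use $\arctan(y)\leq\min(y,\pi/2)$). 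Hence $\Phi_L(h_N)\to 1/\sqrt{L}$ in operator norm by Theorem \ref{thm:kucerovsky}(6). On the other hand, the norm bound from the previous paragraph gives absolute convergence of the Bochner integral
\[
\int_0^\infty\norm{(L+t^2)^{-1}}\,dt\leq\int_0^\infty\frac{dt}{c+t^2}=\frac{\pi}{2\sqrt{c}}.
\]
For each finite $N$, the exchange of the bounded linear map $\Phi_L$ with the Bochner integral over $[0,N]$ yields $\Phi_L(h_N)=(2/\pi)\int_0^N(L+t^2)^{-1}\,dt$; passing to the limit $N\to\infty$ gives the claimed absolutely convergent representation $1/\sqrt{L}=(2/\pi)\int_0^\infty(L+t^2)^{-1}\,dt$.
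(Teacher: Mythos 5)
Your proof is correct, but it follows a genuinely different route from the paper's in both of its nontrivial steps. For the spectral inclusion $\spec(L)\subset[c,\infty)$, the paper first derives the resolvent bound $\norm{(L+is)^{-1}}\leq (c^2-s^2)^{-1/2}$ from the Cauchy--Schwarz estimate $\norm{Lu}\geq c\norm{u}$, converts it via the spectral mapping property into $\inf_{t\in\spec(L)}|t+is|\geq\sqrt{c^2-s^2}$, lets $s\to 0$ to exclude $(-c,c)$ from the spectrum, and then runs a bootstrapping argument (if $-t\notin\spec(L)$ then $(-2t-c,c)\cap\spec(L)=\emptyset$) to sweep out all of $(-\infty,c)$; your bump-function argument, which pushes the positivity hypothesis through the order-preserving $*$-homomorphism $\Phi_L$ and part (4) of Theorem \ref{thm:kucerovsky}, reaches the same conclusion in one step and avoids the iteration entirely (just note that it suffices to take $\lambda\in\spec(L)$ real, since $\spec(L)\subset\bR$ and the closure in $\overline{\bR}$ adds at most $\pm\infty$; also your ``$\norm{\Phi_L(\phi)u}^2$'' should strictly be the $\gA$-valued product $\scpr{\Phi_L(\phi)u,\Phi_L(\phi)u}$, from which $\Phi_L(\phi)u=0$ follows by definiteness). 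For the integral formula, the paper substitutes $T=L^{-1}$, reformulates the identity as $\sqrt{T}=\tfrac{2}{\pi}\int_0^\infty T(1+Tt^2)^{-1}dt$ for a positive element of an arbitrary $\cstar$-algebra, proves it for $\Lin(H)$ via spectral measures and Fubini, and transfers it through a faithful GNS representation; you instead stay inside the unbounded functional calculus, using the uniform convergence of the partial integrals $h_N\to x^{-1/2}$ on $[c,\infty)$ together with the spectral norm bound of Theorem \ref{thm:kucerovsky}(6). Your version is more self-contained and makes the role of the strict lower bound $c$ explicit in the error estimate $1/N$; the paper's version buys a reusable $\cstar$-algebra identity valid for any positive element. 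I see no gaps.
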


\begin{proof}
First we show that $L$ is invertible. By (\ref{cauchy-schwarz})
\[
\norm{Lu} \norm{u} \geq \norm{\scpr{Lu,u}} \geq c \norm{\scpr{u,u}}=c \norm{u}^2
\]
and hence $\norm{Lu} \geq c \norm{u}$ for each $u \in \dom (L)$. Since $L$ is self-adjoint and regular, $L+is$ is invertible for all $s \in \bR$, $s\neq 0$. Then
\[
\norm{(L+is)u}^2  = \norm{\scpr{Lu,Lu}+ s^2 \scpr{u,u}}\geq \norm{\scpr{Lu,Lu}} - s^2 \norm{\scpr{u,u}} = 
\]
\[
\norm{Lu}^2 - s^2 \norm{u}^2 \geq (c^2-s^2) \norm{u}^2
\]
and $\norm{(L+is)^{-1}} \leq \frac{1}{\sqrt{c^2-s^2}}$ for $0<|s|<c$. When combined with the equation
\[
\norm{(L+is)^{-1}} = \sup_{t \in \spec (L)} |\frac{1}{t+is}| = \frac{1}{\inf_{t\in \spec (L)} |t+is|}
\]
derived from Theorem \ref{thm:kucerovsky} (6), it follows that
\[
\inf_{t\in \spec (L)} |t+is| \geq \sqrt{c^2-s^2}.
\]
In the limit $s \to 0$, we get that $(-c,c) \cap \spec (L)= \emptyset$; in particular $0 \not \in \spec (L)$ and $L$ is invertible.

Now let $U = \bR\setminus \spec (L)$ and assume that $-t \in U$, $t>0$. Then $L+t$ is invertible and by (\ref{cauchy-schwarz})
\[
\norm{(L+t)u} \norm{u} \geq \norm{\scpr{(L+t)u,u}} \geq (c+t) \norm{u}^2, 
\]
hence $\norm{(L+t)u} \geq (c+t) \norm{u}$ for all $u \in \dom (L)$. Running the same argument as before, it follows that $(-c-t,c+t)\cap \spec (L+t)=\emptyset$, hence $(-2t-c,c)\subset U$. Therefore if $-t \in U$, then $(-2t-c,c)\subset U$, which implies that $(-\infty,c) \subset U$. Therefore $\spec_{\Lin_{\gA}(E)} (L^{-1}) \subset [0, \frac{1}{c}]$ by Theorem \ref{thm:kucerovsky} (6), and $L^{-1}$ is positive. To prove the integral formula, we define $T:= L^{-1} \in \Lin_{\gA} (E)$ and rewrite the formula as
\begin{equation}\label{integral-formula-second-form}
\sqrt{T}= \frac{2}{\pi} \int_0^{\infty} \frac{1}{L+t^2} dt = \frac{2}{\pi} \int_0^{\infty} \frac{T}{1+Tt^2} dt.
\end{equation}
This formula only involves \emph{elements of the $\cstar$-algebra $\Lin_{\gA}(E)$}. We claim that (\ref{integral-formula-second-form}) holds in every $\cstar$-algebra. In other words, let $\gB$ be a $\cstar$-algebra and $T \in \gB$ a positive self-adjoint element. Then the formula (\ref{integral-formula-second-form}) holds and the integral on the right-hand side converges absolutely. To show that the integral converges absolutely, note that $\norm{\frac{T}{1+Tt^2}} \leq \sup_{0 \leq x \leq \norm{T}} \frac{x}{1+t^2 x} =\frac{\norm{T}}{1+t^2 \norm{T}}$.

For $\gB= \Lin (H)$, $H$ a Hilbert space, the formula (\ref{integral-formula-second-form}) is proven using spectral measures. Let $v \in H$ and let $\mu_v$ be the unique Borel measure on $\spec (T)$ such that $\scpr{f(T)v,v} = \int_{\spec(T)} f(x) d\mu_v (x)$ for all continuous $f: \spec (T) \to \bC$. Using the elementary integral $\frac{2}{\pi}\int_0^{\infty} \frac{x}{1+x t^2} dt = \sqrt{x}$, we compute by Fubini's theorem
\[
\scpr{\sqrt{T}v,v} = \int_{\spec(T)} \sqrt{x} d\mu_v (x)= \int_{\spec(T)} \frac{2}{\pi}\int_0^{\infty} \frac{x}{1+x t^2} dt d\mu_v (x) = 
\]
\[
= \frac{2}{\pi}\int_0^{\infty} \int_{\spec(T)} \frac{x}{1+x t^2}  d\mu_v (x) dt= \frac{2}{\pi}\int_0^{\infty} \scpr{ \frac{T}{1+T t^2}v,v} dt = \scpr{ \frac{2}{\pi}\int_0^{\infty}  \frac{T}{1+T t^2} dt v, v}.
\]
Since this holds for all $v\in H$ and since both operators are self-adjoint, this proves (\ref{integral-formula-second-form}) for $\gB=\Lin (H)$. 

In the general case, let $\Pi:\gB \to \Lin (H)$ be a $*$-representation on some Hilbert space. Then we get
\[
\Pi (\sqrt{T}) = \sqrt{\Pi (T)} =  \frac{2}{\pi} \int_0^{\infty} \frac{\Pi(T)}{1+\Pi(T)t^2} dt = \Pi( \frac{2}{\pi} \int_0^{\infty} \frac{T}{1+Tt^2} dt)
\]
because the functional calculus commutes with $*$-homomorphisms. If we use an injective $\Pi$, the desired formula follows, and the Gelfand-Naimark-Segal representation theorem guarantees the existence of a faithful representation.
\end{proof}

\section{Parametrized elliptic regularity theory}\label{sec:parametrized-regularity}

The next goal is to setup a formalism in which elliptic regularity for \emph{families} of elliptic differential operators as introduced in section \ref{sec:families-diffops-ga-linear} can be formulated and proven. We use the theory of \emph{continuous fields of Banach spaces} due to Dixmier and Douady \cite{DD}, and begin by recalling the relevant material from that paper.

\subsection{Continuous fields of Hilbert modules}\label{sec:continuous-fields}

\subsubsection{Continuous fields of Banach spaces}
Let $X$ be a topological space. Let $H=(H_x)_{x \in X}$ be a family of normed $\bC$-vector spaces. Let $\Tot( H) := \coprod_{x \in X} H_x$ and let $p:\Tot (H) \to X$ be the natural map.
The vector space $\prod_{x \in X} H_x$ can be viewed as the space of (set-theoretic) cross-sections of $p$. Accordingly, we denote 
the image of $s \in \prod_{x \in X} H_x$ under the projection $\prod_{x \in X} H_x \to H_x$ by the symbol $s(x)$.
Clearly, $\prod_{x \in X} H_x$ is a module over the algebra $C(X)$ of continuous functions $X \to \bC$ (in fact, over the algebra of all $\bC$-valued functions). 

\begin{defn}\label{defn:cont-field}(Dixmier-Douady)
A \emph{continuous field of Banach spaces} or shorter \emph{Banach field} is a family $H=(H_x)_{x \in X}$ of Banach spaces, together with a subspace $\Gamma \subset  \prod_{x \in X} H_x$ of ``continuous sections'', which satisfies the following axioms:
\begin{enumerate}
\item $\Gamma$ is a $C(X,\bC)$-submodule of $\prod_{x \in X} H_x $.
\item For each $x \in X$ and $u \in H_x$, there is $s \in \Gamma$ with $s(x) =u$.
\item For each $s \in \Gamma$, the function $X \to \bR$ given by $x \mapsto \norm{s(x)}$ is continuous.
\item If $t \in \prod_{x \in X}H_x$ is such that for each $x\in X$ and each $\epsilon>0$, there is a neighborhood $U\subset X$ of $x$ and $s \in \Gamma$ such that $\sup_{y \in U}\norm{t(y)-s(y)} \leq \epsilon$ on $U$, then $t  \in \Gamma$.
\end{enumerate}
A subset $\Lambda \subset \Gamma$ is called \emph{total} if for each $x \in X$, the linear span of the set $\{ s (x) \vert s \in \Lambda\} \subset H_x$ is dense.
\end{defn}

In \cite{DD}, \S 2, a topology on $\Tot (H)$ is constructed which induces the metric topology on each fibre $H_x$, makes the map $\Tot( H) \to X$ continuous and such that $\Gamma$ is precisely the space of continuous cross-sections.
The space of sections $\Gamma$ of a Banach field has a completeness property, which follows immediately from the axioms of a Banach field:

\begin{lem}\label{convergence-of-sections}
Let $(H, \Gamma)$ be a Banach field over $X$ and let $(s_n)$ be a sequence in $\Gamma$. Suppose that for each $x \in X$ and for each $\epsilon>0$, there is an open neighborhood $x \in U\subset X$ and $n_0 $, such that for $n,m\geq n_0$, we have $\sup_{y \in U}\norm{s_n (y)-s_m (y)} \leq \epsilon$. Then $s \in \prod_{x} H_x$, defined by $s(x):= \lim_n s_n (x)$, is in $\Gamma$. 
\end{lem}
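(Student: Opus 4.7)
The plan is to deduce the lemma as essentially a direct application of axiom (4) in Definition \ref{defn:cont-field}, once we have verified that the pointwise limit exists.

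First, I would check that $s(x) := \lim_n s_n(x)$ is a well-defined element of $H_x$ for each $x \in X$. Indeed, given $x$, apply the hypothesis with any fixed $\epsilon > 0$ to obtain a neighborhood $U$ of $x$ and $n_0$ with $\norm{s_n(y) - s_m(y)} \leq \epsilon$ for all $y \in U$ and $n, m \geq n_0$; specializing to $y = x$ shows that $(s_n(x))_n$ is Cauchy in the Banach space $H_x$, hence convergent. Thus $s \in \prod_{x \in X} H_x$ is defined.

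Next, to show $s \in \Gamma$, I would verify the hypothesis of axiom (4). Fix $x \in X$ and $\epsilon > 0$. Applying the hypothesis of the lemma, choose a neighborhood $U$ of $x$ and $n_0$ such that
\[
\sup_{y \in U} \norm{s_n(y) - s_m(y)} \leq \epsilon \qquad \text{for all } n, m \geq n_0.
\]
Fix some $n \geq n_0$. For each fixed $y \in U$, the inequality $\norm{s_n(y) - s_m(y)} \leq \epsilon$ holds for all $m \geq n_0$; letting $m \to \infty$ in $H_y$ (and using continuity of the norm on $H_y$) gives $\norm{s_n(y) - s(y)} \leq \epsilon$. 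Taking the supremum over $y \in U$ yields $\sup_{y \in U} \norm{s(y) - s_n(y)} \leq \epsilon$, with $s_n \in \Gamma$.

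This shows that $s$ satisfies the local-approximation condition in axiom (4), and therefore $s \in \Gamma$. There is no real obstacle here beyond being careful about the order of operations: one must construct $s$ pointwise first (so the statement even makes sense), and then exploit the fact that the uniform Cauchy estimate survives passing to the limit in one variable to produce, for each $(x, \epsilon)$, a single section $s_n$ from the given sequence that $\epsilon$-approximates $s$ on a neighborhood of $x$.
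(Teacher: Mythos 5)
Your proof is correct and is exactly the argument the paper has in mind: the paper simply asserts that the lemma ``follows immediately from the axioms of a Banach field,'' and your write-up supplies the two routine steps (pointwise Cauchy completeness in each $H_x$, then passing to the limit in $m$ in the uniform Cauchy estimate to verify the hypothesis of axiom (4)). Nothing is missing.
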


Many Banach spaces in analysis arise by completion of normed spaces, and the situation is not much different for Banach fields. Let us define an incomplete version of Banach fields. Lemma \ref{completion-of-prehilbfields} below shows how to pass to Banach fields.

\begin{defn}\label{defn:pre-banachfiled}
Let $X$ be a topological space. A \emph{pre-Banach field} is a pair $(E,\Lambda)$, consisting of a family $E=(E_x)_{x \in X}$ of normed vector spaces and a subvector space $\Lambda \subset \prod_{x \in X} E_x$ such that
\begin{enumerate}
\item For each $s \in \Lambda$, the function $X \to \bR$ given by $x \mapsto \norm{s(x)}$ is continuous.
\item For each $x \in X$, the image of the map $\Lambda \to H_x$, $s \mapsto s(x)$ is dense.
\end{enumerate}
\end{defn}

\subsubsection{Bounded operator families}

Let $(H_0,\Gamma_0) $ and $ (H_1, \Gamma_1)$ be pre-Banach fields on $X$. Any family $F:=(F_x)_{x \in X}$ of linear maps $F_x: (H_0)_x \to (H_1)_x$ induces maps $ \Tot (H_0) \to \Tot (H_1)$ and $ \prod_{x \in X} (H_0)_x \to \prod_{x \in X} (H_1)_x $, both denoted by $F$. 

\begin{defn}\label{defn-homomorphism}
Let $(E_0,\Lambda_0)$ and $(E_1, \Lambda_1)$ be pre-Banach fields over $X$. A \emph{bounded operator family} or \emph{homomorphism} $F: (E_0,\Lambda_0)\to (E_1, \Lambda_1)$ is a family $(F_x)_{x \in X}$ of bounded linear maps $F_x: (E_0)_x \to (E_1)_x$, such that
\begin{enumerate}
\item If $s \in \Lambda_0$, then $F(s) \in \Lambda_1$ and
\item the function $X \to \bR$, $x \mapsto \norm{F_x}$ is locally bounded.
\end{enumerate}
\end{defn}

It follows from \cite[Proposition 5]{DD} that in the case of Banach fields, this definition coincides with the one given in \cite[\S 4]{DD}. It is clear that linear combinations and composition of homomorphisms are again homomorphisms, and \emph{isomorphisms} are defined categorically. 
One could naively expect that if $(H_i, \Gamma_i)$, $i=0,1$, are Banach fields, the collection $(\Lin ((H_0)_x,(H_1)_x))_{x\in X}$ forms a Banach field whose continuous sections are the homomorphisms $H_0 \to H_1$, but \emph{this is not true}, unless $H_0$ is a finite dimensional vector bundle. In fact, the function $x \mapsto \norm{F_x}$ is in general not continuous, and therefore axiom (3) of Definition \ref{defn:cont-field} is violated.
This is the reason why the next Lemma does not follow immediately from Lemma \ref{convergence-of-sections}.

\begin{lem}\label{convergence-of-operators}
Let $(H_0,\Gamma_0)$ and $(H_1, \Gamma_1)$ be Banach fields over $X$ and let $F_n : H_0 \to H_1$ be a sequence of bounded operator families. Assume that
\begin{enumerate}
\item For each $x \in X$, the limit $F_x := \lim_{n \to \infty} (F_n)_x \in \Lin ((H_0)_x, (H_1)_x)$ (in the norm-topology!) exists. 
\item The convergence is locally uniform, i.e. the functions $X \to \bR$, $x \mapsto \norm{F_x-(F_n)_x}$ converge locally uniformly to $0$.
\end{enumerate}
Then $F=(F_x)_x$ is a bounded operator family.
\end{lem}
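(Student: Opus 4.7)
The plan is to verify the two axioms of Definition \ref{defn-homomorphism} for the family $F=(F_x)_{x\in X}$: namely, that $F$ carries $\Gamma_0$ into $\Gamma_1$ and that $x\mapsto\|F_x\|$ is locally bounded. Both parts will follow from the locally uniform norm-convergence hypothesis (2), together with the fact that the section norms $x\mapsto\|s(x)\|$ are continuous for $s\in\Gamma_0$.

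First I would handle local boundedness of $\|F_x\|$. Fix $x_0\in X$. By hypothesis (2), there is a neighbourhood $U$ of $x_0$ and an index $n_0$ such that $\sup_{y\in U}\|F_y-(F_{n_0})_y\|\le 1$. Since $F_{n_0}$ is a bounded operator family, Definition \ref{defn-homomorphism}(2) gives, after possibly shrinking $U$, a bound $\|(F_{n_0})_y\|\le C$ on $U$. Then $\|F_y\|\le C+1$ on $U$, giving the required local boundedness.

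Next I would show that $F(s)\in\Gamma_1$ for every $s\in\Gamma_0$. I would like to invoke Lemma \ref{convergence-of-sections} applied to the sequence $(F_n(s))\subset\Gamma_1$; so I must verify its uniform local Cauchy hypothesis for $F_n(s)$, and then identify the pointwise limit with $F(s)$. Fix $x_0$ and $\epsilon>0$. By continuity of $y\mapsto\|s(y)\|$ (Definition \ref{defn:cont-field}(3)), there is a neighbourhood $U$ of $x_0$ on which $\|s(y)\|\le M$ for some $M>0$. By hypothesis (2) we can shrink $U$ further and choose $n_0$ so that $\|F_y-(F_n)_y\|\le \epsilon/M$ for all $y\in U$ and $n\ge n_0$. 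Then
\[
\|F(s)(y)-F_n(s)(y)\|=\|(F_y-(F_n)_y)s(y)\|\le \|F_y-(F_n)_y\|\,\|s(y)\|\le \epsilon
\]
for $y\in U$ and $n\ge n_0$. In particular, for $n,m\ge n_0$ the triangle inequality gives $\sup_{y\in U}\|F_n(s)(y)-F_m(s)(y)\|\le 2\epsilon$, so the Cauchy hypothesis of Lemma \ref{convergence-of-sections} holds, and the pointwise limit—which is $F(s)$ by hypothesis (1)—lies in $\Gamma_1$.

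The main (and only genuine) obstacle I expect is the observation already flagged before the lemma: the fibrewise operator norms do not in general assemble into a continuous field, so one cannot just appeal to completeness of a hypothetical Banach field of operators. The workaround above is exactly to combine local uniform operator convergence with continuity of $\|s(\cdot)\|$ to convert operator convergence into sectionwise convergence, where Lemma \ref{convergence-of-sections} is applicable. Everything else is routine bookkeeping with the axioms of a continuous field.
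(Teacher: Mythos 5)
Your proof is correct and follows the same route as the paper: local boundedness of $\|F_x\|$ is handled by combining hypothesis (2) with local boundedness of a single $F_{n_0}$, and the section property $F(s)\in\Gamma_1$ is obtained by verifying the Cauchy hypothesis of Lemma \ref{convergence-of-sections} for the sequence $F_n(s)$. The paper's proof is just a terser version of exactly this argument; your write-up supplies the details (the local bound $M$ on $\|s(\cdot)\|$) that the paper leaves implicit.
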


\begin{proof}
The second condition of Definition \ref{defn-homomorphism} is trivial to check. For the first condition, let $s \in \Gamma_0$. We have to check that $x \mapsto F s(x) = \lim_n F_n (s(x))$ is in $\Gamma_1$, and this follows from Lemma \ref{convergence-of-sections}.\end{proof}

\begin{lem}\cite[p. 236]{DD}\label{isomorphism-criterion}
Let $F:(H_0, \Gamma_0)\to (H_1, \Gamma_1)$ be a bounded operator family between Banach fields on $X$. 
Assume that
\begin{enumerate}
\item each $F_x$ is an invertible operator $(H_0)_x \to (H_1)_x$ and
\item the function $X \to \bR$ given by $x \mapsto \norm{F_x^{-1}}$ is locally bounded.
\end{enumerate}
Then $F$ is an isomorphism.
\end{lem}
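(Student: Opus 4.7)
The plan is to show that $G := (F_x^{-1})_{x \in X}$ satisfies the two conditions of Definition \ref{defn-homomorphism}. Condition (2), local boundedness of $x \mapsto \norm{G_x}$, is precisely hypothesis (2) of the Lemma, so the entire content is in checking condition (1): for every $t \in \Gamma_1$, the section $G(t)$ defined fibrewise by $G(t)(x) := F_x^{-1} t(x)$ belongs to $\Gamma_0$.

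To verify this I would use axiom (4) of Definition \ref{defn:cont-field}, which reduces the problem to \emph{local} approximation of $G(t)$ by elements of $\Gamma_0$. Fix $x_0 \in X$ and $\epsilon > 0$. Set $u_0 := F_{x_0}^{-1} t(x_0) \in (H_0)_{x_0}$. By axiom (2) of Definition \ref{defn:cont-field} applied to $(H_0,\Gamma_0)$, there exists $s \in \Gamma_0$ with $s(x_0) = u_0$. Since $F$ is a homomorphism, $F(s) \in \Gamma_1$, and therefore $t - F(s) \in \Gamma_1$ with $(t - F(s))(x_0) = t(x_0) - F_{x_0} u_0 = 0$. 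By axiom (3), the function $y \mapsto \norm{t(y) - F_y s(y)}$ is continuous on $X$ and vanishes at $x_0$.

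The key identity is
\[
G(t)(y) - s(y) \;=\; F_y^{-1} t(y) - s(y) \;=\; F_y^{-1}\bigl(t(y) - F_y s(y)\bigr),
\]
whence
\[
\norm{G(t)(y) - s(y)} \;\leq\; \norm{F_y^{-1}} \cdot \norm{t(y) - F(s)(y)}.
\]
By hypothesis there is a neighborhood $U_0$ of $x_0$ on which $y \mapsto \norm{F_y^{-1}}$ is bounded by some constant $C$; by continuity of $y \mapsto \norm{t(y) - F(s)(y)}$ and its vanishing at $x_0$, we can shrink to a neighborhood $U \subset U_0$ of $x_0$ on which this norm is at most $\epsilon/C$. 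Then $\sup_{y \in U} \norm{G(t)(y) - s(y)} \leq \epsilon$, which is exactly the hypothesis of axiom (4) of Definition \ref{defn:cont-field}. Hence $G(t) \in \Gamma_0$, as required.

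The only slightly delicate point is recognizing that one should not try to produce a single global continuous section approximating $G(t)$, nor argue by norm-continuity of $F^{-1}$ (which would fail since, as the authors note, the norm $x \mapsto \norm{F_x}$ need not be continuous for infinite-dimensional fields). Instead, axiom (4) is tailor-made for this kind of \emph{local} comparison, and the bound on $\norm{F_y^{-1}}$ plays exactly the role of converting a norm estimate in $H_1$ into one in $H_0$. Once $G$ is shown to be a bounded operator family, $G \circ F = \id$ and $F \circ G = \id$ fibrewise, so $G$ is a two-sided inverse in the category of Banach fields and $F$ is an isomorphism.
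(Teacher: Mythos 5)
Your proof is correct. The paper does not reproduce an argument for this lemma but simply cites Dixmier--Douady, and your reasoning (reduce to showing $G=(F_x^{-1})_x$ maps $\Gamma_1$ into $\Gamma_0$, then verify the local-approximation axiom (4) of Definition \ref{defn:cont-field} using a section $s\in\Gamma_0$ through $F_{x_0}^{-1}t(x_0)$ together with the local bound on $\norm{F_y^{-1}}$) is precisely the standard argument for this fact.
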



\begin{lem}\label{completion-of-prehilbfields}
Let $X$ be a topological space and let $(E, \Lambda)$ be a pre-Banach field on $X$. 
\begin{enumerate}
\item Let $H_x:= \overline{E_x}$ be the Banach space completion of $E_x$ and $H:=(H_x)_{x\in X}$. Then there exists a unique subspace $\Gamma \subset \prod_{x \in X} H_x$, such that $\Lambda \subset \Gamma$ and such that $(H,\Gamma)$ is a Banach field. The space $\Gamma$ is the space of all $s$ such that for each $x \in X$ and $\eps>0$, there is a neighborhood $U$ and $t \in \Lambda$ such that $\sup_{y\in U} \norm{s(y)-t(y)} \leq \eps$. 
We write $\overline{(E, \Lambda)}:=(H, \Gamma)$. Moreover $\Lambda \subset \Gamma$ is a total subspace, and if $(E, \Lambda)$ is a Banach field, then $\Gamma = \Lambda$.
\item If $(E', \Gamma')$ is a Banach field and $F=(F_x)_x$ a family of bounded linear maps $E_x \to E'_x $ such that $F(\Lambda)\subset \Gamma'$ and $\norm{F_x}$ is locally bounded, then there is a unique extension of $F$ to a bounded operator family $\overline{F}:\overline{(E, \Lambda)} \to (E', \Gamma')$.
\end{enumerate}
\end{lem}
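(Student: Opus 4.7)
The plan is to verify directly that the explicit set $\Gamma$ described by the local uniform sup-norm approximability condition is a Banach field, and that it is the unique one extending $\Lambda$; equivalently, this is an application of the Dixmier--Douady existence theorem for Banach fields generated by a total subspace. Viewed inside $\prod_{x} H_x$ (with $H_x = \overline{E_x}$), $\Lambda$ has continuous fibrewise norms (condition (1) of Definition \ref{defn:pre-banachfiled}) and its fibre-images are dense in each $H_x$ (since they are dense in $E_x$, and $E_x$ is dense in $H_x$ by construction), so the hypotheses of the DD totality construction are met.

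For part (1), I would verify the four axioms of Definition \ref{defn:cont-field} for the explicit $\Gamma$ one by one. Continuity of $x \mapsto \norm{s(x)}$ (axiom 3) follows from an $\eps/2$-argument using the continuity of $\norm{t(\cdot)}$ for $t \in \Lambda$. The $C(X)$-module property (axiom 1) is the key algebraic check: for $f \in C(X)$ and $s \in \Gamma$, locally approximate $f$ by the constant $f(x_0)$ and $s$ by some $t \in \Lambda$; then $f(x_0) t \in \Lambda$ (using the $\bC$-vector space structure of $\Lambda$) approximates $fs$ on a small neighborhood, where $\norm{s}$ is locally bounded (by the already-established axiom 3). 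Axiom (4), closure under local uniform approximation by elements of $\Gamma$, is a classical $\eps/2$-argument combining two layers of approximation. Axiom (2), prescribing a fibre value $u \in H_x$, requires producing a section $s \in \Gamma$ with $s(x) = u$: starting from $t_n \in \Lambda$ with $\norm{t_n(x) - u} \to 0$, one exploits the continuity of the differences $\norm{t_n - t_m}$ to build a locally Cauchy family of sections in the sense of Lemma \ref{convergence-of-sections}, or simply invokes the DD theorem. Uniqueness of $\Gamma$, totality of $\Lambda$ in $\Gamma$, and the equality $\Gamma = \Lambda$ when $\Lambda$ is already a Banach field are all immediate consequences of axiom (4).

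For part (2), each $F_x: E_x \to E_x'$ extends uniquely to $\overline{F}_x: H_x \to H_x'$ by fibrewise continuity, preserving operator norms, so local boundedness of $\norm{F_x}$ transfers to $\norm{\overline{F}_x}$. Given $s \in \Gamma$ and $x_0 \in X$ and $\eps > 0$, choose a neighborhood $U$ with $\norm{F_y} \leq M$ on $U$ and $t \in \Lambda$ with $\sup_{y \in U} \norm{s(y) - t(y)} \leq \eps/M$; then $\sup_{y \in U} \norm{\overline{F}(s)(y) - F(t)(y)} \leq \eps$, and $F(t) \in \Gamma'$ by hypothesis, so axiom (4) for the Banach field $(E', \Gamma')$ gives $\overline{F}(s) \in \Gamma'$. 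Uniqueness of $\overline{F}$ is inherited from the fibrewise uniqueness of continuous extensions and the fact that $\Lambda$ is dense in $\Gamma$ in each fibre.

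The main obstacle I anticipate is the verification of axiom (2) of Definition \ref{defn:cont-field} in the construction of $\Gamma$: producing an honest section $s \in \Gamma$ with a prescribed fibre value $u$ requires an approximation property that must hold \emph{for every} $\eps > 0$ simultaneously, whereas naive point-modifications of a single $t \in \Lambda$ only yield approximation within a fixed $\eps$. The cleanest resolution is to appeal to the Dixmier--Douady theorem rather than a bare-hands construction; all remaining steps are routine $\eps/2$-arguments.
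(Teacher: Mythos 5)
Your proposal is correct and follows essentially the same route as the paper, which simply cites Dixmier--Douady (Proposition 3 for part (1), Proposition 5 for part (2)); your hands-on verification of the axioms, including the $\eps/M$ argument reducing part (2) to axiom (4) of $(E',\Gamma')$, is precisely the content of those propositions. You also correctly isolate the one genuinely nontrivial point --- axiom (2) of Definition \ref{defn:cont-field} for the constructed $\Gamma$ --- and your resolution (defer to the Dixmier--Douady construction, which cuts off the increments $t_{n+1}-t_n$ on shrinking neighborhoods so that the resulting section is a uniform limit of elements of $\Lambda$) matches the paper's.
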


\begin{proof}
The first part is a reformulation of \cite[Proposition 3]{DD}. For the second part, let $\overline{F}_x: \overline{E_x} \to E'_x$ be the (unique) extension of the bounded operator $F_x$ to the Banach space completion. This defines $\overline{F}$, and it remains to be proven that it is a homomorphism. By construction $\norm{\overline{F}_x} = \norm{F_x}$. Moreover $\overline{F}(\Lambda) \subset \Gamma'$. A straightforward application of \cite[Proposition 5]{DD} shows that $\overline{F}(\overline{\Lambda}) \subset \Gamma'$, and this finishes the proof.
\end{proof}

\begin{corollary}\label{cor:completion-of-total-subspace}
Let $(H,\Gamma)$ be a Banach field and let $\Lambda\subset \Gamma$ be a total subspace. Then $\overline{(H, \Lambda)}=(H,\Gamma)$.
\end{corollary}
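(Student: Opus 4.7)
The plan is to apply Lemma \ref{completion-of-prehilbfields}(1) and then verify directly that the section space it produces coincides with $\Gamma$. First, I would check that $(H, \Lambda)$ satisfies Definition \ref{defn:pre-banachfiled}: condition (1) is immediate since $\Lambda \subset \Gamma$ and $(H,\Gamma)$ is a Banach field, so the norm functions $x \mapsto \norm{s(x)}$ are continuous for $s \in \Lambda$; condition (2) follows because $\Lambda$ is assumed to be a linear subspace, so the evaluation image $\{s(x) : s \in \Lambda\}$ is itself a linear subspace of $H_x$, and totality says precisely that its linear span (equal to itself) is dense.

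Since the fibres $H_x$ are already complete, the completion $\overline{(H,\Lambda)}$ has underlying family $H$ and section space $\Gamma'' \subset \prod_{x} H_x$, where by Lemma \ref{completion-of-prehilbfields}
\[
\Gamma'' = \{s \in \textstyle\prod_x H_x \mid \forall x \in X, \, \forall \eps>0, \, \exists \text{ nbhd } U \ni x, \, \exists t \in \Lambda : \sup_{y \in U} \norm{s(y) - t(y)} \leq \eps\}.
\]
It remains to show $\Gamma'' = \Gamma$.

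The inclusion $\Gamma'' \subset \Gamma$ follows immediately from axiom (4) of Definition \ref{defn:cont-field} applied to $(H,\Gamma)$: if $s$ admits local uniform $\eps$-approximations by elements of $\Lambda \subset \Gamma$, then $s \in \Gamma$. For the reverse inclusion, fix $s \in \Gamma$, $x_0 \in X$, and $\eps > 0$. By totality (and the fact that $\Lambda$ is a subspace, so evaluations at $x_0$ form a dense linear subspace of $H_{x_0}$), there is $t \in \Lambda$ with $\norm{s(x_0) - t(x_0)} < \eps/2$. The section $s - t$ lies in $\Gamma$, so by axiom (3) the function $y \mapsto \norm{s(y) - t(y)}$ is continuous at $x_0$, hence there is a neighborhood $U$ of $x_0$ on which this function stays below $\eps$. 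Thus $s \in \Gamma''$.

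There is no real obstacle here; the only point worth flagging is that the argument relies critically on $\Lambda$ being a linear subspace (so that its fibrewise image is dense, not merely spanning a dense subspace), and on the continuity-of-norm axiom, which propagates a pointwise estimate at $x_0$ to a local uniform estimate.
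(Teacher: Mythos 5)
Your proof is correct. It takes the same starting point as the paper --- the explicit description of the completion's section space $\Gamma''$ from Lemma \ref{completion-of-prehilbfields}(1) --- but then finishes differently: the paper's (one-line) proof combines that lemma with Lemma \ref{isomorphism-criterion}, i.e.\ it extends the family of identity maps to a bounded operator family $\overline{(H,\Lambda)}\to (H,\Gamma)$ via Lemma \ref{completion-of-prehilbfields}(2) and then invokes the isomorphism criterion, whereas you check the two inclusions $\Gamma''\subset\Gamma$ and $\Gamma\subset\Gamma''$ by hand using axioms (4) and (3) of Definition \ref{defn:cont-field}. Your version is essentially the isomorphism-criterion argument unpacked; it is slightly more elementary and has the virtue of making explicit where totality and linearity of $\Lambda$ enter (the pointwise approximation at $x_0$ promoted to a local uniform one by continuity of $y\mapsto\norm{s(y)-t(y)}$), while the paper's version is shorter because it reuses machinery already established. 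Both are complete; your flagging of the need for $\Lambda$ to be a \emph{subspace} (so that its fibrewise image is itself dense, not merely total) is a worthwhile observation that the paper leaves implicit.
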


\begin{proof}
Combine Lemma \ref{completion-of-prehilbfields} and Lemma \ref{isomorphism-criterion}.
\end{proof}

\begin{corollary}\label{criterion-for-bounded-family}
Let $(H_i, \Gamma_i)$, $i=0,1$, be Banach fields on $X$. Let $F_x: (H_0)_x \to (H_1)_x$ be a collection of bounded operators such that $\norm{F_x}$ is locally bounded and such that there exists a total subspace $\Lambda \subset \Gamma_0$ with $F (\Lambda)\subset \Gamma_1$. Then $F(\Gamma_0) \subset \Gamma_1$, i.e. $F$ is a bounded operator family.
\end{corollary}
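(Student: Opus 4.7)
The plan is to deduce this directly from the two preceding results, namely Corollary \ref{cor:completion-of-total-subspace} and part (2) of Lemma \ref{completion-of-prehilbfields}. The idea is that the hypotheses on $\Lambda$ say precisely that $(H_0,\Lambda)$ is a pre-Banach field whose completion recovers $(H_0,\Gamma_0)$, and then the extension statement of Lemma \ref{completion-of-prehilbfields}(2) forces $F$ itself to be a homomorphism.

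In more detail, I would proceed as follows. First, observe that $(H_0,\Lambda)$ satisfies the axioms of a pre-Banach field in the sense of Definition \ref{defn:pre-banachfiled}: the continuity of $x \mapsto \norm{s(x)}$ for $s \in \Lambda$ is inherited from the Banach field structure on $(H_0,\Gamma_0)$, and density of the evaluation image at each $x$ is exactly the totality of $\Lambda$. Second, since $(H_0)_x$ is already complete, we have $\overline{(H_0)_x} = (H_0)_x$, and Corollary \ref{cor:completion-of-total-subspace} yields $\overline{(H_0,\Lambda)} = (H_0,\Gamma_0)$. Third, applying Lemma \ref{completion-of-prehilbfields}(2) to the fibrewise family $F_x \colon (H_0)_x \to (H_1)_x$, whose norms are locally bounded and which sends $\Lambda$ into $\Gamma_1$ by hypothesis, we obtain a bounded operator family
\[
\overline{F} \colon \overline{(H_0,\Lambda)} = (H_0,\Gamma_0) \longrightarrow (H_1,\Gamma_1).
\]
Fibrewise, $\overline{F}_x$ is the unique bounded extension of $F_x$ to the completion of $(H_0)_x$, but $(H_0)_x$ is already complete, so $\overline{F}_x = F_x$. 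Thus $F = \overline{F}$ is a bounded operator family and in particular $F(\Gamma_0) \subset \Gamma_1$.

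There is no real obstacle here; the only point that requires any care is the bookkeeping that the "completion" in Lemma \ref{completion-of-prehilbfields} does nothing to the fibres (because they are already Banach), so that invoking Corollary \ref{cor:completion-of-total-subspace} is legitimate and the extended family agrees with the given $F$. Once this is noted, the result is immediate.
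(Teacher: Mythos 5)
Your proof is correct and follows exactly the route the paper takes: the paper's own proof is the one-liner ``This is clear from Lemma \ref{completion-of-prehilbfields} and Corollary \ref{cor:completion-of-total-subspace},'' and your argument is precisely the intended elaboration of that line. The bookkeeping you flag --- that the fibrewise completion is trivial because the $(H_0)_x$ are already Banach spaces, so $\overline{F}_x=F_x$ --- is indeed the only point needing care, and you handle it correctly.
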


\begin{proof}
This is clear from Lemma \ref{completion-of-prehilbfields} and Corollary \ref{cor:completion-of-total-subspace}.
\end{proof}

When applied to a pre-Banach field $(H, \Lambda)$, the completion procedure does not only take the completions of the individual Banach spaces $H_x$, but also takes the completion of $\Lambda$ in a certain sense. Even if all $H_x$ are complete, $\Lambda$ might not be complete.

\begin{defn}\cite[\S 5]{DD}\label{defn:pullback-field}
Let $(H, \Gamma)$ be a Banach field over $X$ and let $f:Y \to X$ be a continuous map. The \emph{pullback} of $(H, \Gamma)$ is the following Banach field. Let $(f^* H)_y := H_{f(x)}$. For each $ s\in \Gamma$, we get a section $\tilde{s}$ of $\Tot (f^* H) \to Y$, namely $\tilde{s}(y):= s(f(y))$. Let $\tilde{\Gamma}$ be the set of those sections, and we let $f^* (H, \Gamma)$ be the completion of $(f^* H, \tilde{\Gamma})$. 
\end{defn}

The trivial Banach field $H_X$ with fibre $H$ over $X$ is the pullback of the trivial field with fibre $H$ over $*$ along the map $X \to *$. It is clear that the pullback is (strictly) functorial.

\subsubsection{Continuous fields of Hilbert-$\gA$-modules}

It is easy to define continuous fields of Banach spaces with additional structure. For example, when each of the spaces $H_x$ is a Hilbert space, we call $(H, \Gamma)$ a \emph{continuous fields of Hilbert spaces}. In that case, the function $X \to \bC; x \mapsto \scpr{s(x),t(x)}$ is continuous for all $s,t \in \Gamma$, by axiom (3) and polarization.

\begin{defn}\label{defn:continuous-field-hilbmods}
Let $\gA$ be a $\cstar$-algebra and let $X$ be a topological space. A \emph{continuous field of pre-Hilbert $\gA$-modules} is a triple $(H,\Lambda,\scpr{\_,\_})$, where $(H, \Lambda)$ is a pre-Banach field and $\scpr{\_,\_}=(\scpr{\_,\_})_x$ is a family of $\gA$-valued inner products $\scpr{\_,\_}_x$ on $H_x$, such that $\scpr{\_,\_}$ induces the norm on $E_x$ and such that for two sections $s,t \in \Lambda$, the function $X \to \gA$, $x \mapsto \scpr{s(x),t(x)}_x$ is continuous. 

If $(H,\Lambda)$ is moreover a continuous field of Banach spaces, then $(H, \Lambda, \scpr{\_,\_})$ is a \emph{continuous field of Hilbert $\gA$-modules}.
\end{defn}

It is clear that the completion procedure of Lemma \ref{completion-of-prehilbfields} produces continuous fields of Hilbert $\gA$-modules out of continuous fields of pre-Hilbert $\gA$-modules. Here is the construction of the continuous field which is of primary interest to us.

\begin{example}\label{example:basic-field-of-hilbert-modules}
Assume that $\pi: M \to X$ is a submersion, that $V \to M$ is a smooth bundle of finitely generated projective $\gA$-modules and that $g$ is a fibrewise Riemannian metric on $M$. Consider $E_x:= \Gamma_c (M_x; V_x)$ with the inner product (\ref{inner-product-for-sections-family}). Let $\Lambda \subset \prod_x E_x$ be the space $\Gamma_{cv} (M;V)$ of smooth sections with \emph{vertically compact support}, i.e. the space of all smooth sections $s: M \to V$ such that $\pi|_{\supp (s)} : \supp (s) \to X$ is a proper map. If $s \in \Lambda$, we denote by $s(x)$ the restriction $s|_{M_x}$ to the fibre. For $s,t \in \Lambda$, the function $X \to \gA$ given by $x \mapsto \scpr{s(x),t(x)}_x$ is smooth (this follows from the dominated convergence theorem). Moreover, for each $s_x \in \Gamma_c (M_x,V_x)$, there is $s\in \Lambda$ with $s|_{M_x}=s_x$ (partitions of unity). Therefore, $(E, \Lambda)$ is a continuous field of pre-Hilbert-$\gA$-modules, whose completion (in the sense of Lemma \ref{completion-of-prehilbfields}) will be 
denoted by $L^2_{\pi} (M;V)$ or $L^2_X(M;V)$. 
\end{example}


\begin{defn}\label{adjointable-ops}
Let $H_i$, $i=0,1$, be continuous fields of Hilbert $\gA$-modules on $X$ and let $F:H_0 \to H_1$ be a bounded $\gA$-linear operator family. Then $F$ is said to be \emph{adjointable} if for each $x \in X$, the operator $(F_x): (H_0)_x \to (H_1)_x$ is adjointable and if the family of adjoints $((F_x)^*)_x$ is an operator family. 
By $\Lin_{X,\gA} (H_0,H_1)$, we denote the vector space of adjointable operator families, and by $\Lin_{X,\gA} (H):= \Lin_{X,\gA} (H,H)$ the $*$-algebra of adjointable endomorphisms.
\end{defn}

If $\gA=\bC$, the condition that each $F_x$ is adjointable is of course satisfied.
If $X$ is compact, then $\Lin_{X,\gA} (H)$ is a $\cstar$-algebra, with norm $\norm{F} := \sup_{x \in X} \norm{F_x}$. 
Note that if each individual operator $F_x$ is self-adjoint, then $F$ is adjointable (and $F^*=F$), and in this case, we say that $F$ is \emph{self-adjoint}. 

\begin{lem}\label{spectral-criterion-for-invertibility}
Let $F: H \to H$ be a self-adjoint bounded operator family on a continuous field of Hilbert $\gA$-modules over a paracompact space $X$. Then $F$ is an isomorphism if and only if there is a continuous function $c: X \to (0, \infty)$ such that $\spec (F_x) \cap (-c(x),c(x)) =\emptyset$ for all $x$.
\end{lem}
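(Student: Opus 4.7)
\textbf{Proof plan for Lemma \ref{spectral-criterion-for-invertibility}.}

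For the easy direction ($\Leftarrow$), assume such a continuous $c$ exists. Then for each $x$, $0 \notin \spec(F_x)$, so $F_x$ is invertible; moreover, since $F_x$ is self-adjoint, the continuous functional calculus (Theorem \ref{thm:kucerovsky}(6) applied to $g(t)=1/t$, which is continuous on $\spec(F_x)\subset \bR\setminus(-c(x),c(x))$) gives $\norm{F_x^{-1}} = \sup_{t\in\spec(F_x)}|1/t| \leq 1/c(x)$. The function $x\mapsto 1/c(x)$ is continuous, hence locally bounded, and Lemma \ref{isomorphism-criterion} applies to conclude that $F$ is an isomorphism.

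For the reverse direction ($\Rightarrow$), suppose $F$ is an isomorphism. Then $F^{-1}:H\to H$ is a bounded operator family (by the categorical definition of isomorphism), so by condition (2) of Definition \ref{defn-homomorphism}, the function $x\mapsto \norm{(F_x)^{-1}}$ is locally bounded. Again applying Theorem \ref{thm:kucerovsky}(6) to $g(t)=1/t$ on $\spec(F_x)$, one has
\[
\norm{(F_x)^{-1}} \;=\; \frac{1}{\inf\{|t|:t\in\spec(F_x)\}} \;=\; \frac{1}{d(x)},
\]
where $d(x):=\inf\{|t|:t\in\spec(F_x)\}>0$. Equivalently, $\spec(F_x)\cap(-d(x),d(x))=\emptyset$, and $d(x)$ is locally bounded \emph{below} by positive constants: for each $x_0\in X$ there is an open neighbourhood $U$ of $x_0$ and $\delta_U>0$ with $d(y)\geq\delta_U$ for all $y\in U$.

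It remains to manufacture a \emph{continuous} positive function $c$ with $c(x)\leq d(x)$. This is where the paracompactness hypothesis on $X$ enters: choose a locally finite open cover $\{U_\alpha\}$ of $X$ together with positive numbers $\delta_\alpha>0$ satisfying $d(x)\geq\delta_\alpha$ for all $x\in U_\alpha$, and a subordinate partition of unity $\{\varphi_\alpha\}$ (i.e.\ $\supp(\varphi_\alpha)\subset U_\alpha$ and $\sum_\alpha \varphi_\alpha\equiv 1$). Define
\[
c(x) \;:=\; \sum_\alpha \varphi_\alpha(x)\,\delta_\alpha.
\]
The sum is locally finite, so $c$ is continuous. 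Because $\sum_\alpha \varphi_\alpha(x)=1$ and each $\delta_\alpha>0$, we have $c(x)>0$. Finally, whenever $\varphi_\alpha(x)>0$, one has $x\in U_\alpha$ and hence $\delta_\alpha \leq d(x)$, so
\[
c(x) \;=\; \sum_\alpha \varphi_\alpha(x)\,\delta_\alpha \;\leq\; d(x)\sum_\alpha \varphi_\alpha(x) \;=\; d(x),
\]
which gives $\spec(F_x)\cap(-c(x),c(x))=\emptyset$ as desired. The only substantive input beyond generalities is the identity $\norm{F_x^{-1}} = 1/d(x)$ from the continuous functional calculus; the rest is a standard partition-of-unity smoothing of a locally bounded-below positive function on a paracompact space.
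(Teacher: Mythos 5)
Your proof is correct and follows exactly the route the paper sketches: Lemma \ref{isomorphism-criterion} for the easy direction, and for the converse the identity $\norm{F_x^{-1}}=1/\dist{0,\spec(F_x)}$ giving local lower bounds on the spectral gap, which are then glued into a continuous function by a partition of unity. The paper's proof is only a two-sentence sketch of precisely this argument, so your write-up is a faithful (and more detailed) version of it.
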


\begin{proof}
If there is such a function $c$, then invertibility of $F$ follows immediately from Lemma \ref{isomorphism-criterion}. Vice versa, use the lower bounds for the spectrum coming from that Lemma and glue them together with a partition of unity.
\end{proof}

Let $(H_0,\Gamma_0)$ and $(H_1,\Gamma_1)$ be continuous fields of Hilbert $\gA$-modules. Given $s \in \Gamma_0$ and $t \in \Gamma_1$, one defines the \emph{rank one operator}
\[
\theta_{s,t} :u \mapsto \scpr{u,s} t.
\]
This is an adjointable operator family, and 
\[
\theta_{s,t}^{*}= \theta_{t,s}.
\]
Moreover, if $F$ is adjointable, then 
\[
F\theta_{s,t} = \theta_{s,Ft}; \; \theta_{s,t} F = \theta_{F^* s,t}.
\]
Therefore, the rank $1$ operator families generate a two-sided ideal in the algebra $\Lin_{X,\gA} (H)$. 

\begin{defn}\label{defn:compact-op-hilbertfield}
An operator family $F: H_0 \to H_1$ is \emph{compact} if for each $x \in X$ and $ \epsilon>0$, there exists a neighborhood $U$ of $x$ and an operator family $G$ which is a linear combination of rank one operator families such that $\sup_{y \in U}\norm{F_y-G_y}\leq  \epsilon$. The set of compact operator families is denoted $\Kom_{X,\gA} (H_0,H_1)$.
\end{defn}

\begin{remark}\label{remarks-compact-families}
\umbruch
\begin{enumerate}
\item The special case $\gA=\gC$ is due to \cite[\S 22]{DD}. 
\item In the special case $X=*$, Definition \ref{defn:compact-op-hilbertfield} yields the notion of a \emph{compact adjointable operator} on a Hilbert module \cite[Definition 15.2.6]{WO}.
\item If $F$ is compact, then so are the individual operators $F_x: (H_0)_x \to (H_1)_x$. The function $x \mapsto \norm{F_x}$ is continuous.
\item $\Kom_{X,\gA} (H)$ is a two-sided $*$-ideal in $\Lin_{X,\gA} (H)$. 
\item Limits of compact operator families (in the sense of Lemma \ref{convergence-of-operators}) are again compact.
\item There is a continuous field of $\cstar$-algebras on $X$ whose continuous sections are precisely the compact operator families, see \cite[\S 22]{DD}.
\item The condition of Definition \ref{defn:compact-op-hilbertfield} can be difficult to verify; but all we need is the following sufficient condition: if $H_i:= (V_i)_X$ are trivial fields and if $F: X \to \Kom_{\gA} (V_0, V_1)$ is a continuous map (the target being equipped with the norm topology), then $(F(x))_{x \in X}$ is a compact operator family. 
\end{enumerate}
\end{remark}

\begin{defn}\label{defn:fredholm-op-hilbertfield}
An adjointable operator family $F: H_0 \to H_1$ is \emph{Fredholm} if there is an adjointable $G: H_1 \to H_0$ such that $FG-1$ and $GF-1$ are compact (such a $G$ is called a \emph{parametrix}). 
\end{defn}

This is a strictly stronger condition that saying that $F$ is adjointable and each $F_x$ is Fredholm (where a Fredholm operator on a Hilbert module is an operator which is invertible modulo compacts). So the Fredholm condition is not a pointwise property, but on reasonable base spaces, it is a \emph{local} property.

\begin{lem}\label{lem:fedholm-local-prop}
Let $X$ be a paracompact space and let $H$ be a continuous field of Hilbert $\gA$-modules on $X$, and let $F\in \Lin_{\gA,X}(H)$. Assume that each $x \in X$ has a neighborhood $U \subset X$ such that $F|_U$ is Fredholm. Then $F$ is Fredholm.
\end{lem}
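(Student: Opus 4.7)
The strategy is to glue local parametrices into a global one by means of a partition of unity, exploiting that compact operator families form a two-sided $\ast$-ideal that behaves well under localization. By paracompactness of $X$, I would refine the given cover to a locally finite open cover $(U_i)_{i \in I}$ such that each $F|_{U_i}$ admits a parametrix $G_i \in \Lin_{X,\gA}(H|_{U_i})$; set $R_i := FG_i - 1$ and $R'_i := G_i F - 1$, both compact on $H|_{U_i}$. Choose a subordinate partition of unity $(\varphi_i)$ with $\supp \varphi_i \subset U_i$, and write $\varphi_i = \psi_i^2$ with $\psi_i \in C(X,[0,1])$, $\supp \psi_i \subset U_i$.

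Next, I would check that for each $i$ the sandwich $\psi_i G_i \psi_i$, initially defined only on $U_i$, extends by zero to an adjointable family $\widetilde{\psi_i G_i \psi_i} \in \Lin_{X,\gA}(H)$. The mechanism is that for a continuous section $s$ of $H$ over $X$, the section $\psi_i (s|_{U_i})$ has support in the closed subset $\supp \psi_i \subset U_i$; applying $G_i$ and multiplying once more by $\psi_i$ keeps the support inside $\supp \psi_i$, and such a section extends by zero to a continuous section of $H$ over $X$. Local finiteness of $(U_i)$ then makes
$$G := \sum_{i \in I} \widetilde{\psi_i G_i \psi_i} \in \Lin_{X,\gA}(H)$$
a well-defined adjointable operator family, because only finitely many summands are nonzero near any given point.

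To verify that $G$ is a global parametrix for $F$, I would use the automatic fact that an $X$-family $(F_x)_{x \in X}$ commutes with multiplication by scalar-valued continuous functions on $X$. Hence on $U_i$ one has $F(\psi_i G_i \psi_i) = \psi_i (FG_i) \psi_i = \psi_i R_i \psi_i + \psi_i^2$, and using $\sum_i \psi_i^2 = 1$ one gets
$$FG - 1 = \sum_{i \in I} \widetilde{\psi_i R_i \psi_i},$$
with an analogous formula for $GF - 1$. Each summand is compact on $X$: on $U_i$ it is compact because $R_i$ is compact and compacts form a two-sided ideal, and the extension by zero of a compact family supported in a closed subset of $U_i$ is compact on $X$ because any local rank-one approximation $\theta_{s,t}$ on an open $V \subset U_i$ yields the globally defined rank-one family $\theta_{\widetilde{\psi_i s}, \widetilde{\psi_i t}}$ on $X$, the sections $\psi_i s$ and $\psi_i t$ having closed support inside $U_i$. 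A locally finite sum of compact operator families is compact by Remark \ref{remarks-compact-families}(5), so $FG - 1$ and $GF - 1$ are both compact and $F$ is Fredholm.

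The main technical obstacle is the extension-by-zero step: making precise that both the bounded family $\psi_i G_i \psi_i$ and the compact family $\psi_i R_i \psi_i$, originally defined only on $U_i$, extend consistently to operator families on all of $X$. This is precisely what forces the \emph{double} cutoff $\psi_i(\cdot)\psi_i$ instead of a one-sided cutoff $\varphi_i(\cdot)$: rank-one families on $U_i$ only convert into rank-one families on $X$ once both of their defining sections are multiplied by a function supported in a closed subset of $U_i$.
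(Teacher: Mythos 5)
Your proof is correct and is exactly the argument the paper intends: its proof of Lemma \ref{lem:fedholm-local-prop} is the one-line instruction ``choose local parametrices and glue them together using a partition of unity,'' and your construction $G=\sum_i \psi_i G_i \psi_i$ with the double cutoff, the extension-by-zero of the sandwiched families, and the local finiteness of $\sum_i \psi_i R_i \psi_i$ is the standard way to carry that out.
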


\begin{proof}
Choose local parametrices and glue them together using a partition of unity.
\end{proof}

\begin{lem}\label{lem:essential-spectral-gap}
Let $X$ be paracompact and let $H$ be a continuous field of Hilbert-$ \gA$-modules on $X$, and let $F \in \Lin_{\gA,X}(H)$ be self-adjoint. Then $F$ is Fredholm if and only if there exists an \emph{essential spectral gap} for $F$, i.e. a function $c:X \to (0, \infty)$ such that whenever $f: X \times \bR \to \gC$ is a continuous function so that $\supp(f_x)\subset (-\eps(x),\eps(x))$, then $f(F)$ is compact.
\end{lem}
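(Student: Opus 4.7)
The plan is to prove the two implications separately.

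For the \emph{``if''} direction I would build an explicit parametrix via parametrized functional calculus. Choose a continuous cutoff $\chi\colon X\times\bR\to[0,1]$ with $\chi_x\equiv 1$ on $[-c(x)/2,c(x)/2]$ and $\supp(\chi_x)\subset(-c(x),c(x))$, and set $g_x(t):=(1-\chi_x(t))/t$ for $t\neq 0$, $g_x(0):=0$; each $g_x$ lies in $C_0(\bR)$ and is uniformly bounded on compacta in $X$. The essential spectral gap hypothesis makes $\chi(F)$ compact, while parametrized functional calculus yields $g(F)$ as a bounded adjointable operator family satisfying $Fg(F)=g(F)F=1-\chi(F)$; hence $g(F)$ is a parametrix and $F$ is Fredholm.

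For the \emph{``only if''} direction, the plan is first to produce a local essential gap via a parametrized Calkin-algebra argument, then to assemble a global continuous $c$ by paracompactness. Fix $x_0\in X$ and pick a compact neighborhood $K$. On $K$, $\Lin_{\gA,K}(H|_K)$ is a $\cstar$-algebra and $\Kom_{\gA,K}(H|_K)$ is a closed two-sided $*$-ideal (Remark~\ref{remarks-compact-families}), so the parametrized Calkin quotient $\mathcal{Q}_K:=\Lin_{\gA,K}(H|_K)/\Kom_{\gA,K}(H|_K)$ is a $\cstar$-algebra. Restricting a parametrix of $F$ to $K$ shows that $F|_K$ is Fredholm, so the self-adjoint class $[F|_K]\in\mathcal{Q}_K$ is invertible and its spectrum lies in $\bR\setminus(-c_K,c_K)$ for some $c_K>0$. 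Continuous functional calculus in $\mathcal{Q}_K$ then gives $\phi([F|_K])=0$ for any scalar $\phi\in C_c((-c_K,c_K))$, i.e.\ $\phi(F)|_K$ is a compact operator family.

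To upgrade this scalar compactness into the full essential-gap statement, I would define
\[
c_0(x) := \sup\bigl\{\, c>0 \;\big|\; \exists \text{ compact neighborhood } K\ni x \text{ with } \spec_{\mathcal{Q}_K}([F|_K])\cap(-c,c)=\emptyset \,\bigr\}.
\]
The previous step shows $c_0>0$ everywhere, and $c_0$ is lower semicontinuous by construction. Paracompactness of $X$ then lets one choose a continuous $c\colon X\to(0,\infty)$ with $c<c_0$ via a standard partition-of-unity argument. Given any continuous $f\colon X\times\bR\to\bC$ with $\supp(f_x)\subset(-c(x),c(x))$ and any $x_0\in X$, pick $c(x_0)<c'<c_0(x_0)$, a compact neighborhood $K$ of $x_0$ whose Calkin spectral gap exceeds $c'$, a scalar $\phi\in C_c((-c_K,c_K))$ with $\phi\equiv 1$ on $[-c',c']$, and an open $U\subset K$ containing $x_0$ on which $c<c'$. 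On $U$ the pointwise identity $\phi\cdot f_y=f_y$ yields $f(F)|_U=\phi(F)|_U\cdot f(F)|_U$; since $\phi(F)|_K$ is compact and $f(F)$ is bounded, the ideal property (Remark~\ref{remarks-compact-families}) makes $f(F)|_U$ compact. As compactness is a local property (Definition~\ref{defn:compact-op-hilbertfield}), $f(F)$ is compact on all of $X$.

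The main technical obstacle I expect is the parametrized functional calculus itself: ensuring that $f(F)$ is a genuine bounded adjointable operator family (not merely a pointwise collection), that the fiberwise identity $(\phi\cdot f_y)(F_y)=\phi(F_y)f_y(F_y)$ holds for jointly continuous $f$, and that the Calkin quotients $\mathcal{Q}_K$ genuinely inherit the $\cstar$-structure in the parametrized setting. The factorization trick $f(F)=\phi(F)f(F)$ is what allows me to avoid doing functional calculus for a jointly continuous $f$ on $X\times\bR$ directly, by concentrating all the $\cstar$-algebraic input in the scalar computation producing $\phi(F)$.
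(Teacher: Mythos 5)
Your argument is correct in substance, and for the harder (``only if'') direction it takes a genuinely different route from the paper. The paper outsources the key step to an external reference: it quotes the characterization ``$F$ is Fredholm iff there are $\eps:X\to(0,\infty)$ and a compact family $K$ with $F^2+K\geq \eps(x)^2$'' (by analogy with a lemma of an earlier paper) and then reads off the essential gap from $h(F^2+K)=0$ plus a Stone--Weierstrass argument to pass from even functions to all functions. You instead localize and pass to a parametrized Calkin algebra $\mathcal{Q}_K=\Lin_{\gA,K}(H|_K)/\Kom_{\gA,K}(H|_K)$, use invertibility of the self-adjoint class $[F|_K]$ to get a scalar gap $c_K>0$, and then use the factorization $f(F)=\phi(F)f(F)$ together with the ideal property to promote the scalar statement to arbitrary jointly continuous $f$. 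This is self-contained given what the paper already establishes (that $\Lin_{\gA,K}$ is a $\cstar$-algebra for compact $K$ and that $\Kom_{\gA,K}$ is a closed two-sided $*$-ideal), and it cleanly separates the $\cstar$-algebraic input (the single function $\phi$) from the parametrized functional calculus. Your ``if'' direction --- the parametrix $g(F)$ with $g_x(t)=(1-\chi_x(t))/t$, justified by Lemma \ref{lem:functional-caluclus-with-varying-fct} --- is exactly what the paper's one-line converse intends.

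One point needs repair: you repeatedly choose a \emph{compact} neighborhood $K\ni x_0$, but the lemma only assumes $X$ paracompact, and paracompact spaces need not be locally compact (this generality is used later, for $K$-theory of paracompact pairs). The fix is routine: replace $K$ by a closed neighborhood on which $\norm{F_x}$, the norm of a global parametrix $G_x$, and the norms of the compact error terms are all bounded (possible since these functions are locally bounded resp.\ continuous), and work in the $\cstar$-algebra of \emph{globally bounded} adjointable families on $K$ modulo its closed ideal of globally bounded compact families; completeness of this algebra follows from Lemma \ref{convergence-of-operators}. With that substitution, the rest of your argument (lower semicontinuity of $c_0$, selection of a continuous $c<c_0$ by a partition of unity, and locality of compactness) goes through unchanged.
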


Here we use functional calculus for functions $f: X \times \bR \to \gC$ to define $f(F)$, see \ref{lem:functional-caluclus-with-varying-fct} below.

\begin{proof}
By an argument completely analogous to that for \cite[Lemma 2.16]{JEInddiff}, $F$ is Fredholm if and only if there is a function $\eps: X \to (0, \infty)$ and a compact operator family $K$ such that $F^2 + K \geq \eps (x)^2$. If $h: X \times \bR \to \gC$ satisfies $\supp(h_x)\subset (-\eps(x)^2, \eps(x)^2)$, then $h(F^2+K)=0$, but $h(F^2) -h^2 (F^2+K)$ is compact. Therefore, for each even function $f$ with $\supp(f_x) \subset (-\eps(x),\eps(x))$, $f(F)$ is compact. By Stone-Weierstrass, the result follows for all such $f$.
Conversely, if $F$ has an essential spectral gap, one can built a parametrix of the form $f(F)$, using Lemma \ref{lem:functional-caluclus-with-varying-fct}.
\end{proof}

\subsubsection{The space of sections as a Hilbert module}
Assume that $X$ is a compact Hausdorff space, $\gA$ a $\cstar$-algebra and $(E,\Gamma)$ is a continuous field of Hilbert $\gA$-modules. 
Then $\Gamma$ is an $\gA(X)$-module, and it is in fact an $\gA(X)$-Hilbert module. Namely, if $s,t \in \Gamma$ are two sections, then the function $x\mapsto \scpr{s(x),t(x)}$ is a continuous function $X\to \gA$. This defines an $\gA(X)$-valued inner product on $\Gamma$, which induces a norm on $\Gamma$. Note that $\norm{s} = \sup_{x\in X} \norm{s(x)}$. The fourth axiom of a continuous field implies that $\Gamma$ is complete, in other words, a Hilbert $\gA(X)$-module. 

\begin{lem}\label{lem:operator-families-vs-adjointabkes}
Let $F:(E_0,\Gamma_0) \to (E_1,\Gamma_1)$ be an adjointable bounded operator family. Then the induced $\gA(X)$-linear map $F: \Gamma_0 \to \Gamma_1$ is adjointable. This defines a map
\[
\Lin_{\gA,X} (E_0,E_1) \to \Lin_{\gA(X)} (\Gamma_0,\Gamma_1),
\]
which is bijective, preserves adjoints and the compact operators.
\end{lem}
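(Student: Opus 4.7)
The plan is to write down the inverse map $\Phi \mapsto (F_x)_{x\in X}$ explicitly by a locality argument, and then check preservation of adjoints and of compacts separately. The forward direction is routine: given an adjointable family $F$, the formula $(Fs)(x):=F_x s(x)$ defines a map $\Gamma_0\to\Gamma_1$ by the very definition of a bounded operator family, is $\gA(X)$-linear because each $F_x$ is $\gA$-linear, and has norm bounded by $\sup_{x\in X}\norm{F_x}<\infty$ (finite since $X$ is compact). The adjoint family $(F_x^{*})_{x\in X}$ yields a section-level operator $\Phi^{*}:\Gamma_1\to\Gamma_0$ by the same formula, and the pointwise identity
\[
\scpr{Fs,t}_{\Gamma_1}(x)=\scpr{F_x s(x),t(x)}_{\gA}=\scpr{s(x),F_x^{*}t(x)}_{\gA}=\scpr{s,F^{*}t}_{\Gamma_0}(x)
\]
shows both that the induced operator is adjointable and that the assignment respects adjoints.

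Injectivity is immediate from axiom (2) of Definition~\ref{defn:cont-field}: if $Fs=F's$ for all $s\in\Gamma_0$, then $F_x s(x)=F'_x s(x)$ for each $x$ and each $s$, and density of $\{s(x)\mid s\in\Gamma_0\}$ in $(E_0)_x$ forces $F_x=F'_x$. For surjectivity, fix an adjointable $\gA(X)$-linear $\Phi:\Gamma_0\to\Gamma_1$, and first establish the key locality statement: if $s(x_0)=0$, then $(\Phi s)(x_0)=0$. By axiom (3) of a Banach field one can find a neighborhood $U\ni x_0$ where $\norm{s(y)}<\eps$, and a scalar bump $\varphi\in C(X,\bC)\subset\gA(X)$ with $\varphi(x_0)=1$ and $\supp\varphi\subset U$; then $\norm{\varphi s}_{\Gamma_0}<\eps$, so $\norm{(\Phi s)(x_0)}=\norm{(\Phi(\varphi s))(x_0)}\le\norm{\Phi}\eps$, and letting $\eps\to 0$ proves the claim. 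Locality legitimates the definition $F_x u:=(\Phi s)(x)$ for any $s\in\Gamma_0$ with $s(x)=u$; a similar bump-function argument, combined with axiom (3) to choose $s$ whose global sup-norm is close to $\norm{u}$, shows $\norm{F_x u}\le\norm{\Phi}\norm{u}$. Applying the same construction to $\Phi^{*}$ supplies the pointwise adjoint family, so $(F_x)_{x\in X}$ is an adjointable operator family whose induced section-level operator is $\Phi$.

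For compacts, the rank-one family $\theta_{s,t}$ with $s\in\Gamma_0$, $t\in\Gamma_1$ acts on sections by $u\mapsto \scpr{u,s}_{\gA(X)} t$, which is exactly the Hilbert-$\gA(X)$-module rank-one operator $\Gamma_0\to\Gamma_1$; finite linear combinations correspond accordingly. The remaining subtlety is reconciling \emph{local} uniform approximation (Definition~\ref{defn:compact-op-hilbertfield}) with \emph{global} norm approximation in $\Lin_{\gA(X)}(\Gamma_0,\Gamma_1)$, and this is where compactness of $X$ enters: given a local finite-rank approximation $(U_x,G_x)$ of $F$ at each $x$ to within $\eps$, extract a finite subcover $(U_i)_{i=1}^{n}$ with choices $G_i=\sum_k\theta_{s_{i,k},t_{i,k}}$ and a subordinate partition of unity $(\varphi_i)$, and form $G:=\sum_i\sum_k\theta_{s_{i,k},\varphi_i t_{i,k}}$, which is again a finite linear combination of rank-one families and satisfies $\sup_{x\in X}\norm{F_x-G_x}\le\eps$. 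The main nuisance, though not a conceptual obstacle, is this partition-of-unity patching together with the sup-norm control on $s$ in the surjectivity step; both lean on axiom (3) of a continuous field.
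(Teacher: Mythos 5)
Your proof is correct and follows essentially the same route as the paper: the paper's inverse map is defined via the localization isomorphism $\Gamma_0\otimes_{\gA(X),x}\gA\cong (E_0)_x$, which is exactly what your bump-function locality argument establishes, and the paper declares the compactness statement ``straightforward'' where you supply the partition-of-unity patching. No gaps; you have simply written out the details the paper omits.
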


\begin{proof}
It is straightforward to check that $F:\Gamma_0\to \Gamma_1$ is adjointable. We construct an inverse map $\Lin_{\gA(X)} (\Gamma_0,\Gamma_1)\to \Lin_{\gA,X} (E_0,E_1) $. We can reconstruct $(E_i)_x$ from $\Gamma_i$ and the $\gA(X)$-module structure, as follows. Let $\ev_x: \gA(X)\to \gA$ be the evaluation at $x$. Then $\Gamma_0 \otimes_{\gA(X),x} \gA \to (E_0)_x$, $s \otimes a \mapsto s(x)a$ is an isomorphism. Now we map $G\in \Lin_{\gA(X)} (\Gamma_0,\Gamma_1)$ to the family of operators $E_0 \to E_1$ determined by $G_x: (E_0)_x \to (E_1)_x$, $G_x (s(x) \otimes a) := (Gs)(x) \otimes a$. By construction, $G(\Gamma_0) \subset \Gamma_1$, and the operator norms of $G_x$ are locally bounded (even globally). 

It is straightforward to prove that under this bijection, compact operators correspond to compact operator families.
\end{proof}

\begin{prop}\label{prop:hilbert-fields-vs-hilbert-modules}
Let $X$ be a compact Hausdorff space and let $\gA$ be a $\cstar$-algebra. Then each Hilbert-$\gA(X)$-module is isomorphic to one of the form $\Gamma $, where $(E,\Gamma)$ is a continuous field of Hilbert-$\gA$-modules on $X$.
\end{prop}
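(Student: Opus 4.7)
The plan is to reconstruct the continuous field of Hilbert $\gA$-modules from a given Hilbert $\gA(X)$-module $\cH$ by a fiberwise localization construction, in the spirit of $C(X)$-algebra theory.

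For each $x \in X$, let $I_x := \{f \in \gA(X) \mid f(x) = 0\}$, a closed two-sided ideal. The submodule $\cH \cdot I_x$ has closure $N_x := \overline{\cH \cdot I_x} \subset \cH$, and I define the fiber
\[
E_x := \cH / N_x,
\]
equipped with the right $\gA$-action coming from the isomorphism $\gA(X)/I_x \cong \gA$. I equip $E_x$ with the $\gA$-valued inner product $\scpr{[s],[t]}_x := \scpr{s,t}(x)$, which is well-defined: if $u = v \cdot f$ with $f \in I_x$, then $\scpr{u,t}(x) = f(x)^* \scpr{v,t}(x) = 0$, and continuity of $y \mapsto \scpr{u,t}(y)$ extends this to all of $N_x$. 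Let $\Lambda \subset \prod_x E_x$ be the image of $\cH$ under the map $s \mapsto ([s]_x)_x$.

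The technical heart of the argument is to show that the norm induced on $E_x$ by $\scpr{\cdot,\cdot}_x$ coincides with the quotient norm; equivalently, that
\[
\inf_{u \in N_x} \norm{s - u}_{\cH} \;=\; \norm{\scpr{s,s}(x)}_{\gA}^{1/2} \qquad \text{for all } s \in \cH.
\]
The inequality ``$\geq$'' is immediate because $\scpr{s-u,s-u}(x) = \scpr{s,s}(x)$ for $u \in N_x$ and $\norm{s-u}_{\cH} = \sup_y \norm{\scpr{s-u,s-u}(y)}^{1/2}$. For ``$\leq$'', let $c := \norm{\scpr{s,s}(x)}$; given $\eps > 0$, use continuity of $y \mapsto \norm{\scpr{s,s}(y)}$ to pick $\phi \in C(X)$, $0 \leq \phi \leq 1$, with $\phi(x) = 1$ and $\phi$ supported in the set where $\norm{\scpr{s,s}(y)} < c + \eps$. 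Then $(1-\phi)\cdot 1_{\gA} \in I_x$, so $(1-\phi)s \in \cH \cdot I_x$, and $\norm{s - (1-\phi)s}_{\cH}^2 = \sup_y \phi(y)^2 \norm{\scpr{s,s}(y)} \leq c + \eps$.

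With this identification in hand I verify the axioms of Definition \ref{defn:continuous-field-hilbmods}. Axiom (1) is trivial since $\cH$ is a $\gA(X)$-module and $C(X) \subset \gA(X)$. Axiom (2) holds since $\cH \to E_x$ is surjective by construction. For axiom (3), $y \mapsto \norm{s(y)} = \norm{\scpr{s,s}(y)}^{1/2}$ is continuous because $\scpr{s,s} \in \gA(X)$. For axiom (4), given $t \in \prod_x E_x$ locally uniformly approximated by elements of $\Lambda$, I produce a Cauchy sequence in $\cH$ converging to a preimage of $t$: for each $n$ cover $X$ by finitely many opens $U_i$ with sections $s_i^{(n)} \in \cH$ satisfying $\sup_{y \in U_i}\norm{t(y) - s_i^{(n)}(y)} \leq 2^{-n}$, glue via a subordinate partition of unity $\{\phi_i\} \subset C(X)$ to form $S_n := \sum_i \phi_i s_i^{(n)} \in \cH$; using the norm identification the fiberwise estimate $\norm{t(y) - S_n(y)} \leq 2^{-n}$ translates, via the sup-norm $\norm{\cdot}_{\cH}$, into a Cauchy condition on $(S_n)$ whose limit $S \in \cH$ satisfies $S(x) = t(x)$ for all $x$, so $t \in \Lambda$.

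Finally, the natural map $\Psi: \cH \to \Lambda$ preserves the $\gA(X)$-valued inner product by design (both sides send $(s,t)$ to $y \mapsto \scpr{s,t}(y)$), hence is isometric and injective; surjectivity is immediate since $\Lambda$ is defined as its image. Thus $(E,\Lambda,\scpr{\cdot,\cdot})$ is the desired continuous field and $\cH \cong \Lambda$ as Hilbert $\gA(X)$-modules. The main obstacle is the quotient-norm identification, which is where the $\cstar$-structure and the interplay of the $C(X)$- and $\gA$-actions really enter; everything else is a routine check once this is granted.
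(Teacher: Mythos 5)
Your proof is correct and follows essentially the same route as the paper: define the fibres as quotients $E_x = \cH/N_x$, establish that the quotient norm agrees with the norm induced by the localized inner product via a Urysohn-function argument, and deduce the continuous-field axioms from completeness of $\cH$. The only superficial difference is that you take $N_x = \overline{\cH\cdot I_x}$ rather than the kernel of $\scpr{\cdot,\cdot}_x$ as in the paper, but your norm identity shows these coincide (and in particular gives definiteness of the inner product on $E_x$), so the arguments match.
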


\begin{proof}
Let $H$ be a Hilbert-$\gA(X)$-module. For each $x \in X$, we obtain a $\gA$-valued inner product $\scpr{v,w}_x := \scpr{v,w}(x) \in \gA$, which is of course only positive semidefinite. Let $N_x \subset H$ be the kernel of $\scpr{ \_,\_ }_x$, i.e. the subspace of all $v \in H$ with $\scpr{v,v}(x)=0$. This is a closed $\gA(X)$-submodule. Then $E_x:= H/N_x$ is an $\gA(X)$-module, but it is in fact an $\gA$-module: if $f:X \to \gA$ satisfies $f(x)=0$, then $fv \in N_x$ for each $v \in H$. Therefore $E_x$ is a pre-Hilbert-$\gA$-module. Once we can show that $E_x$ is complete, the rest of the proof is easy, by the following argument. Since the inner product on $H$ is positive definite, we have $\cap_{x \in X} N_x =0$, and hence the map $H \to  \prod_{x \in X} E_x$, $v \mapsto (x \mapsto v+N_x)$ is injective. We identify $H$ with its image in $\prod_{x \in X} E_x$, and it is straightforward to see that $((E_x)_{x\in X}, H)$ is a continuous field of Hilbert-$\gA$-modules (the fourth property follows from 
the completeness of $H$).

To argue that $E_x$ is complete, let $\norm{\_}_0$ be the norm on $E_x$ induced by the $\gA$-valued inner product $\scpr{\_,\_}_x$ and let $\norm{\_}_1$ be the quotient norm on $E_x = H/N_x$ induced by the norm on $H$. We claim that $\norm{\_}_0 = \norm{\_}_1$. Since $N_x$ is closed, $(E_x, \norm{\_}_1)$ is complete, and thus $(E_x, \norm{\_}_0)$ is complete as well. To prove the equality of the norms, let $v \in H$. Then
\[
 \norm{v+N_x}_1^2 := \inf_{w \in N_x} \norm{v+w}^2 = \inf_{w \in N_x} \norm{\scpr{v+w,v+w}}_{\gA (X)} = \inf_{w \in N_x} \sup_{y \in X} \norm{\scpr{v+w,v+w}(y)}_{\gA}
\]
and
\[
\norm{v+N_x}_0^2 := \norm{\scpr{v,v}(x)}_{\gA} = \inf_{w \in N_x} \norm{\scpr{v+w,v+w}(x)}_{\gA} \leq \norm{v+N_x}_1^2
\]
(since $N_x$ is the kernel of $\scpr{\_,\_}_x$, we get $\scpr{v+w,v+w}=\scpr{v,v}$). To prove the reverse inequality, let $x \in X$ and $\epsilon>0$. There is a neighborhood $U$ of $x$ such that $\norm{\scpr{v,v}(y)}_{\gA} \leq \norm{\scpr{v,v}(x)}_{\gA} + \epsilon$ for all $y \in U$. Let $f: X \to [0,1]$ be supported in $U$ and $f(x)=1$ (Urysohn's Lemma). Then $(f-1)v \in N_x$. Therefore 
\[
\inf_{w \in N_x} \sup_{y \in X} \norm{\scpr{v+w,v+w}(y)}_{\gA} \leq \sup_{y \in X} \norm{\scpr{v+(f-1)v,v+(f-1)v}(y)}_{\gA} = \sup_{y \in X} \norm{\scpr{fv,fv}(y)}_{\gA} = 
\]
\[
= \sup_{y \in X} f(y)^2 \norm{\scpr{v,v}(y)}_{\gA} \leq   \norm{\scpr{v,v}(x)}_{\gA}+\epsilon = \norm{v+N_x}_0^2  + \epsilon.
\]
Since $\epsilon$ was arbitrary, the proof is complete.
\end{proof}

We will use this translation to compare our definition of $K$-theory with the usual one, through Kasparov theory. In our opinion, such a translation, while perfectly suitable from the viewpoint of the theory of operator algebras, makes the treatment of families of differential operators less transparent (and it only works nicely over compact base spaces). It is simpler to consider $X$ purely as a parameter space.

\subsection{Families of unbounded operators on continuous fields of Hilbert-\texorpdfstring{$\gA$}--modules}\label{subsec:families-unbounded.hilbfields}

\subsubsection{Unbounded operator families}

As the reader might already suspect, Definition \ref{defn-homomorphism} is not general enough to support the investigation of families of differential operators. We have to deal with families of unbounded operators as well. 

\begin{defn}\label{defn:unbounded-operatorfamily}
Let $(H, \Gamma)$ be a continuous field of Hilbert $\gA$-modules on the topological space $X$ and let $\Delta \subset \Gamma$ be a total subset which is at the same time a $\gA$-submodule. Assume that $W=(W_x)_{x\in X}$ is a family of $\gA$-submodules $W_x \subset H_x$ so that $\Delta \subset \prod_x W_x$. Because $\Delta $ is total, $W_x \subset H_x$ is a dense submodule. 
A family $(D_x)$ of $\gA$-linear maps $D_x: W_x \to H_x$ is called \emph{densely defined operator family with domain} $(W, \Delta)$ if $Ds \in \Gamma$ holds for all $s \in \Delta$.
We say that $D$ is \emph{symmetric} if for all sections $s ,t \in \Delta$, the identity $\scpr{Ds,t}=\scpr{s,Dt}$ holds.
\end{defn}

Note that $D_x$ is an unbounded operator with domain $W_x$, and symmetry of $D$ amounts to saying that each $D_x$ is symmetric. For each $s,t \in \Delta$, the function $x \mapsto \scpr{s(x),t(x)}_{D_x}:= \scpr{u,v}+ \scpr{D_x u,D_x v}$ is continuous, since by assumption $s,t,Ds, Dt \in \Gamma$. With the graph scalar product, the family $(W,\Delta)$ is a continuous field of pre-Hilbert $\gA$-modules on its own right, called the \emph{graph} $\gra(D)$ of $D$. 

\begin{defn}\label{defn:closed-family-unboundedop}
A densely defined symmetric operator family $D$ on $(H,\Gamma)$ with domain $(W,\Delta)$ is \emph{closed} if $(W,\Delta)$ with the graph scalar product is complete.
\end{defn}

Occasionally, we will denote $\dom (D):=(W, \Delta)$.
If $D:(W, \Delta)\to (H, \Gamma)$ is a densely defined symmetric operator family, we might form the completion of $(W, \Delta)$ with respect to the graph norm, in the sense of Lemma \ref{completion-of-prehilbfields}.
Since obviously $\norm{D_x u} \leq \norm{u}_{D_x}$, $D$ induces a (bounded!) operator family $\overline{D}: \overline{(W,\Delta)} \to (H, \Gamma)$, also by Lemma \ref{completion-of-prehilbfields}. We call $\overline{D}$ the \emph{closure} of the initial $D$. Note that $\overline{D}_x$ is the closure of the individual operator $D_x$, by construction.
Also, the inclusion $(W, \Delta) \to (H, \Gamma)$ is continuous when $(W,\Delta)$ has the graph norm, so the inclusion extends to a bounded operator family $\iota: \overline{(W, \Delta)} \to (H, \Gamma)$. Note that $\iota (\overline{\Delta})\subset \Gamma$ is a total subspace.
As in the classical theory, one needs to prove that the closure can be interpreted as a densely defined operator family.

\begin{lem}\label{lem:closure-offamily}
Let $D: (W, \Delta) \to (H,\Gamma)$ be a symmetric and densely defined operator family. Then the inclusion map $\iota:\overline{(W, \Delta)}\to (H,\Gamma)$ is injective in the sense that each $\iota_x: \overline{W_x} \to H_x$ is injective. 
\end{lem}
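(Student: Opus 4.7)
The claim is fiberwise: for each fixed $x\in X$ I would reduce the statement to the assertion that the single symmetric densely defined operator $D_x$ on the Hilbert $\gA$-module $H_x$ is closeable in the sense of \S\ref{subsec:unboundedops-hilbmod}. Indeed, by Lemma \ref{completion-of-prehilbfields} the fiber at $x$ of the completed field $\overline{(W,\Delta)}$ is the Banach space completion of $W_x$ in the graph norm $\norm{\_}_{D_x}$, and $\iota_x$ is the bounded extension to this completion of the contractive inclusion $W_x\hookrightarrow H_x$. Injectivity of $\iota_x$ is then exactly the statement that no graph-Cauchy sequence in $W_x$ can converge to $0$ in $H_x$ while its image under $D_x$ converges to a non-zero element; that is, $D_x$ is closeable.

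This fiberwise closeability is \cite[Lemma 2.1]{KL}, which is already cited in the text. The proof is short enough that I would reproduce it for the reader: let $\Delta_x:=\{s(x)\mid s\in\Delta\}\subset W_x$. Since $\Delta$ is an $\gA$-submodule of $\Gamma$, $\Delta_x$ is an $\gA$-submodule of $W_x$, and it is dense in $W_x$ in the graph norm (this being built into the pre-field structure on $(W,\Delta)$ asserted just after Definition \ref{defn:unbounded-operatorfamily}). Thus every class in $\overline{W_x}$ is represented by a graph-Cauchy sequence $(u_n)\subset\Delta_x$. Suppose $u_n\to 0$ in $H_x$ and $D_x u_n\to v$ in $H_x$. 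For an arbitrary $w\in\Delta_x$ choose sections $s_n,t\in\Delta$ with $s_n(x)=u_n$ and $t(x)=w$; evaluating the section-level identity $\scpr{Ds_n,t}=\scpr{s_n,Dt}$ at $x$ yields $\scpr{D_x u_n,w}_x=\scpr{u_n,D_x w}_x$, whence
\[
\scpr{v,w}_x=\lim_n\scpr{D_x u_n,w}_x=\lim_n\scpr{u_n,D_x w}_x=0.
\]
Density of $\Delta_x$ in $H_x$, continuity of the $\gA$-valued inner product, and positive-definiteness then force $\scpr{v,v}_x=0$ and hence $v=0$, as required.

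I do not foresee any real obstacle. The continuous-field machinery enters only to identify $\bigl(\overline{(W,\Delta)}\bigr)_x$ with the graph-norm completion of $W_x$ and to translate the section-level symmetry of $D$ into the pointwise symmetry of each $D_x$ on $\Delta_x$; the computation itself makes no use of the parameter space $X$ and is a direct transcription of the classical argument that symmetric operators on a Hilbert space are closeable.
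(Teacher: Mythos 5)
Your proof is correct and follows the same route as the paper: the statement is reduced fibrewise to the closeability of the symmetric operator $D_x$ on the Hilbert module $H_x$, which the paper simply quotes as \cite[Lemma 2.1]{KL}. The only difference is that you also transcribe the classical closeability argument (using symmetry on $\Delta_x$ and totality of $\Delta$), which is a harmless elaboration.
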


In particular, we can consider the closure $\overline{D}$ as a densely defined family of symmetric operators.

\begin{proof}
Symmetric operators on individual Hilbert modules are closeable with symmetric closure \cite[Lemma 2.1]{KL}, so it follows that $\overline{W_x} \to H_x$ is injective, for each $x$. This trivially implies that $\Delta \to \Gamma$ is injective as well.
\end{proof}

The following simple example shows that Definition \ref{defn:unbounded-operatorfamily} provides a nontrivial extension of the notion of a homomorphism even if all $D_x$ are bounded operators. 

\begin{example}\label{example:unbounded-family-of-bounded-ops}
Let $X=[0,1]$ and let $H_x =\bC$ for $x>0$ and $H_0=0$. Let $\Gamma :=\{s: [0,1] \to \bC \vert s(0)=0\}$. Then $(H, \Gamma)$ is a continuous field of Hilbert spaces. Let $D_x := \frac{1}{x}$, for $x>0$. Let $\dom (D):= \{ s \in \Gamma \vert \lim_{x \to 0} \frac{s(x)}{x} =0\}$. Then $(D_x)$ is a closed symmetric unbounded operator family, even if all $D_x$ are bounded (and of course $\dom (D_x)= H_x$ holds for all $x$). 
\end{example}

\subsubsection{Self-adjoint operator families}

We do not try to define the adjoint of an unbounded operator family. Instead, we directly define the notion of self-adjointness. 

\begin{defn}\label{defn-selfadjoint}
Let $(H, \Gamma)$ be a continuous field of Hilbert $\gA$-modules over the topological space $X$. A densely defined closed symmetric operator family $D: (W, \Delta) \to (H, \Gamma)$ is called \emph{self-adjoint} if each of the operators $D_x: W_x \to H_x$ is self-adjoint and regular. 
\end{defn}

The rationale behind Definition \ref{defn-selfadjoint} is provided by the Theorem \ref{thm:kaadlesch}.
Note that one could have called this property \emph{self-adjoint and regular}. However, to ease language we dropped the word ``regular''. There is little risk of confusion, because our only source of self-adjoint operators on Hilbert modules is Theorem \ref{chernoff-theorem}, which provides operators that are self-adjoint and regular at the same time. Now we discuss the example of a self-adjoint operator family which is most relevant to us.

\begin{defn}\label{defn:coercive-function-family}
Let $\pi: M \to X$ be a submersion. A function $f:M \to N$ to another manifold is called \emph{fibrewise proper} if $(\pi,f): M \to X \times N$ is proper. A fibrewise proper function $f: M \to \bR$ which is bounded from below is called a \emph{coercive function}.
\end{defn}

\begin{impexample}\label{self-adjointness-dirac-family}
Let $\pi: M \to X$ be a submersion and let $g$ be a fibrewise Riemannian metric on $M$. Let $V \to M$ be a smooth bundle of finitely generated projective $\gA$-modules and consider the continuous field $L^2_{\pi} (M;V)$ constructed in (\ref{example:basic-field-of-hilbert-modules}). Let $D: \Gamma_{cv} (M;V) \to \Gamma_{cv} (M;V)$ be a formally self-adjoint differential operator of order $1$. Then $D$ induces a family of symmetric densely defined unbounded operators on $L^2_{\pi} (M;V)$, with initial domain $\dom (D)= \Gamma_{cv} (M;V)$ and $\dom (D_x):= \Gamma_{c} (M_x;V_x)$. 
We assume that there is a smooth coercive function $f: M \to \bR$ such that the commutator $[D,f]$ is \emph{locally bounded in $X$}. Let us explain what we mean by this expression. Note that $[D,f]$ is a bundle endomorphism of $V$. It is locally bounded in $X$ if there is a continuous function $C: X \to (0,\infty)$ such that for each $p \in M$, we have $\norm{[D,f] (p)} \leq C (\pi(p))$. 

Under these assumptions, the restriction $f_x:=f|_{M_x}$ is a coercive function on $M_x$, and $[D_x,f_x]$ is bounded. Therefore, by Theorem \ref{chernoff-theorem}, the closure of $D_x$ is self-adjoint and regular. So the closure of the operator family $(D_x)_x$ is an example of a self-adjoint unbounded operator family on the continuous field $L^2_X (M;V)$ of Hilbert-$\gA$-modules.
\end{impexample}

The reason why Definition \ref{defn-selfadjoint} gives a working theory is the following global version of Proposition \ref{spectral-criterion}.

\begin{prop}\label{spectral-criterion-global}
Let $D: (W, \Delta) \to (H,\Gamma)$ be a closed symmetric densely defined operator family. The following are equivalent:
\begin{enumerate}
\item $D$ is self-adjoint in the sense of Definition \ref{defn-selfadjoint}.
\item There is $\lambda \in \bC \setminus \bR$ such that the operator families $(D\pm \lambda): (W,\Delta) \to (H,\Gamma)$ are invertible.
\item For all $\lambda \in \bC \setminus \bR$, the operator families $(D\pm \lambda): (W,\Delta) \to (H,\Gamma)$ are invertible.
\end{enumerate}
\end{prop}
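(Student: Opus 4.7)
The strategy is to reduce to the pointwise criterion of Proposition \ref{spectral-criterion} and then import the family structure via Lemma \ref{isomorphism-criterion}, using the coercivity estimate (\ref{coercivity}) to obtain the uniform bounds needed. The implication (3)$\Rightarrow$(2) is immediate, so only (1)$\Rightarrow$(3) and (2)$\Rightarrow$(1) require work.

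For (2)$\Rightarrow$(1), observe that if $(D\pm\lambda)\colon (W,\Delta)\to(H,\Gamma)$ are isomorphisms of continuous fields, then in particular each $(D_x\pm\lambda)\colon W_x\to H_x$ is a bijection, so Proposition \ref{spectral-criterion} (with the given $\lambda$) yields that $D_x$ is self-adjoint and regular for every $x$. This is Definition \ref{defn-selfadjoint}, so we are done.

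For (1)$\Rightarrow$(3), the first step is to verify that $D\pm\lambda$ really is a bounded operator family from $(W,\Delta)$ (equipped with the graph inner product) to $(H,\Gamma)$. Here I use that $D$ is closed, so $(W,\Delta)$ is a continuous field of Hilbert-$\gA$-modules. The inclusion $\iota\colon(W,\Delta)\to(H,\Gamma)$ is a bounded operator family (norm at most $1$ fibrewise by (\ref{estimate-unbounded-op2}), and it sends $\Delta$ into $\Gamma$ by hypothesis), and $D$ itself is bounded from the graph field to $(H,\Gamma)$ with $\norm{D_x u}\leq\norm{u}_{D_x}$ and $D(\Delta)\subset\Gamma$ by Definition \ref{defn:unbounded-operatorfamily}. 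Hence $D+\lambda\iota$ is a bounded operator family, and analogously for $D-\lambda$. By Proposition \ref{spectral-criterion}, each $(D_x\pm\lambda)$ is invertible, so condition (1) of Lemma \ref{isomorphism-criterion} holds.

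The main step is condition (2) of Lemma \ref{isomorphism-criterion}, i.e.\ a local bound on $\norm{(D_x\pm\lambda)^{-1}}$ as operators $H_x\to W_x$ with the graph norm on the target. Writing $v=(D_x+\lambda)^{-1}u$, the coercivity estimate (\ref{coercivity}) gives
\[
\norm{v}\leq\frac{1}{|\Im\lambda|}\norm{u},
\]
and since $D_x v = u-\lambda v$, we obtain $\norm{D_x v}\leq\norm{u}+|\lambda|\norm{v}\leq(1+\tfrac{|\lambda|}{|\Im\lambda|})\norm{u}$. Combining these with (\ref{estimate-unbounded-op1}) yields
\[
\norm{v}_{D_x}^{2}\leq\norm{v}^{2}+\norm{D_x v}^{2}\leq C(\lambda)^{2}\norm{u}^{2}
\]
for a constant $C(\lambda)$ depending only on $\lambda$, not on $x$. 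So $\norm{(D_x+\lambda)^{-1}}_{H_x\to W_x}$ is globally (hence locally) bounded, and Lemma \ref{isomorphism-criterion} applies to conclude that $D+\lambda$ is an isomorphism of continuous fields. The argument for $D-\lambda$ is identical.

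The only genuine point to watch is the interplay between the graph field and $(H,\Gamma)$: one must be consistent about which field the operators act between when invoking Lemma \ref{isomorphism-criterion}. No other obstacle arises, since the coercivity estimate (\ref{coercivity}) already gives the required uniformity in $x$ for free.
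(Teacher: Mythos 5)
Your proposal is correct and follows essentially the same route as the paper: reduce to the pointwise criterion of Proposition \ref{spectral-criterion} for (2)$\Rightarrow$(1), and for (1)$\Rightarrow$(3) apply Lemma \ref{isomorphism-criterion} after establishing a uniform (in $x$) bound on $\norm{(D_x\pm\lambda)^{-1}}$ from $H_x$ to the graph norm. The only cosmetic difference is that you bound $\norm{D_x v}$ via the identity $D_x v=u-\lambda v$ together with (\ref{coercivity}), whereas the paper uses the functional-calculus estimate $\sup_t|t/(t+\lambda)|$; both yield the same uniform constant.
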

\begin{proof}
\impl{3}{2} trivial. \impl{2}{1} for each $x\in X$, $D_x \pm \lambda$ is invertible, and Proposition \ref{spectral-criterion} proves that $D_x$ is self-adjoint, for all $x$. \impl{1}{3} If each $D_x$ is self-adjoint, then $D_x+\lambda: W_x \to H_x$ is invertible by Proposition \ref{spectral-criterion}. We use \ref{isomorphism-criterion} to prove that $D+\lambda$ is an isomorphism $(W, \Delta)\to (H,\Gamma)$ of Banach fields. We have to show that the operator norms of $D_x +\lambda$ and $(D_x+\lambda)^{-1}$ are uniformly bounded in $x$ (here $W_x$ has the graph norm, as usual). It is clear that
\[
\norm{(D_x+\lambda)u}  \leq (1+|\lambda|) \norm{u}_{D_x}
\]
and the operator norm of $(D_x+\lambda)^{-1}$ can be estimated by
\[
\norm{(D_x+\lambda)^{-1} u}_{D_x} \leq \norm{(D_x+\lambda)^{-1} Du} + \norm{(D_x+\lambda)^{-1} u} \leq 
\]
\[
\sup_{t \in \bR} |\frac{t}{t+\lambda}| \norm{u} + \frac{1}{|\Im (\lambda)|}\norm{u},
\]
using (\ref{coercivity}).
\end{proof}

\subsection{Spectral theory and functional calculus for operator families}\label{subsec:spectraltheoryglobal}

We now extend the functional calculus of Theorem \ref{thm:kucerovsky} to the setting of families of self-adjoint operators. 

\begin{thm}[Functional calculus for operator families]\label{thm:global-functional-calculus}
Let $(H, \Gamma)$ be a continuous field of Hilbert $\gA$-modules on $X$ and let $D$ be a family of self-adjoint operators on $H$. For $f \in \gC(\overline{\bR})$, the collection $(f(D_x))_x$ of bounded operators $H_x \to H_x$ (defined by applying the functional calculus from Theorem \ref{thm:kucerovsky} pointwise) is a bounded operator family. 
\end{thm}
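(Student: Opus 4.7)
The plan is to define
\[
\mathcal{B} := \{f \in \gC(\overline{\bR}) : (f(D_x))_x \text{ is a bounded operator family on } (H,\Gamma)\}
\]
and prove $\mathcal{B} = \gC(\overline{\bR})$ by a Stone--Weierstrass argument. The uniform estimate $\norm{f(D_x)} \leq \norm{f}_{C^0}$ from Theorem \ref{thm:kucerovsky}(2) makes the local boundedness in Definition \ref{defn-homomorphism}(2) automatic for every $f$, so the content of membership in $\mathcal{B}$ is only the preservation of continuous sections. Since each $\Phi_{D_x}$ is a $*$-homomorphism, $\mathcal{B}$ is a unital $*$-subalgebra; and it is norm-closed, because $\norm{f_n(D_x)-f(D_x)} \leq \norm{f_n-f}_{C^0}$ converges to $0$ uniformly in $x$, allowing Lemma \ref{convergence-of-operators} to be applied to the fibrewise limit.

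Next, I would populate $\mathcal{B}$ with resolvents. For $\lambda \in \bC \setminus \bR$, Proposition \ref{spectral-criterion-global} says that $D-\lambda:(W,\Delta)\to(H,\Gamma)$ is an isomorphism of Banach fields; inverting and composing with the bounded inclusion $(W,\Delta)\hookrightarrow(H,\Gamma)$ exhibits $(D-\lambda)^{-1}$ as a bounded operator family on $(H,\Gamma)$, so $\frac{1}{t-\lambda}\in\mathcal{B}$. Combined with the subalgebra and closure properties, this already yields $\mathcal{B}\supseteq C_0(\bR)+\bC$.

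The subtlety is that the resolvents all vanish at both $\pm\infty$, so they fail to separate the two points of $\overline{\bR}\setminus\bR$, and we need at least one element of $\mathcal{B}$ distinguishing $+\infty$ from $-\infty$. The natural candidate is the bounded transform $\frac{t}{\sqrt{1+t^2}}$. Applying Proposition \ref{prop:positive-operator-of-order-two} fibrewise to the positive self-adjoint operator $1+D_x^2$ yields
\[
(1+D^2)^{-1/2} = \frac{2}{\pi}\int_0^\infty \bigl(D + i\sqrt{1+s^2}\bigr)^{-1}\bigl(D - i\sqrt{1+s^2}\bigr)^{-1}\, ds,
\]
whose integrand is a product of resolvent operator families with norm bounded by $(1+s^2)^{-1}$ uniformly in $x$. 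Hence Riemann sums converge in operator norm uniformly in $x$, and Lemma \ref{convergence-of-operators} places $(1+D^2)^{-1/2} \in \mathcal{B}$. By Theorem \ref{thm:kucerovsky}(4) each $(1+D_x^2)^{-1/2}$ maps $H_x$ into $\dom(D_x)$, and $D\circ(1+D^2)^{-1/2}=\frac{t}{\sqrt{1+t^2}}(D)$ fibrewise, so once $(1+D^2)^{-1/2}$ is lifted to a bounded operator family into the graph Banach field $\overline{(W,\Delta)}$ the composition with $D:\overline{(W,\Delta)}\to(H,\Gamma)$ is a bounded operator family on $(H,\Gamma)$. Once $\frac{t}{\sqrt{1+t^2}} \in \mathcal{B}$, the separating set $\{\tfrac{1}{t\pm i}, \tfrac{t}{\sqrt{1+t^2}}, 1\} \subset \mathcal{B}$ forces $\mathcal{B} = \gC(\overline{\bR})$ by Stone--Weierstrass on $\overline{\bR}$.

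The main obstacle is precisely the lifting step: $(1+D^2)^{-1/2}$ a priori only produces sections in $\prod_x W_x$, and continuity in the graph norm requires \emph{simultaneous} continuity of $(1+D^2)^{-1/2}s(x)$ and of $D_x(1+D^2)^{-1/2}s(x)$ as functions of $x$. My approach would be to prove in tandem, for any pair $f,g\in\gC(\overline{\bR})$ with $g(t)=tf(t)$, the two statements ``$f\in\mathcal{B}$'' and ``$f(D)$ lifts to a bounded operator family $(H,\Gamma)\to\overline{(W,\Delta)}$''; the integral representation handles both $f(t)=(1+t^2)^{-1/2}$ and $g(t)=\frac{t}{\sqrt{1+t^2}}$ uniformly, since the same absolutely norm-convergent Riemann-sum argument controls the graph norm once the factor $D$ is absorbed into the companion resolvent. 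This closes the circle and delivers $\mathcal{B}=\gC(\overline{\bR})$.
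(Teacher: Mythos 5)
Your first half coincides with the paper's proof: the set of good functions is a norm-closed unital subalgebra by Theorem \ref{thm:kucerovsky}(2) and Lemma \ref{convergence-of-operators}, it contains the resolvents $\frac{1}{t\pm i}$ by Proposition \ref{spectral-criterion-global}, and Stone--Weierstrass then gives $\gC_0(\bR)\subset\mathcal{B}$. You also correctly isolate the real difficulty, namely passing from $\gC_0(\bR)+\bC 1$ to all of $\gC(\overline{\bR})$, which requires one function separating $+\infty$ from $-\infty$. Your treatment of $(1+D^2)^{-1/2}$ itself is fine: the integrand of Proposition \ref{prop:positive-operator-of-order-two} has norm $\leq (1+s^2)^{-1}$ uniformly in $x$, so the Riemann-sum/tail argument and Lemma \ref{convergence-of-operators} do apply there.

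The gap is in the last step. The integral for $D(1+D^2)^{-1/2}$ is \emph{not} absolutely norm-convergent: by Theorem \ref{thm:kucerovsky}(6),
\[
\Bigl\| D\,(D^2+1+s^2)^{-1}\Bigr\| \;=\; \sup_{t\in\spec(D_x)}\frac{|t|}{t^2+1+s^2}\;=\;\frac{1}{2\sqrt{1+s^2}}
\]
whenever $\spec(D_x)$ is unbounded (the generic case for differential operators), and $\int_0^\infty(1+s^2)^{-1/2}\,ds=\infty$. Absorbing $D$ into a companion resolvent does not help, since $\norm{D(D+i\mu)^{-1}}\leq 1$ and $\norm{(D-i\mu)^{-1}}\leq\mu^{-1}$ reproduce the same non-integrable bound. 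Worse, the partial integrals $\frac{2}{\pi}\int_0^N DZ(s)\,ds=\Phi_{D_x}\bigl(\frac{2}{\pi}\tfrac{t}{\sqrt{1+t^2}}\arctan\tfrac{N}{\sqrt{1+t^2}}\bigr)$ converge to the bounded transform only strongly, not in operator norm (evaluate the symbol at $t=N$), so the hypothesis of Lemma \ref{convergence-of-operators} fails fibrewise and the ``tandem'' graph-norm lift cannot be completed this way. The paper avoids the bounded transform altogether: it takes compactly supported $0\leq h_n\leq 1$ with $h_n\to 1$ locally uniformly, observes via Theorem \ref{thm:kucerovsky}(3) that $\Lambda=\{h_n(D)s\}$ is a total subspace of $\Gamma$, notes $fh_n\in\gC_0(\bR)$ so $f(D)h_n(D)s=(fh_n)(D)s\in\Gamma$, and concludes by Corollary \ref{criterion-for-bounded-family}. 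Your argument can be repaired in the same spirit without the lifting step: since you have already shown $f(D)\in\mathcal{B}$ for $f(t)=(1+t^2)^{-1/2}$, Theorem \ref{thm:kucerovsky}(4) gives $g(D)s=f(D)(Ds)\in\Gamma$ for every $s$ in the total subspace $\Delta$ (where $Ds\in\Gamma$ by definition of an operator family), and Corollary \ref{criterion-for-bounded-family} then yields $g(t)=\tfrac{t}{\sqrt{1+t^2}}\in\mathcal{B}$.
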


\begin{proof}
It is clear that $\norm{f(D_x)} \leq \norm{f}$, so that the second condition of Definition \ref{defn-homomorphism} is satisfied. To prove that $f(D) (\Gamma) \subset \Gamma$ , let $\gI \subset \gC(\overline{\bR})$ be the subspace of functions $f$ such that $f(D)(\Gamma) \subset \Gamma$. This is a subalgebra of $\gC(\overline{\bR})$. If $f_n \in \gI$ is a sequence which converges uniformly to $f \in \gC (\overline{\bR})$, then $f\in \gI$. This is a straightforward consequence of \ref{convergence-of-operators} and the estimate $\norm{g(D_x)} \leq \norm{g}$ of Theorem \ref{thm:kucerovsky}. By Proposition \ref{spectral-criterion-global}, the functions $f_{\pm}(t)= \frac{1}{t\pm i}$ are in $\gI$, and so by the Stone-Weierstrass theorem $\gC_0 (\bR) \subset \gI$. 

Next, let $0\leq h_n\leq 1$ be a sequence of compactly supported functions which converges locally uniformly to $1$. Let $\Lambda: = \{ h_n (D) s \vert n \in \bN, s \in \Gamma\} \subset \Gamma$. By Theorem \ref{thm:kucerovsky} (3), $\Lambda$ is a total subset. If $f \in \gC(\overline{\bR})$, then $fh_n \in \gC_0(\bR)$. Therefore $f(D) h_n (D) s \in \Gamma$ for each for each $s \in \Gamma$. Therefore $f(D)(\Lambda) \subset \Gamma$, and by Corollary \ref{criterion-for-bounded-family}, it follows that $f(D) (\Gamma)\subset \Gamma$, as claimed. 
\end{proof}

It is sometimes useful to have a slightly more flexible version of the functional calculus, where the function $f$ is allowed to depend on $x \in X$ and not only on $t\in \overline{\bR}$. 

\begin{lem}\label{lem:functional-caluclus-with-varying-fct}
Let $(H,\Gamma)$ be a continuous field of Hilbert $\gA$-modules and let $D: (W, \Delta) \to (H,\Gamma)$ be a self-adjoint operator family. Let $f: X \times \bR \to \bC$ be a continuous function. For $x \in X$, set $f_x (t) := f(x,t)$. Assume that $f_x \in \gC (\overline{\bR})$ for all $x\in X$ and that $\norm{f_x}$ is locally bounded.
Then the family $f(D):=(f_x (D_x))_x$ is a bounded operator family $(H, \Gamma) \to (H,\Gamma)$. 
\end{lem}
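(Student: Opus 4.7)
The plan is to reduce to the $x$-independent functional calculus (Theorem \ref{thm:global-functional-calculus}) via a cutoff in the spectral variable $t$. The bound $\norm{f_x(D_x)}\leq \norm{f_x}_{\gC(\overline{\bR})}$ from Theorem \ref{thm:kucerovsky}(2), together with the assumed local boundedness of $\norm{f_x}$, immediately verifies condition (2) of Definition \ref{defn-homomorphism}; the remaining content is that $f(D)$ sends sections to sections. Fix $h_n\in \gC_c(\bR)$ with $0\leq h_n\leq 1$ and $h_n=1$ on $[-n,n]$, converging to $1$ locally uniformly, and set $v_n(x,t):=h_n(t)f(x,t)$. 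I will first show that $v_n(D)$ is a bounded operator family, and then pass to the limit $n\to\infty$.

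For the approximation step, work on an arbitrary compact $K\subset X$. The restriction $v_n|_{K\times \supp(h_n)}$ lives on a compact Hausdorff product, so the Stone--Weierstrass theorem applied to the tensor-product subalgebra gives, for each $\eps>0$, a finite sum $\sum_i \phi_i(x)g_i(t)$ with $\phi_i\in \gC(K)$, $g_i\in \gC(\supp(h_n))$, approximating $v_n$ to within $\eps$. Multiplying by $h_n$ and extending by zero yields $h_ng_i\in \gC_c(\bR)\subset \gC(\overline{\bR})$, and the estimate
\[
\norm{v_n(D_x)-\textstyle\sum_i\phi_i(x)(h_ng_i)(D_x)}\leq \norm{v_n(x,\cdot)-\textstyle\sum_i\phi_i(x)(h_ng_i)(\cdot)}_\infty\leq \eps
\]
holds uniformly in $x\in K$. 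Since each $(h_ng_i)(D)$ is a bounded operator family by Theorem \ref{thm:global-functional-calculus} and multiplication by a scalar $\phi_i\in \gC(K)$ preserves this property, Lemma \ref{convergence-of-operators} identifies $v_n(D)$ as a bounded operator family on $K$, and hence on $X$ since the condition is local.

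To pass to the limit, for $s\in\Gamma$ I need $v_n(D)s\to f(D)s$ locally uniformly, at which point Lemma \ref{convergence-of-sections} concludes $f(D)s\in\Gamma$. Using that $\Phi_{D_x}$ is a $*$-homomorphism, $v_n(D_x)s(x)-f_x(D_x)s(x)=f_x(D_x)(h_n(D_x)-1)s(x)$, whose norm is controlled by the locally bounded $\norm{f_x}$ times $\norm{(h_n(D_x)-1)s(x)}$. The task therefore reduces to the sub-lemma that $h_n(D_x)s(x)\to s(x)$ locally uniformly in $x$, and this is the main obstacle: the pointwise strong convergence of Theorem \ref{thm:kucerovsky}(3) is not a priori locally uniform. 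To upgrade it I use totality: by the proof of Theorem \ref{thm:global-functional-calculus}, $\Lambda:=\{h_m(D)t:m\in\bN,\, t\in\Gamma\}$ is a total subspace of $\Gamma$, so by Lemma \ref{completion-of-prehilbfields} there exist a neighborhood $U$ of any given $x_0$ and $s'=h_m(D)t\in\Lambda$ with $\sup_{x\in U}\norm{s(x)-s'(x)}<\eps$. For $n$ so large that $h_n\equiv 1$ on $\supp(h_m)$, the function $(h_n-1)h_m$ vanishes identically, whence $(h_n(D_x)-1)s'(x)=0$, and then
\[
\norm{(h_n(D_x)-1)s(x)}=\norm{(h_n(D_x)-1)(s(x)-s'(x))}\leq \norm{s(x)-s'(x)}<\eps
\]
on $U$, using $\norm{h_n-1}_\infty\leq 1$. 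Once this sub-lemma is in place, everything else is bookkeeping.
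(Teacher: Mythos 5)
Your argument is correct in substance but follows a genuinely different route from the paper's. You truncate in the spectral variable, separate variables by Stone--Weierstrass over a compact piece of the base, and then pass to the limit, using totality of $\{h_m(D)t\}$ to upgrade the pointwise strong convergence $h_n(D_x)s(x)\to s(x)$ of Theorem \ref{thm:kucerovsky}(3) to locally uniform convergence; that upgrade is the nicest part of your write-up and is exactly right. The paper instead notes that by Corollary \ref{criterion-for-bounded-family} it suffices to check $f(D)(\Delta)\subset\Gamma$, writes $s=(D+i)^{-1}u$ with $u\in\Gamma$, and thereby replaces $f$ by $g(x,t)=f(x,t)\tfrac{1}{t-i}$, which \emph{does} extend continuously to $X\times\overline{\bR}$; one then freezes the $x$-variable locally (for $y$ near $x$ one has $\norm{g_y(D)s(y)-g_x(D)s(y)}\leq\eps\norm{s(y)}$, by compactness of $\overline{\bR}$) and invokes axiom (4) of Definition \ref{defn:cont-field}. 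The paper's version buys two things: no limit over $n$ is needed, and no compactness enters on the base.

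Two caveats on your version. First, the displayed estimate is not quite right: if $\sum_i\phi_ig_i$ approximates $v_n=h_nf$ within $\eps$ on $K\times\supp(h_n)$, then $\sum_i\phi_i(h_ng_i)=h_n\sum_i\phi_ig_i$ approximates $h_nv_n=h_n^2f$ within $\eps$, not $v_n$; on $\supp(h_n)\setminus[-n,n]$ the error against $v_n$ can be of order $\sup_{x\in K}\norm{f_x}$. The repair is immediate --- apply Stone--Weierstrass to $f|_{K\times\supp(h_n)}$ itself, so that $h_n\sum_i\phi_ig_i$ approximates $h_nf=v_n$ within $\eps$ on all of $K\times\bR$ --- but it should be said. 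Second, localizing to compact $K\subset X$ implicitly assumes $X$ is locally compact, a hypothesis the lemma does not carry and which matters for its later uses over paracompact pairs (e.g.\ in Lemma \ref{lem:k-theory-different-flavour}). You can remove it by replacing Stone--Weierstrass with the tube-lemma observation that for fixed $n$, $x_0$ and $\eps$ there is a neighborhood $U$ of $x_0$ with $|v_n(y,t)-v_n(x_0,t)|\leq\eps$ for all $y\in U$, $t\in\bR$ (compactness of $\supp(h_n)$ suffices), and then comparing $v_n(D_y)s(y)$ with the section $y\mapsto (v_n)_{x_0}(D_y)s(y)$ --- i.e.\ the paper's freezing trick applied to your truncations. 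Finally, to invoke Corollary \ref{cor:completion-of-total-subspace} you should pass to the linear span of $\{h_m(D)t\}$; your $n_0$ then depends on the finitely many $m$'s occurring in $s'$, which changes nothing.
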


\begin{proof}
Note that we do not assume that $f$ extends to a continuous function on $X\times \overline{\bR}$. 
It is clear that $\norm{f_x (D_x)}\leq \norm{f_x}$ is locally bounded. It it remains to be proven that $f(D) (\Gamma) \subset \Gamma$. By Corollary \ref{criterion-for-bounded-family}, it is enough to prove that $f(D)(\Delta) \subset \Gamma$. But any $s \in \Delta$ can be written in the form $(D+i)^{-1} u$ for $u \in \Gamma$, by Proposition \ref{spectral-criterion-global}.
This argument proves that it is enough to consider the function $g: X \times \bR \to \bC$, $g(x,t)= f(x,t) \frac{1}{t-i}$. The advantage is that $g$ does extend to a continuous function on $ X \times \overline{\bR}$. 
For each $x \in X$ and each $\eps>0$, there is a neighborhood $U\subset X$ of $x$ such that for all $y \in U$ and $t \in \overline{\bR}$, we have $|g_x(t)-g_y (t)| \leq \eps$. If $s \in \Gamma$, then for all $y \in U$, we have
\[
\norm{g_y (D)s(y)- g_x (D)s(y)}\leq \eps \norm{s(y)}. 
\]
But $y \mapsto g_x(D) s(y)$ is a section of $\Gamma$, and so $y \mapsto g_y(D) s(y)$ is a section as well, by Definition \ref{defn:cont-field}.
\end{proof}

\subsection{Fredholm theory for Dirac families}\label{subsec:fredholmtheory-dirac}

We are now ready to prove the main analytical results of this paper. The following assumptions are in place for the rest of this section.
\begin{enumerate}
\item $\pi: M \to X$ is a submersion with a Riemannian metric on its fibres and $V\to M$ is a smooth bundle of finitely generated projective Hilbert $\gA$-modules. We get a continuous field of Hilbert $\gA$-modules $L^2_{\pi}(M;V)$ over $X$, as in Example \ref{example:basic-field-of-hilbert-modules}.
\item $D$ is a formally self-adjoint differential operator of order $1$ on $V$. It defines an unbounded symmetric operator family with inital domain $\Gamma_{cv} (M;V)$. The closure is an unbounded closed operator family $D: \dom (D) \to L^2_{\pi}(M;V)$.
From now on, we assume that in addition $D$ is \emph{elliptic}, i.e. for each $\xi \in (T_v^* M)_x$, $x \in M$, $\xi \neq 0$, the map $\smb_{D}(\xi) (\xi): V_x \to V_x$ is an isomorphism. 
\item There is a coercive function $h: M \to \bR$ such that the commutator $[D,h]$ is locally bounded in $X$. Then the closure of $D$ is a self-adjoint operator family, by Theorem \ref{chernoff-theorem}, as explaind in (\ref{self-adjointness-dirac-family}).
\end{enumerate}

\begin{defn}\label{defn:fredholm-op-hilbertfield-unbounded}
A self-adjoint unbounded operator family $D: \dom (D)\to H$ is \emph{Fredholm} if the bounded transform $\normalize{D}$ is a (bounded) Fredholm family.
\end{defn}

We want to find conditions which ensure that $D$ is a Fredholm family or invertible. To accomplish this goal, we need a fundamental source of compactness.

\begin{thm}[Generalized Rellich theorem]\label{rellich}
Let $g \in C^{\infty}_{cv} (M)$ be a smooth function. Then the operator family $\dom (D) \to L^2_{\pi} (M;V)$, $u \mapsto gu$, is compact.
\end{thm}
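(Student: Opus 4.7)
The plan is to reduce the statement to compactness of the bounded operator family $T := g \cdot (D+i)^{-1} \in \Lin_{X,\gA}(L^2_\pi(M;V))$. By Proposition \ref{spectral-criterion-global}, $(D+i)^{-1}: L^2_\pi(M;V) \to \dom(D)$ is an invertible bounded operator family, so compactness of multiplication by $g$ as a family $\dom(D) \to L^2_\pi(M;V)$ is equivalent to compactness of $T$. Since compactness of operator families is a local property on $X$, it suffices to verify compactness of $T$ in a neighborhood of any $x_0 \in X$. The vertical compactness of $\supp(g)$ yields a precompact open $U \ni x_0$ and a compact $K \subset M$ with $\supp(g) \cap \pi^{-1}(\overline{U}) \subset K$. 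Covering $K$ by finitely many charts on which both $\pi$ and $V$ trivialize, and using a subordinate partition of unity on $\supp(g)$, I would reduce to the case that $g$ is supported in a chart $W \cong U \times \Omega$, $\Omega \subset \bR^d$ open, $V|_W \cong W \times P$, with $\supp(g) \subset \overline{U} \times K'$ for some compact $K' \subset \Omega$.

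In this local model I would approximate $(D+i)^{-1}$ by smoothing operators. Fix $\psi \in C^\infty_c(\Omega)$ with $\psi = 1$ on $K'$, and let $J_\eps$ be fibrewise convolution on $L^2(\Omega;P)$ with a standard Friedrichs mollifier $j_\eps$ of width $\eps$ (cut off to stay inside $\Omega$). Since $g = g\psi$, consider the operator family $T_\eps := g J_\eps \psi (D+i)^{-1}$. Each $T_\eps$ has a smooth, compactly supported integral kernel $k_\eps(x,\cdot,\cdot) \in C^\infty_c(\Omega \times \Omega; \Lin_\gA(P))$ depending smoothly on $x \in U$. By a Stone--Weierstrass approximation of this kernel by finite separable sums $\sum_j \alpha_j(x,y)\beta_j(x,y')^*$, the family $T_\eps$ is a norm limit, locally uniformly in $x$, of finite sums of rank-one operator families $\theta_{\beta_j,\alpha_j}$, and hence $T_\eps$ is a compact operator family in the sense of Definition \ref{defn:compact-op-hilbertfield}. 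It remains to show that $T_\eps \to T$ in norm, locally uniformly in $x$, as $\eps \to 0$; then compactness of $T$ follows by Remark \ref{remarks-compact-families}(5).

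The main obstacle is this final norm convergence, which reduces to a local elliptic regularity estimate of the shape $\|\phi u\|_{W^1(\Omega;P)} \leq C\bigl(\|u\|_{L^2} + \|D_x u\|_{L^2}\bigr)$ for $\phi \in C^\infty_c(\Omega)$ and $u \in \dom(D_x)$, with constant $C$ uniform on compact subsets of $U$. Combined with the classical fact that $J_\eps - \id$ tends to zero in the operator norm from the compactly supported $W^1$-sections to $L^2$, this yields the desired convergence $T_\eps \to T$. The elliptic estimate for Hilbert-module coefficients can be obtained by adapting the scalar proof of Gårding's inequality, as done in \cite{MF}, to the present family setup; smoothness in $x$ of the coefficients $a_j(x,y), b(x,y)$ of $D$ in the local trivialization makes the resulting constant $C$ locally uniform. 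Alternatively, one may pass through the Hilbert space localizations provided by Theorem \ref{thm:kaadlesch}, reducing the Hilbert module uniformity question to the corresponding statement for classical first-order elliptic operators on the fibres, where everything is standard.
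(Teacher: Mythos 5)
Your argument is correct and follows the same skeleton as the paper's proof: both exploit that compactness of operator families is local in $X$, chop $g$ by a partition of unity into pieces supported in trivializing box charts $\bR^k\times\bR^d$, and use the G\r{a}rding inequality for $\gA$-linear first-order elliptic operators (with coefficients depending continuously on the parameter, so that the constant is locally uniform) to pass from the graph norm of $D$ to the local $W^1$-norm. The difference is the compactness input: the paper simply factors multiplication by $g$ as $\dom(D)\xrightarrow{a}W^1(\bR^d;P)_{\bR^k}\xrightarrow{g}L^2(\bR^d;P)_{\bR^k}$ and quotes the Hilbert-module Rellich lemma of Mishchenko--Fomenko \cite[Lemma 3.3]{MF} together with Remark \ref{remarks-compact-families} for the second arrow, whereas you re-derive that compactness via Friedrichs mollifiers and approximation of the smoothing kernel by finite sums of rank-one families. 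Your route is more self-contained, at the price of having to control the norm convergence $T_\eps\to T$, which is exactly where the elliptic estimate enters; this is a fair trade and both proofs rest on the same two pillars. Two points to tighten. First, $T_\eps=gJ_\eps\psi(D+i)^{-1}$ does not itself have a smooth compactly supported kernel because of the factor $(D+i)^{-1}$; what you mean is that $gJ_\eps\psi$ has such a kernel with values in $\Kom_{\gA}(P)=\Lin_{\gA}(P)$ --- this uses that $P$ is finitely generated projective, so that $\id_P$ is compact; for an infinite-rank fibre the statement is false --- and that $T_\eps$ is then compact as the composite of this compact family with the bounded adjointable family $(D+i)^{-1}$ (adjointable because $(D+i)$ is isometric for the graph inner product by (\ref{estimate-unbounded-op3})). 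Second, treat your ``alternative'' of proving the uniform G\r{a}rding estimate through the localizations of Theorem \ref{thm:kaadlesch} with care: norm inequalities do transfer, but the localized bundles $V^{\pi}$ have infinite rank, so only the estimate, and certainly no compactness, can be imported from the Hilbert-space picture; your primary route of adapting the proof of \cite[Theorem 10.4.4]{HR} is the right one and is what the paper does.
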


\begin{proof}
Compactness of operator families is a local (in $X$) property, so that we can assume that $g$ has compact support (and not merely vertically compact support). By the inverse function theorem, $M$ can be covered by ``box neighborhoods'' $\bR^k \times \bR^d$ (the map $\pi$ is projection to $\bR^k$ in these coordinates). Moreover, we shall assume that each box neighborhood is relatively compact in $M$. Over such box neighborhoods, the bundle $V$ is isomorphic to the trivial bundle with fibre $P$, and the volume measure of the fibrewise Riemannian metric is of the form $b(x,y) dy$, for a smooth function $b$ which is uniformly bounded and uniformly bounded away from $0$.
Then we can write $g$ as a finite sum $\sum_i g_i$, and each $g_i$ has support in the unit disc of a box neighborhood.
Furthermore, it is enough to consider functions of the form $g(x,y)=f(x) h(y)$, with $g,h$ both of compact support. 

The first key ingredient is a generalization of the classical Rellich theorem due to Mishchenko and Fomenko \cite[Lemma 3.3]{MF}. Let $P$ be a finitely generated projective $\gA$-module with an inner product. On the space $C_c^{\infty}(\bR^d; P)$, consider the $\gA$-valued inner product 
\[
\scpr{u,v} := \int_{\bR^d} \scpr{u(y), v(y) }+ \sum_{j=1}^{d}\scpr{\partial_j u(y),\partial_j  v(y) } dy \in \gA.
\]
The completion of $C_c^{\infty} (\bR^d,P)$ is by definition the Sobolev space $W^1 (\bR^d, P)$. If $h \in C_c^{\infty} (\bR^d)$, then the operator $W^1 (\bR^d; P) \to L^2 (\bR^d; P)$, $u \mapsto  hu$ is a compact adjointable operator of Hilbert $\gA$-modules, by \cite[Lemma 3.3]{MF}. 
Together with Remark \ref{remarks-compact-families} (6), this implies that the map $ W^1 (\bR^d; P)_{\bR^k} \to L^2 (\bR^d; P)_{\bR^k}$ of trivial fields defined by $(x,u) \mapsto (x, f(x) hu)$ is a compact operator family. 

The second ingredient is the G\r{a}rding inequality, which says that for each elliptic $\gA$-linear operator of order $1$ on $\bR^d$ and each compact $K \subset \bR^d$, there are constants $c,C >0$ such that $c \norm{u}_{W^1}^{2} \leq \norm{u}^2_{L^2} + \norm{Du}^2_{L^2} \leq C \norm{u}^2_{W^1}$ holds for all $u$ with support in $K$. The proof of the classical G\r{a}rding inequality given in \cite[Theorem 10.4.4]{HR} carries over to the situation of $\gA$-linear operators, without change.

Hence the graph norm induced by $D$, restricted to $D^k \times D^d$, and the $W^1$-norm are equivalent. If we pick an auxiliary smooth function $a: \bR^k \times \bR^d \to [0,1]$ with compact support and $ag=g$, then we can factor the map $g:\dom (D) \to L^2_{\pi} (M;V)$ as 
\[
\dom (D) \stackrel{a}{\to}  W^1 (\bR^d, P)_{\bR^k} \stackrel{g}{\to} L^2 (\bR^d, P)_{\bR^k},
\]
i.e. as the composition of a bounded with a compact operator family. This completes the proof.
\end{proof}

\begin{proposition}\label{compactness1}
Let $f \in C_0 (\bR)$ and $g \in C^{\infty}_{cv} (M)$. Then the operator families $g f(D)$ and $f(D) g$ are compact. 
\end{proposition}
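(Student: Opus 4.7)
My plan is to introduce the set
\[
\cI := \{ f \in \gC_0(\bR) \mid g f(D) \in \Kom_{X,\gA}(L^2_\pi(M;V)) \text{ for every } g \in C^\infty_{cv}(M)\}
\]
and prove that $\cI$ is a closed ideal in $\gC_0(\bR)$ containing a nowhere-vanishing function. By the standard correspondence between closed ideals in $\gC_0(\bR)$ and open subsets of $\bR$, this forces $\cI = \gC_0(\bR)$. The companion statement about $f(D) g$ then follows by taking adjoints, using that $\Kom_{X,\gA}$ is a $*$-ideal (Remark \ref{remarks-compact-families}(4)).

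For the generator, take $f_0(t) := \frac{1}{t+i}$. Since $tf_0 \in \gC(\overline{\bR})$, Theorem \ref{thm:kucerovsky}(4) gives $f_0(D)(L^2) \subset \dom(D)$, and both $f_0(D)$ and $Df_0(D) = (tf_0)(D)$ are bounded operator families by Theorem \ref{thm:global-functional-calculus}. Hence $f_0(D)$ is bounded as an operator family from $L^2_\pi(M;V)$ to the graph field $\dom(D)$. Composing with the compact operator family $g : \dom(D) \to L^2_\pi(M;V)$ supplied by Theorem \ref{rellich} shows that $g f_0(D)$ is compact, so $f_0 \in \cI$.

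For the ideal property, if $f \in \cI$ and $h \in \gC_0(\bR)$, then $(hf)(D) = f(D)h(D)$ by functional calculus, so $g(hf)(D) = (gf(D)) \cdot h(D)$ is the composition of a compact operator family with the bounded operator family $h(D)$, hence compact. For closedness in the sup norm, the key observation is that for any compact neighborhood $U \subset X$, the set $\pi^{-1}(U) \cap \supp(g)$ is compact because $\pi|_{\supp(g)}$ is proper; thus $C_U := \sup_{x \in U} \norm{g_x}_\infty$ is finite. If $f_n \to f$ uniformly, then
\[
\sup_{x \in U} \norm{g_x(f_n - f)(D_x)} \leq C_U \norm{f_n - f}_\infty \to 0,
\]
so $g f_n(D) \to g f(D)$ in the sense of Lemma \ref{convergence-of-operators}, and compactness is preserved by Remark \ref{remarks-compact-families}(5). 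Thus $\cI$ is closed. Since $f_0$ is nowhere zero, the only closed ideal of $\gC_0(\bR)$ containing it is $\gC_0(\bR)$ itself.

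Finally, for $f(D) g$: choose $g \in C^\infty_{cv}(M)$ and $f \in \gC_0(\bR)$, and observe $(\bar g \bar f(D))^* = f(D) g$ on the level of fibres, because $(\bar f)(D)^* = f(D)$ and multiplication by $\bar g$ has adjoint multiplication by $g$. Applying the first half of the proof to $\bar f$ and $\bar g \in C^\infty_{cv}(M)$ shows $\bar g \bar f(D)$ is compact, and the adjoint of a compact operator family is compact. The step I expect to require the most care is verifying that $g f(D)$ is a \emph{bounded operator family} (not just fibrewise bounded) for general $f \in \gC_0(\bR)$ — this is exactly where the local boundedness of $\norm{g_x}_\infty$, arising from fibrewise compactness of $\supp(g)$, plays a role, both in the closedness argument and in interpreting the statement to begin with.
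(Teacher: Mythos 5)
Your proof is correct and follows essentially the same route as the paper: a closed-ideal argument in the functional calculus reduces everything to the resolvent, whose compactness after multiplication by $g$ comes from factoring through the graph field and invoking the generalized Rellich theorem \ref{rellich}, with the $f(D)g$ case handled by adjoints. The only (cosmetic) difference is that the paper works with the ideal generated by $\frac{1}{t^2+1}$ inside $\gC(\overline{\bR})$ and factors $g(D^2+1)^{-1}=(g(D+i)^{-1})(D-i)^{-1}$, whereas you take the nowhere-vanishing generator $\frac{1}{t+i}$ of $\gC_0(\bR)$ directly; both are fine.
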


\begin{proof}
By taking adjoints, it is enough to consider the operator $g f(D)$. Let $\gJ \subset \gC(\overline{\bR})$ be the set of functions $f$ such that $g f(D)$ is compact. By the functional calculus, $\gJ$ is a closed ideal in $\gC(\overline{\bR})$. We have to prove that $\gJ=\gC_0 (\bR)$, and since $f(t)=\frac{1}{t^2+1}$ generates the ideal $\gC_0 (\bR)$ in $\gC(\overline{\bR})$, it is enough to consider the case $f(x)= \frac{1}{x^2 +1}$. Write
\[
g (D^2 +1)^{-1} = (g (D+i)^{-1}) (D-i)^{-1}.
\]
The operator family $(D-i)^{-1}$ is bounded, and we can write $g (D+i)^{-1}$ as a composition 
\[
L^2_X(M;V) \stackrel{(D+i)^{-1}}{\to} \dom (D) \stackrel{g}{\to} L^2_X(M;V)
\]
of a bounded operator family with a compact one, by Theorem \ref{rellich}. 
\end{proof}

\begin{remark}\label{rem:to-compactnesstheorems}
Theorem \ref{rellich} and Proposition \ref{compactness1} remain valid if $g$ is a smooth, compactly supported bundle endomorphism of $V$ instead of a function, by the same proof. 
\end{remark}

\begin{proposition}\label{compactness2}
Let $f \in \gC(\overline{\bR})$ and $g \in C^{\infty}_{cv} (M)$. Then $[f(D),g]$ is compact.
\end{proposition}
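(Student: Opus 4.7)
The plan is to apply a Stone--Weierstrass argument to the subset
\[
\mathcal{I} := \{ f \in \gC(\overline{\bR}) : [f(D), g] \in \Kom_{X,\gA}(L^2_\pi(M;V))\}.
\]
First I will check that $\mathcal{I}$ is a closed $*$-subalgebra. Linearity is immediate; the Leibniz identity $[(f_1f_2)(D), g] = f_1(D)[f_2(D),g] + [f_1(D),g]f_2(D)$ together with the ideal property of $\Kom_{X,\gA}$ gives closure under products; the norm estimate $\|f_n(D) - f(D)\| \leq \|f_n - f\|_{C^0}$ from Theorem \ref{thm:kucerovsky}(2) combined with Remark \ref{remarks-compact-families}(5) gives closedness under uniform limits; and $*$-closedness is routine. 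Then $1\in\mathcal{I}$ trivially and $\gC_0(\bR)\subset\mathcal{I}$ by Proposition \ref{compactness1} (since both $f(D)g$ and $gf(D)$ are compact). Because $\gC_0(\bR)+\bC\cdot 1$ is exactly the subalgebra of functions with equal limits at $\pm\infty$, it will suffice to put the bounded transform $b(t) := t/\sqrt{1+t^2}$ into $\mathcal{I}$; then $b(+\infty)\neq b(-\infty)$ together with $\gC_0(\bR)$ separates all points of $\overline{\bR}$ and Stone--Weierstrass forces $\mathcal{I}=\gC(\overline{\bR})$.

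The serious step is the compactness of $[b(D),g]$. Setting $R_s := (1+D^2+s^2)^{-1}$ and starting from the absolutely norm-convergent representation $(1+D^2)^{-1/2} = \tfrac{2}{\pi}\int_0^\infty R_s\,ds$ of Proposition \ref{prop:positive-operator-of-order-two}, I will work with the truncations $\phi_T(D) := \int_0^T DR_s\,ds$. Using $[DR_s, g] = R_s[D,g] + [R_s,g]D$, substituting $[R_s, g] = -R_s(D[D,g]+[D,g]D)R_s$, and applying the resolvent identity $D^2R_s = 1-(1+s^2)R_s$ to absorb the unbounded $D$ on the right of $[R_s,g]D$, the expression collapses to
\[
[DR_s, g] \;=\; (1+s^2)\, R_s\, [D,g]\, R_s \;-\; (R_s D)\, [D,g]\, (R_s D).
\]
Both $R_s$ and $R_s D$ are functional-calculus operators in $\gC_0(\bR)$ (the functions $(1+t^2+s^2)^{-1}$ and $t(1+t^2+s^2)^{-1}$), while $[D,g]$ is a smooth bundle endomorphism with vertically compact support, so each summand is a compact operator family by Proposition \ref{compactness1} together with Remark \ref{rem:to-compactnesstheorems}. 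The spectral bounds $\|R_s\|\leq (1+s^2)^{-1}$ and $\|R_s D\|\leq (2\sqrt{1+s^2})^{-1}$ give $\|[DR_s,g]\|_x \leq C(x)/(1+s^2)$ with $C(x):=\|[D,g]\|_x$ locally bounded in $x$, so $\int_0^\infty [DR_s,g]\,ds$ converges absolutely in operator family norm, locally uniformly on $X$, and defines a compact operator family by Remark \ref{remarks-compact-families}(5).

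To finish, I will identify this integral with $\tfrac{\pi}{2}[b(D),g]$: the truncated spectral functions $\phi_T(\lambda) = \lambda(1+\lambda^2)^{-1/2}\arctan(T/\sqrt{1+\lambda^2})$ converge pointwise to $\tfrac{\pi}{2}b(\lambda)$ with uniform bound $\pi/2$, so $\phi_T(D)u\to\tfrac{\pi}{2}b(D)u$ strongly by Theorem \ref{thm:kucerovsky}(3), hence $[\phi_T(D),g]u = \int_0^T[DR_s,g]u\,ds \to \tfrac{\pi}{2}[b(D),g]u$ while the same partial integrals converge in operator norm to $\int_0^\infty [DR_s,g]\,ds$. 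Therefore $[b(D),g]$ equals a norm-convergent integral of compact operator families and is itself compact, giving $b\in\mathcal{I}$ and completing the proof. The principal obstacle is controlling the unbounded factor $D$ appearing in $[R_s,g]D$; the resolvent identity $D^2R_s = 1-(1+s^2)R_s$ is the one algebraic step that converts it into a sum of bounded quantities whose norms decay like $(1+s^2)^{-1}$, making the whole commutator behave like an operator of ``negative order''.
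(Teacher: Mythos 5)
Your argument is correct and follows essentially the same route as the paper: reduce via the codimension-two ideal $\gC_0(\bR)\subset\gC(\overline{\bR})$ to the bounded transform, then express $[\normalize{D},g]$ through the integral representation of $(1+D^2+s^2)^{-1}$ from Proposition \ref{prop:positive-operator-of-order-two}, use the resolvent commutator identity together with $D^2R_s=1-(1+s^2)R_s$ to get integrands that are compact (by Proposition \ref{compactness1} and Remark \ref{rem:to-compactnesstheorems}) with norms decaying like $(1+s^2)^{-1}$, and conclude by closedness of compact families under locally uniform limits. The only differences are organizational — you integrate $DR_s$ directly and identify the limit via truncations $\phi_T$ and Theorem \ref{thm:kucerovsky}(3), whereas the paper first splits $[g,PD]=[g,P]D+P[g,D]$ — and these do not change the substance of the proof.
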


\begin{proof}
We fix $g$ and let $\gI \subset  \gC(\overline{\bR})$ be the set of all $f$ such that $[f(D),g]$ is compact. This is a closed subspace, and since $[f_0(D)f_1(D),g] = f_0(D) [f_1(D),g] + [f_0 (D),g] f_1 (D)$, it is also a subalgebra. By Proposition \ref{compactness1}, $\gC_0 (\bR) \subset \gI$. Since $\gC_0(\bR) \subset \gC(\overline{\bR})$ is a codimension $2$ ideal, and since $1 \in \gI$, it only remains to prove that $\normalize{t} \in \gI$. The argument for that is very similar to \cite[Proposition 17.11.3]{Bla}.

Let $Z(t):= (D^2 + t^2+1)^{-1}$ and $P= \sqrt{Z(0)}$. Note that $\normalize{D}= DP=PD$. For $u \in \Gamma_{cv} (M;V)$, compute
\begin{equation}\label{compactnessproofeq1}
[g,PD] u =  [g,P] Du + P [g,D]u.
\end{equation}
We claim that $[g,P]D$ is compact. If that is proven, \eqref{compactnessproofeq1} implies the equation $[g,PD] = [g,P]D + P[g,D]$ of bounded operator families, because $P [g,D]$ is bounded, and because $\Gamma_{cv} (M;V)$ is a total subset in $L^2_\pi (M;V)$. Furthermore, as $P[g,D]$ is compact by Proposition \ref{compactness1} and Remark \ref{rem:to-compactnesstheorems}, it also follows that $[g,PD]$ is compact.

For the proof that $[g,P]D$ is compact, we first consider the case $X=*$. Proposition \ref{prop:positive-operator-of-order-two} gives the absolutely convergent integral representation
\[
P= \frac{2}{\pi} \int_0^{\infty} Z(t) dt. 
\]
Hence for $u \in \dom (D)$, we have
\[
 [g, P] Du=\frac{2}{\pi} \int_0^{\infty} [g,Z(t)] Du dt,
\]
and a formal computation with operators yields
\begin{equation}\label{compactnessproofeq1}
[g,Z(t)]D= Z(t) [D^2,g] Z(t) D= Z(t) [D,g] DZ(t) D+Z(t)D [D,g] Z(t) D.
\end{equation}
To justify (\ref{compactnessproofeq1}), one restricts to the respective domains, using that $Z(t)$ maps $E$ to $\dom (D^2)$, by Proposition \ref{prop:square-of-selfadjoint}.
There are estimates
\[
\norm{Z(t)} \leq \frac{1}{1+t^2}; \,\norm{Z(t)D}= \norm{DZ(t)}\leq \frac{1}{2} \frac{1}{(1+t^2)^{1/2}}; \; \norm{D^2 Z(t)} \leq 1; 
\]
the second one follows from $\sup_x \frac{x}{x^2 +1+t^2} = \frac{1}{2} \frac{1}{(1+t^2)^{1/2}}$. Thus (note that $Z(t)$ and $D$ commute)
\[
\norm{[a,Z(t)]D}\leq  \norm{Z(t)} \norm{[D,a]}+\norm{Z(t)D}^2 \norm{[D,a]} \leq \frac{2}{t^2+1} \norm{[D,a]}.
\]
Therefore we obtain the absolutely convergent integral of \emph{operators}
\begin{equation}\label{compactnessproofeq2}
[a, P] D=\frac{2}{\pi} \int_0^{\infty} [a,Z(t)] D dt = \frac{2}{\pi} \int_0^{\infty} (Z(t) [D,a] D^2 Z(t) +Z(t)D [D,a] Z(t) D) dt.
\end{equation}
To show that $[a,P]D$ is compact, it remains to prove that the integrand is compact. But $Z(t) [D,a]$ is compact by Proposition \ref{compactness1} and $ D^2 Z(t)$ is bounded, so the first summand is compact. Similarly, the second summand is compact. This finishes the proof of the proposition if $X=*$. 

The changes for the parametrized case are minimal. First, compactness is a local property. By the arguments just given, the integral (\ref{compactnessproofeq2}) converges locally uniformly (in $X$).
Again by Proposition \ref{compactness1}, $\sqrt{Z(t)}[D,a]$ is globally compact (and not merely pointwise). Since the set of compact operator families is closed by Remark \ref{remarks-compact-families} (4), the arguments given in the case $X=*$ also prove that $[a,P]D$ is a compact family.
\end{proof}

Typically, the operator $\normalize{D}$ is \emph{not} Fredholm, unless further conditions on $D$ are imposed. The following two theorems give sufficient conditions. In the unparametrized case, these are well-known. The first one proves Fredholmness under a coercivity condition.

Let $h: M \to \bR$ be a function. We say that $D^2 \geq h$ if for each section $s\in \Gamma_{cv} (M; V)$ and each $x \in X$, the inequality
\[
\scpr{D^2s,s}|_x \geq \scpr{hs,s}|_x
\]
holds in the $\cstar$-algebra $\gA$. 

\begin{thm}\label{fredholmness-coercivity}
Assume that there is a coercive function $h: M \to \bR$ such that $D^2 \geq h$. Then $D$ has compact resolvent, i.e. $(D^2 +1)^{-1}$ is compact. The operator family $F:= \normalize{D}$ is Fredholm and $F^2 -1$ is compact.
\end{thm}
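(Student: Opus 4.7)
The plan is to first show that $(1+D^2)^{-1}$ is a compact operator family; the remaining assertions then follow almost for free from functional calculus. Indeed, $F^2 - 1 = D^2(1+D^2)^{-1} - 1 = -(1+D^2)^{-1}$, so once this is known to be compact, the identity $F \cdot F - 1 = F^2 - 1$ exhibits $F$ itself as a parametrix for $F$, making $F$ Fredholm in the sense of Definition \ref{defn:fredholm-op-hilbertfield-unbounded}.

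For the compactness of $(1+D^2)^{-1}$, the core idea is the factorisation
\[
(1+D^2)^{-1} \;=\; \bigl[(1+D^2)^{-1/2}\, (1+h)^{-1/2}\bigr] \cdot \bigl[(1+h)^{1/2}\, (1+D^2)^{-1/2}\bigr],
\]
which I would interpret on the dense subspace $(1+D^2)^{1/2}(\Gamma_{cv}(M;V))$, where the algebraic cancellation between $(1+h)^{-1/2}$ and the unbounded multiplier $(1+h)^{1/2}$ is meaningful. To show that the right factor $T := (1+h)^{1/2}(1+D^2)^{-1/2}$ extends fibrewise to a contraction, I would use the hypothesis: $D_x^2 \geq h$ pointwise in $\gA$ means $\scpr{(1+D_x^2)s,s}_x \geq \scpr{(1+h)s,s}_x$ for all $s \in \Gamma_c(M_x;V_x)$. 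Using that $a \leq b$ implies $\norm{a} \leq \norm{b}$ for positive elements of a $\cstar$-algebra, and rewriting both inner products via the $\cstar$-functional calculus, one gets
\[
\norm{(1+h)^{1/2} s}_{L^2}^2 \;\leq\; \norm{(1+D_x^2)^{1/2} s}_{L^2}^2.
\]
Substituting $s = (1+D_x^2)^{-1/2} v$ for $v$ in the dense image of $(1+D_x^2)^{1/2}$ on $\Gamma_c(M_x;V_x)$ (which is a core for $D_x$) gives $\norm{T_x v} \leq \norm{v}$. To promote this pointwise contraction to a bounded operator family over $X$, I would invoke Corollary \ref{criterion-for-bounded-family} with the total subspace $\{(1+D^2)^{-1/2} s : s \in \Gamma_{cv}\} \subset \Gamma$, on which $T$ acts simply as multiplication by the smooth function $(1+h)^{1/2}$ and hence preserves $\Gamma$.

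For the left factor $S := (1+D^2)^{-1/2} \cdot (1+h)^{-1/2}$, note first that multiplication by $(1+h)^{-1/2}$ is a bounded operator family because $h$ is bounded below. Since $h$ is coercive (fibrewise proper and bounded below), one can choose smooth vertically compactly supported cutoffs $\phi_n$ with $\phi_n \equiv 1$ on $\{h \leq n\}$; then $m_n := (1+h)^{-1/2}\phi_n$ is smooth with vertically compact support and $\norm{(1+h)^{-1/2} - m_n}_\infty \leq (1+n)^{-1/2}$. Proposition \ref{compactness1} applied with $f(t) = (1+t^2)^{-1/2} \in \gC_0(\bR)$ shows that each $(1+D^2)^{-1/2} m_n$ is a compact operator family, so by Remark \ref{remarks-compact-families} (5) the (globally!) uniform limit $S$ is also compact. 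The composition $S \cdot T = (1+D^2)^{-1}$ — which we check on the dense subspace $(1+D^2)^{1/2}(\Gamma_{cv})$ where everything telescopes algebraically — is then compact as compact times bounded, concluding the proof. The main technical obstacle I anticipate is not conceptual but bookkeeping: carefully justifying that $T$ is a genuine operator family in the sense of Definition \ref{defn-homomorphism}, despite the fact that the multiplier $(1+h)^{1/2}$ is unbounded and so $T$ cannot be defined factorwise.
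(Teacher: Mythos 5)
Your overall strategy is different from the paper's: the paper fixes cutoffs $g_n \in C^\infty_{cv}(M)$ with $h \geq 2n$ on $\supp(g_m-g_n)$, proves the localized estimate $\norm{f(D^2+1)^{-1}} \leq \tfrac{\sqrt{2}}{C}\sqrt{\delta^2+\eta^2}$ for $f$ supported where $h\geq C$ via the Cauchy--Schwarz trick $\norm{Dfu}\,\norm{fu}\geq C\norm{fu}^2$, and exhibits $(D^2+1)^{-1}$ as the norm limit of the compact families $g_n(D^2+1)^{-1}$. Your factorization through the contraction $T=(1+h)^{1/2}(1+D^2)^{-1/2}$ is a legitimate alternative in the single-operator setting, and your treatment of $S$ (norm limit of $(1+D^2)^{-1/2}m_n$, each compact by Proposition \ref{compactness1}) is fine. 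Two smaller points first: since $h$ is only bounded below, $1+h$ need not be positive, so you must work with $C+h$ for $C$ large and pass back via $\frac{1}{1+t^2}=\frac{C+t^2}{1+t^2}\cdot\frac{1}{C+t^2}$; and the paper never develops $(1+D^2)^{1/2}$ as an unbounded operator on a Hilbert module (domain, core, surjectivity onto $E$), so the density of $(1+D_x^2)^{1/2}(\Gamma_c)$ needs to be supplied.

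The genuine gap is the claim that $T$ is a bounded operator \emph{family}. Your justification --- that on the total subspace $(1+D^2)^{-1/2}\Gamma_{cv}$ the operator $T$ ``acts as multiplication by $(1+h)^{1/2}$ and hence preserves $\Gamma$'' --- does not work: on that subspace $T$ sends $(1+D^2)^{-1/2}s$ to $(1+h)^{1/2}(1+D^2)^{-1}s$, and $(1+D^2)^{-1}s$ is \emph{not} vertically compactly supported, so you are multiplying a continuous section by an unbounded function; there is no a priori reason the result is a continuous section, and proving it essentially forces you back to a localized estimate of the paper's type (note that $(1+h)(1+D^2)^{-1}$ need not even be bounded, since $A\geq B\geq 0$ does not give $B^2\leq A^2$). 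This matters because without $T$ being a family, the fibrewise identity $S_xT_x=(1+D_x^2)^{-1}$ does not make $(1+D^2)^{-1}$ a compact \emph{family}. The cleanest repair stays entirely inside your framework but avoids $T$ as an operator: observe that $S_nT=(1+D^2)^{-1/2}\phi_n(1+D^2)^{-1/2}$ after the multipliers cancel, which is manifestly a compact family, and that the remainder satisfies
\[
0 \;\leq\; (1+D_x^2)^{-1/2}(1-\phi_n)(1+D_x^2)^{-1/2} \;\leq\; \tfrac{1}{1+n}\,(1+D_x^2)^{-1/2}(1+h)(1+D_x^2)^{-1/2}\;\leq\;\tfrac{1}{1+n},
\]
where the last inequality is exactly your fibrewise contraction property, checked on the dense subspace $(1+D_x^2)^{1/2}(\Gamma_c)$. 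Then $(1+D^2)^{-1}$ is a uniform limit of compact families and Remark \ref{remarks-compact-families}~(5) finishes the argument. Alternatively, work with the adjoint $T^*=\overline{(1+D^2)^{-1/2}(1+h)^{1/2}}$, which \emph{does} visibly preserve $\Gamma_{cv}$ and hence is a family by Corollary \ref{criterion-for-bounded-family}.
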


\begin{proof}
If $(D^2 +1)^{-1}$ is compact, then so is
\[
F^2-1 =- (D^2+1)^{-1}
\]
and hence $F$ is Fredholm, because then $F$ is a parametrix to $F$. So only the first claim of the theorem needs a proof. We let $H = L^2_X (M;V)$, with $\Gamma$ its space of continuous sections. The domain field of the closure of $D$ is denoted $(W, \Delta)$. 
Let $f \in C^{\infty}_{cv} (M)$ be a fibrewise compactly supported function such that $h|_{\supp(f)} \geq C> 0$ and $u \in \Gamma_{cv}(M;V)$. Then
\[
 \scpr{fDf u,u} = \scpr{Dfu,fu} \geq \scpr{hfu,fu}.
\]
Together with the inequality $\scpr{h fu,fu} \geq \inf_{x \in \supp(f)} h(x) \scpr{fu,fu}$ and Cauchy-Schwarz, this shows
\[
 \norm{Dfu} \norm{fu} \geq  \inf_{x \in \supp(f)} h(x) \norm{fu}^2 \geq C \norm{fu}^2.
\]
By continuity, this inequality extends to all $u \in \Delta$. Let $\norm{f} \leq \eta$, $\norm{[D, f]} \leq \delta$. Then 
\[
\norm{fu} \leq \frac{1}{C} \norm{D fu} \leq \frac{1}{C} (\norm{[D,f] u} + \norm{fDu}) \leq \frac{1}{C} (\delta \norm{ u} + \eta \norm{Du}) \leq
\]
\[
\leq \frac{1}{C} \sqrt{\delta^2+\eta^2} (\norm{u}^2+ \norm{Du}^2)^{1/2} \stackrel{\ref{estimate-unbounded-op2}}{\leq} \frac{\sqrt{2}}{C} \sqrt{\delta^2+\eta^2} \norm{u}_D =  \frac{\sqrt{2}}{C} \sqrt{\delta^2+\eta^2} \norm{(D+i)u}.
\]
An arbitrary $v\in \Gamma$ can be written as $v= (D+i)u$, $u \in \Delta$. Thus the above estimate yields
\[
\norm{f \frac{1}{D+i} v} \leq \frac{\sqrt{2}}{C} \sqrt{\delta^2+\eta^2} \norm{v}
\]
and
\[
\norm{f \frac{1}{D^2+1}} \leq \norm{f \frac{1}{D+i}} \norm{\frac{1}{D-i}} \leq \frac{\sqrt{2}}{C} \sqrt{\delta^2+\eta^2}. 
\]
Now we choose functions $g_n \in C_{cv}^{\infty} (M)$, $n \in \bN$, such that $0\leq g_n \uparrow 1$, such that $[D,g_n] \leq 1$ and such that on the support of $g_m-g_n$ (for $m \geq n$), we have $h \geq 2n$. 

We have that $g_n \frac{1}{D^2+1} u \to \frac{1}{D^2+1}u $, for each $u$, and we wish to argue that $g_n \frac{1}{D^2+1}$ converges to $\frac{1}{D^2+1}$ in norm. This then proves that $\frac{1}{D^2+1}$ is the norm limit of the compact (by Proposition \ref{compactness1}) operator families $g_n \frac{1}{D^2+1}$ and hence itself compact.

To prove the norm convergence, it is enough to prove that $g_n \frac{1}{D^2+1}$ is a Cauchy sequence. But for $m \geq n$, we have
\[
\norm{(g_m-g_n) \frac{1}{1+D^2} } \leq \frac{\sqrt{2}}{2n}\sqrt{2}=\frac{1}{n} 
\]
by the above estimates, which shows that $g_n \frac{1}{D^2+1}$ is a Cauchy sequence.
\end{proof}

We next want to relax the condition that $D^2$ is bounded below by coercive function, and prove Fredholmness under the weaker condition that $D^2$ is uniformly bounded below by a positive constant, outside a fibrewise compact set. 
\begin{thm}\label{fredholmness:invertibility-at-infty}
Assume that there exists a subset $K \subset M$ such that $\pi: K \to M$ is proper and $c>0$ such that $D^2\geq c^2$ outside $K$. Then $\normalize{D}$ is Fredholm.
\end{thm}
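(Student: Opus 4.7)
The plan is to verify the essential spectral gap criterion of Lemma~\ref{lem:essential-spectral-gap}: I will exhibit a compact operator family $C$ and a positive $\eps^2$ such that $F^2 + C \geq \eps^2$, where $F := \normalize{D}$.

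The first and main step upgrades the pointwise hypothesis to a \emph{global} form estimate $D^2 + B \geq c'^2$ on all of $M$, where $B \geq 0$ is a bounded multiplication operator by a vertically compactly supported bundle endomorphism. Using that $\pi|_K \colon K \to X$ is proper, I pick $\chi \in C^{\infty}_{cv}(M)$ with $0 \leq \chi \leq 1$ and $\chi \equiv 1$ on an open neighborhood of $K$. For any $u \in \Gamma_{cv}(M;V)$, the section $v := (1-\chi) u$ has support disjoint from $K$, so the hypothesis gives $\scpr{Dv, Dv} \geq c^2 \scpr{v, v}$ in $\gA$. Expanding $Dv = (1-\chi) Du - [D,\chi] u$ and applying the $\gA$-valued Young inequality $\scpr{a+b, a+b} \leq (1+\eta) \scpr{a,a} + (1+\eta^{-1}) \scpr{b,b}$, combined with the monotonicity bound $\scpr{(1-\chi)^2 Du, Du} \leq \scpr{D^2 u, u}$, yields after rearrangement the desired estimate with $c' := c/\sqrt{1+\eta} < c$ and $B$ built from the non-negative contributions $c^2(2\chi - \chi^2)$ and $(1+\eta^{-1})[D,\chi]^*[D,\chi]$.

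Given the form estimate, $D^2 + 1 + B$ is a self-adjoint regular bounded-below perturbation of $D^2 + 1$, so it is invertible with $\|(D^2+1+B)^{-1}\| \leq (1+c'^2)^{-1}$. The resolvent identity reads
\[
(1+D^2)^{-1} - (D^2+1+B)^{-1} = (1+D^2)^{-1} \, B \, (D^2+1+B)^{-1}.
\]
Since $(1+D^2)^{-1} = g(D)$ with $g(t) = (1+t^2)^{-1} \in \gC_0(\bR)$, Proposition~\ref{compactness1} together with Remark~\ref{rem:to-compactnesstheorems} gives that $g(D) B$ is a compact operator family; composing with the bounded family $(D^2+1+B)^{-1}$ preserves compactness. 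Hence, using $F^2 = 1 - (1+D^2)^{-1}$, there is a compact operator family $C$ with
\[
F^2 + C = 1 - (D^2+1+B)^{-1} \geq 1 - (1+c'^2)^{-1} = \tfrac{c'^2}{1+c'^2} =: \eps^2,
\]
and (the proof of) Lemma~\ref{lem:essential-spectral-gap} concludes that $F$ is Fredholm.

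The hard part will be Step~1: writing out and controlling the $\gA$-valued inner products in the expansion of $Dv$, and verifying the Young inequality and the monotonicity $\scpr{(1-\chi)^2 Du, Du} \leq \scpr{D^2 u, u}$ in the $\cstar$-algebraic sense (positivity of the difference as an element of $\gA$). Everything that follows is an essentially formal application of the functional calculus of Theorem~\ref{thm:global-functional-calculus} to the self-adjoint regular family $D^2 + B$ to make sense of $(D^2+1+B)^{-1}$, combined with the compactness result in Proposition~\ref{compactness1}.
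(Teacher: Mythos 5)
Your argument is correct, but it takes a genuinely different route from the paper. The paper doubles the operator to $E=\twomatrix{}{D}{D}{}$ and perturbs it by the odd zeroth-order term $g=\twomatrix{}{-if}{if}{}$ with $f\in C^\infty_{cv}(M)$ chosen so that $E_f^2=E^2+f^2\pm i[D,f]\geq c^2/2$; then $\normalize{E_f}$ is invertible by Lemma \ref{invertible-everywhere}, the difference $\normalize{E}^2-\normalize{E_f}^2$ is shown to be compact by the same resolvent identity you use (with Proposition \ref{compactness1} supplying compactness of $q\frac{1}{E^2+1}$), and $\normalize{E_f}^{-2}$ furnishes a parametrix. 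You instead stay with $D$ itself, prove the global form estimate $D^2+B\geq c'^2$ by an explicit IMS-type localization with the cutoff $\chi$, and then verify the criterion ``$F^2+\text{compact}\geq\eps^2$'' from the proof of Lemma \ref{lem:essential-spectral-gap}. What your approach buys is that it makes explicit the localization step which the paper leaves implicit: the existence of an $f$ with $D^2+f^2\geq c^2$ \emph{globally} (and $\norm{[D,f]}\leq c^2/2$) is exactly the kind of commutator estimate you carry out in Step 1, so your write-up actually fills in a point the paper glosses over. What the paper's doubling trick buys is that $E_f$ is again a formally self-adjoint \emph{first-order} operator family, so the already-developed machinery (Theorem \ref{chernoff-theorem}, Theorem \ref{thm:global-functional-calculus}) applies to it verbatim.

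One point you should address explicitly: you need $(D^2+1+B)^{-1}$ not just as a fibrewise bounded operator (which follows from Lemma \ref{lem:dsq+asq-selfad} and Proposition \ref{prop:positive-operator-of-order-two} applied at each $x$), but as a bounded adjointable \emph{operator family}, i.e.\ it must carry continuous sections to continuous sections, since you compose it with the compact family $g(D)B$ and invoke the ideal property of $\Kom_{X,\gA}$. The paper's family functional calculus is only set up for first-order $D$. A quick fix within the paper's toolbox is the factorization
\[
(1+D^2+B)^{-1}=(1+D^2)^{-1/2}\bigl(1+(1+D^2)^{-1/2}B(1+D^2)^{-1/2}\bigr)^{-1}(1+D^2)^{-1/2},
\]
where $(1+D^2)^{-1/2}$ is a bounded operator family by Theorem \ref{thm:global-functional-calculus}, the middle factor is the inverse of $1+(\text{positive bounded adjointable family})$ and hence a bounded operator family by Lemma \ref{isomorphism-criterion}. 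With that supplied, and with the routine verifications you defer to Step 1 (the $\gA$-valued Young inequality and the monotonicity $\scpr{(1-\chi)^2Du,Du}\leq\scpr{Du,Du}=\scpr{D^2u,u}$ on the initial domain, both of which do hold by the standard positivity arguments in a Hilbert module), the proof is complete.
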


The proof is preceeded by a Lemma.

\begin{lem}\label{invertible-everywhere}
Assume that there is a constant $c>0$ such that $D^2 \geq c^2$. Then $F=\normalize{D}$ is invertible.
\end{lem}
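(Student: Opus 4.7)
The plan is to prove that $F = \normalize{D}$ is invertible by verifying the hypothesis of Lemma \ref{spectral-criterion-for-invertibility}, which says that a self-adjoint bounded operator family on a continuous field of Hilbert $\gA$-modules is an isomorphism as soon as its fibrewise spectra stay uniformly bounded away from zero.

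First I would unpack the hypothesis $D^2\geq c^2$. By the definition preceding Theorem \ref{fredholmness-coercivity}, for every $s\in\Gamma_{cv}(M;V)$ and every $x\in X$ one has $\scpr{D_x^2 s_x,s_x}\geq c^2\scpr{s_x,s_x}$ in $\gA$. Since restrictions to $M_x$ of vertically compactly supported sections form a core for $D_x$ (by construction of the domain field in Example \ref{example:basic-field-of-hilbert-modules} and Definition \ref{defn:closed-family-unboundedop}), this inequality extends to $\dom(D_x^2)$, so $D_x^2\geq c^2$ as a self-adjoint regular operator on $L^2(M_x;V_x)$. By Proposition \ref{prop:square-of-selfadjoint}, $\spec(D_x^2)\subset[0,\infty)$, and combining with the lower bound gives $\spec(D_x^2)\subset[c^2,\infty)$, hence $\spec(D_x)\subset(-\infty,-c]\cup[c,\infty)$.

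Next I would apply the functional calculus. Let $f(t)=t/(1+t^2)^{1/2}\in\gC(\overline{\bR})$. By Theorem \ref{thm:global-functional-calculus}, $F=f(D)=(f(D_x))_{x\in X}$ is a bounded operator family, and since $f$ is real-valued and each $D_x$ is self-adjoint, each $F_x$ is self-adjoint, so $F$ is a self-adjoint bounded operator family. By Theorem \ref{thm:kucerovsky}(5),
\[
\spec(F_x)=f\bigl(\overline{\spec(D_x)}\bigr).
\]
On $\{|t|\geq c\}\cup\{\pm\infty\}$ the function $f$ satisfies $|f(t)|\geq c/(1+c^2)^{1/2}=:c'>0$ (this is immediate from monotonicity of $s\mapsto s/(1+s)$ on $[0,\infty)$). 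Therefore
\[
\spec(F_x)\cap(-c',c')=\emptyset\qquad\text{for every }x\in X.
\]

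Finally, the constant function $x\mapsto c'$ is continuous, so Lemma \ref{spectral-criterion-for-invertibility} applies and $F$ is an isomorphism of continuous fields. There is no serious obstacle here: once the pointwise bound $D_x^2\geq c^2$ is read off from the hypothesis, everything reduces to the scalar inequality $|f(t)|\geq c'$ for $|t|\geq c$ plus the spectral gap criterion already established. (One small technicality is the implicit paracompactness of $X$ needed for Lemma \ref{spectral-criterion-for-invertibility}; this is part of the standing hypotheses on the base of the submersion.)
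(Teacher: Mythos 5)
Your argument is correct in substance but follows a genuinely different route from the paper. The paper's proof is a two-line Neumann-series argument: by Proposition \ref{prop:positive-operator-of-order-two} applied to $L=D^2+1$ (strictly positive with constant $1+c^2$), one gets $\norm{(D^2+1)^{-1}}\leq \frac{1}{1+c^2}$, hence $\norm{1-F^2}=\norm{(D^2+1)^{-1}}<1$, so $F^2$ is invertible by the geometric series (uniformly in $x$, so the inverse is a bounded operator family), and then $F^{-1}=F(F^2)^{-1}$ since $F$ commutes with $F^2$. This avoids any discussion of spectra of the unbounded operators $D_x$ and needs no paracompactness of $X$. Your route via the spectral mapping theorem and Lemma \ref{spectral-criterion-for-invertibility} works too and is perhaps more conceptual, but it leaves two small steps to fill. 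First, the extension of $D^2\geq c^2$ from the initial domain to $\dom(D_x^2)$ is cleanest if you rewrite it as $\scpr{D_xu,D_xu}\geq c^2\scpr{u,u}$ on $\Gamma_c(M_x;V_x)$ and extend by graph-norm continuity over the core of $D_x$ (a core for $D_x$ need not be a core for $D_x^2$). Second, the implication $\spec(D_x^2)\subset[c^2,\infty)\Rightarrow\overline{\spec(D_x)}\cap(-c,c)=\emptyset$ is a spectral mapping statement for the \emph{unbounded} operator that the paper never states; it is true, and can be extracted from the tools at hand, e.g.\ by noting that Theorem \ref{thm:kucerovsky}(5) gives $\spec\bigl((1+D_x^2)^{-1}\bigr)=\{\tfrac{1}{1+t^2}:t\in\overline{\spec(D_x)}\}$, while Proposition \ref{prop:positive-operator-of-order-two} bounds this set by $\frac{1}{1+c^2}$, forcing $t^2\geq c^2$ for all $t\in\spec(D_x)$. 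With those two points patched, your proof is complete; the trade-off is that the paper's argument is shorter and purely norm-theoretic, whereas yours makes the spectral gap of $D$ explicit, which is the sort of information one wants anyway for the Fredholm arguments of Lemma \ref{lem:essential-spectral-gap}.
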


\begin{proof}
By Proposition \ref{prop:positive-operator-of-order-two}, $D^2$ is invertible and $\norm{\frac{1}{D^2+1}} \leq \frac{1}{1+c^2}$. Since $1-F^2=\frac{1}{D^2+1}$, it follows that 
\[
\norm{1-F^2} \leq \frac{1}{1+c^2} < 1,
\]
so $F^2$ is invertible. But then $1=F (F (F^2)^{-1}) =( (F^2)^{-1}F)F$ shows that $F$ is invertible.
\end{proof}

\begin{proof}[Proof of Theorem \ref{fredholmness:invertibility-at-infty}]
Pick a function $f \in C^{\infty}_{cv} (M)$ such that $D^2 + f^2 \geq c^2$ on all of $M$, and such that $\norm{[D,f]} \leq c^2 /2$. Define operators
\[
E= \twomatrix{}{D}{D}{}; \; g= \twomatrix{}{-if}{if}{}; \; E_f = E+g.
\]
We want to prove that 
\[
F_0:=\normalize{E}=\twomatrix{}{\normalize{D}}{\normalize{D}}{}
\]
is Fredholm. But 
\[
E_f^2 = \twomatrix{D^2 + f^2 + i [D,f]}{}{}{D^2 + f^2 - i [D,f]} = E^2 + q; \; q= \twomatrix{f^2 + i [D,f]}{}{}{f^2 - i [D,f]}.
\]
and so $E_f^2 \geq \frac{c^2}{2}$. By Lemma \ref{invertible-everywhere}, $F_1:= \normalize{E_f}$ is invertible.
Furthermore
\[
F_0^2-F_1^2 = (1- \frac{1}{E^2+1})-(1-\frac{1}{E^2_f +1}) = \frac{1}{E^2_f +1} - \frac{1}{E^2 +1} =  \frac{1}{E^2_f +1}(1 - (E_f^2+1)\frac{1}{E^2 +1} )
\]
and 
\[
1 - (E_f^2+1)\frac{1}{E^2 +1} = 1- (E^2+1+q)\frac{1}{E^2 +1}=- q \frac{1}{E^2+1}.
\]
By Proposition \ref{compactness1}, $q \frac{1}{E^2+1}$ is compact. Therefore
\[
F_0^2-F_1^2  = -\frac{1}{E^2_f +1} q\frac{1}{E^2 +1} ,
\]
is compact, because $ \frac{1}{E^2_f +1}$ is bounded. Let $G= F_{1}^{-1}$. Then 
\[
(G^2 F_0) F_0 \sim G^2 F_1^2 =1 \text{ and } F_0 (F_0 G^2) \sim F_1^2 G =1,
\]
and this provides a left- and a right parametrix to $F_0$.
\end{proof}

The last of our Fredholmness theorems is a generalization of analytical work by Bunke \cite{Bunke}. Let $D$ be a family of Dirac operators on $M$, with $(M,D)$ complete. Then $D^2$ is self-adjoint by Proposition \ref{prop:square-of-selfadjoint}. We assume that there is a bounded self-adjoint operator $A$ such that 
\begin{equation}\label{intervible-mod-bounded}
\scpr{(D^2 + A^2) u,u} \geq c^2\scpr{u,u}
\end{equation}
holds for all $u \in \dom (D^2)$, with some $c>0$. 
\begin{lem}\label{lem:dsq+asq-selfad}
If (\ref{intervible-mod-bounded}) holds, then $D^2 +A^2$ is self-adjoint and invertible.
\end{lem}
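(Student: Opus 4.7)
The plan: Set $L := D^2 + A^2$ with $\dom(L) := \dom(D^2)$. This domain is dense by Proposition~\ref{prop:square-of-selfadjoint}, and $L$ is symmetric since both $D^2$ and the bounded self-adjoint $A^2$ are; since $\norm{A^2 u} \le \norm{A}^2 \norm{u}$, the graph norms of $L$ and $D^2$ are equivalent, so $L$ is closed. The same checks go through fibrewise, so $L$ defines a densely defined, closed, symmetric operator family on $L^2_\pi(M;V)$ in the sense of Definition~\ref{defn:closed-family-unboundedop}.

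Next I would verify that $L$ is self-adjoint and regular in the sense of Definition~\ref{defn-selfadjoint}, by applying Proposition~\ref{spectral-criterion} pointwise: it suffices to exhibit $\mu > 0$ for which $L \pm i\mu : \dom(D_x^2) \to H_x$ is invertible on each fibre. The key is the factorisation
\[
L + i\mu \;=\; (D^2 + i\mu)\bigl(1 + (D^2 + i\mu)^{-1} A^2\bigr),
\]
which makes sense on $\dom(D^2)$ because $(D^2+i\mu)^{-1}$ maps $H$ into $\dom(D^2)$. Since $\spec(D^2) \subset [0,\infty)$ by Proposition~\ref{prop:square-of-selfadjoint}, Theorem~\ref{thm:kucerovsky}(6) yields $\norm{(D^2+i\mu)^{-1}} \le 1/\mu$; taking $\mu > \norm{A^2}$ makes $\norm{(D^2+i\mu)^{-1} A^2} < 1$, and the second factor is invertible on $H$ by Neumann series. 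A short argument (exploiting that $(D^2+i\mu)^{-1} A^2$ takes values in $\dom(D^2)$, so the Neumann series preserves this submodule) shows that $L + i\mu : \dom(D^2) \to H$ is a bijection. The same treatment applies to $L - i\mu$, and Proposition~\ref{spectral-criterion} then delivers self-adjointness and regularity of each $L_x$.

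Once self-adjointness and regularity are in place, the hypothesis $\scpr{Lu,u} \ge c^2 \scpr{u,u}$ is precisely the strict positivity appearing in Proposition~\ref{prop:positive-operator-of-order-two}, which yields invertibility of $L$ together with the bound $\norm{L^{-1}} \le 1/c^2$. In the family version, the only delicate point is that $\norm{A_x^2}$ is merely locally bounded in $x$, so $\mu$ must be chosen locally; fortunately the estimate $\norm{(D_x^2 + i\mu)^{-1}} \le 1/\mu$ is uniform in $x$, so Lemma~\ref{isomorphism-criterion} promotes the pointwise invertibility of $L \pm i\mu$ to an isomorphism of Banach fields on each such neighborhood, and the uniform bound $\norm{L_x^{-1}} \le 1/c^2$ does the same for $L^{-1}$. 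I do not anticipate a genuine technical obstacle: the whole argument rests on the single quantitative input $\norm{(D^2+i\mu)^{-1}} \le 1/\mu$ and the Neumann series built from it.
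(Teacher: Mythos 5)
Your argument is correct, and it reaches the same two conclusions by the same overall division of labour (first self-adjointness, then invertibility via strict positivity), but the self-adjointness step is handled differently. The paper simply cites the Kato--Rellich theorem for bounded symmetric perturbations of self-adjoint regular operators (\cite[Theorem 4.5]{KL}), whereas you prove that special case from scratch: the factorisation $L+i\mu=(D^2+i\mu)\bigl(1+(D^2+i\mu)^{-1}A^2\bigr)$ together with the resolvent bound $\norm{(D^2+i\mu)^{-1}}\leq 1/\mu$ and a Neumann series is exactly the standard proof of Kato--Rellich in the bounded-perturbation case, and your observation that $u=(D^2+i\mu)^{-1}(v-A^2u)$ forces $u\in\dom(D^2)$ correctly disposes of the domain issue, so Proposition \ref{spectral-criterion} applies. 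What your route buys is self-containedness and an explicit quantitative handle ($\mu>\norm{A}^2$ suffices), at the cost of a page of bookkeeping; what the citation buys is brevity and applicability to relatively bounded perturbations, which is more than is needed here. The invertibility step is identical in both treatments: the hypothesis (\ref{intervible-mod-bounded}) is strict positivity, and Proposition \ref{prop:positive-operator-of-order-two} gives invertibility with $\norm{L^{-1}}\leq 1/c^2$. Your closing remarks on the family aspect (local choice of $\mu$, uniform resolvent bounds, Lemma \ref{isomorphism-criterion}) are sound and in fact more explicit than the paper, which treats the lemma essentially fibrewise.
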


\begin{proof}
That $D^2+A^2$ is self-adjoint is a straightforward application of the Kato-Rellich theorem (\cite[Theorem 4.5]{KL} in the Hilbert module case), and because $D^2 + A^2 \geq c^2$ is strictly positive, it is invertible by Proposition \ref{prop:positive-operator-of-order-two}.
\end{proof}

\begin{thm}\label{thm:bunke-fredholmproperty}
Assume that there exists a real-valued function $f \in C_{cv}^{\infty} (M)$ and a constant $c>0$ such that $D^2 + f^2 \geq c^2$. Then the operator $F= D\frac{1}{(D^2+f^2)^{1/2}}$ is Fredholm. Moreover, $F-F^*$ and $F^* F -1$ are compact.
\end{thm}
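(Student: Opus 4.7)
The plan is to imitate the strategy of Proposition \ref{compactness2}: express $G := (D^2+f^2)^{-1/2}$ via the integral representation of Proposition \ref{prop:positive-operator-of-order-two}, and reduce every compactness claim to the compactness of $\phi(D^2+s^2)^{-1}$ for a fibrewise compactly supported bundle endomorphism $\phi$, which is covered by Proposition \ref{compactness1} together with Remark \ref{rem:to-compactnesstheorems}.

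First I would set up the framework. Writing $L := D^2 + f^2$, Lemma \ref{lem:dsq+asq-selfad} makes $L$ self-adjoint, regular, and invertible with $L \geq c^2$, so $G := L^{-1/2}$ is a bounded self-adjoint operator family with $\|G\| \leq 1/c$. Since $f \in C^{\infty}_{cv}(M)$, the map $f$ is a locally (in $X$) bounded multiplication operator family; the identity $\|L^{1/2}u\|^2 = \|Du\|^2 + \|fu\|^2$ on $\dom(L)$ then gives $\dom(L^{1/2}) = \dom(D)$, so $G$ maps $L^2_\pi(M;V)$ into $\dom(D)$ and $F = DG$ is a bounded operator family whose adjoint $F^*$ is the bounded extension of $GD|_{\dom(D)}$.

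On the dense submodule $\dom(L) = \dom(D^2)$, functional calculus on $L$ yields $GLG = 1$, and hence
\[
F^{*}F \;=\; G(L - f^{2})G \;=\; 1 - (fG)^{*}(fG),\qquad F - F^{*} \;=\; [D, G]\ \text{on}\ \dom(D).
\]
Both identities extend to all of $L^2_\pi(M;V)$ by density and boundedness. The core of the argument is then to prove that $fG$ and $[D,G]$ are compact operator families. For $fG$, I would apply the integral formula
\[
fG \;=\; \frac{2}{\pi}\int_{0}^{\infty} f(L+t^{2})^{-1}\,dt,
\]
use the resolvent identity $(L+t^{2})^{-1} = (D^{2}+t^{2}+1)^{-1} + (D^{2}+t^{2}+1)^{-1}(1-f^{2})(L+t^{2})^{-1}$ so that $f(L+t^{2})^{-1}$ decomposes into compacts by Proposition \ref{compactness1} (the leading term $f(D^{2}+t^{2}+1)^{-1}$ is compact, the second is that compact composed with bounded operators), and verify absolute locally uniform convergence of the integral via $\|f(L+t^{2})^{-1}\| \leq \|f\|/(c^{2}+t^{2})$ (with $\|f\|$ locally bounded in $X$). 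The analogous argument, using the resolvent commutator identity $[D,(L+t^{2})^{-1}] = -(L+t^{2})^{-1}[D,f^{2}](L+t^{2})^{-1}$ and noting that $[D,f^{2}]=[D,f]f + f[D,f]$ is a fibrewise compactly supported bundle endomorphism (whence Remark \ref{rem:to-compactnesstheorems} applies), produces $[D,G]$ as an absolutely convergent integral (controlled by $\|[D,f^{2}]\|/(c^{2}+t^{2})^{2}$) of compact operator families, hence compact.

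Having $F^{*}F - 1$ and $F - F^{*}$ compact, the elementary identity $FF^{*} - 1 = (F - F^{*})F^{*} + F^{*}(F^{*}-F) + (F^{*}F - 1)$ shows $FF^{*} - 1$ is compact as well, so $F^{*}$ is a two-sided parametrix and $F$ is Fredholm. The main obstacle is the careful handling of domains in the Hilbert-module unbounded operator theory: establishing $\dom(L^{1/2}) = \dom(D)$, the equality $GLG = 1$ on $\dom(L)$, and the resolvent commutator formula all require care with compositions of unbounded operators, and one must ensure that the pointwise-compact integrand is a compact operator \emph{family} in the sense of Definition \ref{defn:compact-op-hilbertfield}, which follows via the sufficient condition of Remark \ref{remarks-compact-families}(7) applied locally in $X$ and the locally uniform convergence of the integrals guaranteed by local boundedness of $\|f\|$ and $\|[D,f^{2}]\|$.
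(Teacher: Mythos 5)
Your proposal is correct and follows essentially the same route as the paper: the integral representation of $(D^2+f^2)^{-1/2}$ from Proposition \ref{prop:positive-operator-of-order-two}, reduction of the integrands to compact operator families via Proposition \ref{compactness1} and Remark \ref{rem:to-compactnesstheorems}, and absolute, locally uniform convergence of the resulting integrals. The only variations are minor: you obtain $F^*F-1=-(fG)^*(fG)$ algebraically where the paper instead expands $FF^*=D(D^2+f^2)^{-1}D$ by a commutator, and your explicit resolvent identity relating $(D^2+f^2+t^2)^{-1}$ to $(D^2+t^2+1)^{-1}$ makes the appeal to Proposition \ref{compactness1} (which is stated for functions of $D$, not of $D^2+f^2$) somewhat cleaner than the paper's, as is your explicit verification that $(D^2+f^2)^{-1/2}$ maps into $\dom(D)$.
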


\begin{proof}
We first prove that $F-F^*$ is compact and use the same method as in the proof of Proposition \ref{compactness2}.
We begin with the integral formula
\[
\frac{1}{\sqrt{D^2+f^2}} = \frac{2}{\pi} \int_0^{\infty} \frac{1}{D^2+f^2+t^2} dt
\]
from Proposition \ref{prop:positive-operator-of-order-two}. Then 
\[
F-F^* = \frac{2}{\pi} \int_0^{\infty} [D,\frac{1}{D^2+f^2+t^2}] dt  = \frac{2}{\pi} \int_0^{\infty} \frac{1}{D^2+f^2+t^2}[D^2+f^2+t^2,D]\frac{1}{D^2+f^2+t^2} dt.
\]
Once we can show that the integral is absolutely convergent and the integrand is compact, we have proven that $F-F^*$ is compact, as in the proof of \ref{compactness2}. But 
\[
\frac{1}{D^2+f^2+t^2}[D^2+f^2+t^2,D]\frac{1}{D^2+f^2+t^2} = \frac{1}{D^2+f^2+t^2}[f^2,D]\frac{1}{D^2+f^2+t^2},
\]
which is bounded in operator norm by $C \frac{1}{(t^2+c^2)^2}$. Moreover $[f^2,D]$ has compact support and so $[f,D] \frac{1}{D^2+f^2+t^2}$ is compact, by Remark \ref{rem:to-compactnesstheorems}. It follows that $F^*-F$ is compact. 
Moreover
\[
FF^* = D \frac{1}{D^2+f^2} D= [D, \frac{1}{D^2+f^2}] D + \frac{1}{D^2+f^2} D^2=\frac{1}{D^2+f^2}[f^2,D] \frac{1}{D^2+f^2}D + 1 - \frac{1}{D^2 +f^2} f^2 \sim 1
\]
proves that $F$ is Fredholm.
\end{proof}

\section{\texorpdfstring{$K$}--Theory}\label{sec:KTheory}

\subsection{Definitions}\label{subsec:defns-KTH}

In this section, we describe the model for $K$-theory with which the main results of \cite{JEIndex2} are formulated and proven. Let $\gA$ be a $\cstar$-algebra, with a grading and possibly with a Real structure. We begin by fixing conventions on Clifford modules.

\begin{defn}\label{defn:pseudo-euclideanbundle}
Let $X$ be a topological space. A \emph{pseudo-Riemannian vector bundle} over $X$ is a triple $(V,g, \sigma)$, consisting of a Riemannian vector bundle $(V,g) \to X$ (of finite fibre dimension), together with a self-adjoint involution $\sigma$ on $V$. We abbreviate $V^{\sigma}=(V,\sigma)$. A special case is the trivial bundle $X \times \bR^{p,q}=X \times \bR^{p+q}$, with $\sigma = (1_p,-1_q)$.
Let $(E, \Gamma)$ be a continuous field of graded Real Hilbert-$\gA$-modules on $X$ with grading $\iota$. A \emph{$\Cl(V^{\sigma})$-structure} on $E$ is a $C(X)$-linear map $c:\Gamma(X;V) \to \Lin_{X,\gA} (E)$ which goes to Real endomorphisms, such that
\[
c(v) \iota + \iota c(v) =0; \; c(v)^* = - c(\sigma (v)); \; c(v) c(w) + c(w) c(v)= - 2 g(v, \sigma w).
\]
If $V= X\times \bR^{p,q}$, we also say \emph{$\Cl^{p,q}$-structure}. 
\end{defn}

\begin{defn}[Canonical Clifford module]\label{dfn:canonical-clnnmod}
Let $\pi:V \to X$ be a euclidean vector bundle. Let $\bS_{V,V}:=\Lambda^* V^* \otimes \bC$ be the exterior algebra bundle, with the natural Real structure, inner product and the even/odd-grading $\iota$. It has a natural $\Cl(V \oplus V^-)$-structure, as defined in \S \ref{subsec:defns-abstractFA}. We will denote by $e_V: V \to \End (\bS_{V,V})$ the action by $V$ and by $\eps_V: V^- \to \End (\bS_{V,V})$ the action of $V^-$. 
If $V= \bR^n \to *$, we denote this Clifford module simply by $\bS_{n,n}$.
\end{defn}

\begin{defn}\label{defn:tensor-product-clmod}
Let $X$ be a space and let $V,W \to X$ be two pseudo-Riemannian vector bundles. Let $(E,\eta,c)$ be a continuous field of Hilbert-$\gA$-modules with grading and $\Cl (V)$-structure. Let $(H,\iota,e)$ be a (finite-dimensional) vector bundle with grading and $\Cl (W)$-structure. One the tensor product $E \otimes H$ (which is a continuous field of Hilbert-$\gA$-modules, we define the grading $\eta \otimes \iota$ and the $\Cl (V \oplus W)$-structure $c \otimes e$, which is given by
\[
c \otimes e (v,w):= c(v) \otimes 1 + \eta \otimes e(w).
\]
\end{defn}

Note that there is a canonical isomorphism $\bS_{V,V} \otimes \bS_{W,W} \cong \bS_{V \oplus W , V \oplus W}$. 

\begin{defn}\label{defn:k-cycle}
Let $(X,Y)$ be a space pair and $V \to X$ be a pseudo-Riemannian vector bundle. A \emph{$K^V (\gA)$-cycle} on $(X,Y)$ is a tuple $(E, \iota,c,F)$, such that:
\begin{enumerate}
\item $E$ is a continuous field of graded Real Hilbert $\gA$-modules on $X$ with grading $\iota$.
\item $c$ is a $\Cl(V)$-structure on $E$.
\item $F$ is an (unbounded) odd, self-adjoint and Real Fredholm family on $E$.
\item $F$ is $\Cl (V)$-antilinear, i.e. $F c (v)+c(v) F =0$ for all $v \in V$. 
\item The family $F|_Y$ is invertible.
\end{enumerate}
A $K^V (\gA)$-cycle is \emph{degenerate} if $F$ is invertible. If $V= X \times \bR^{p,q}$, we shall say \emph{$K^{p,q}(\gA)$-cycle}.
By $\bK^V (X,Y;\gA)$, we denote the set of all $K^V (\gA)$-cycles on $(X,Y)$ and by $\bD^V(X,Y;\gA) \subset \bK^V (X,Y;\gA)$ the set of degenerate cycles.
\end{defn}

\begin{remark}\label{rem:use-universe}
When we say that $\bK^V (X,Y;\gA)$ is a \emph{set}, we of course enter the usual set-theoretical problems. For the purpose of this paper, we suggest to work in a Grothendieck universe $\cU$ \cite[p. 185--217]{SGA} which contains $\bR$ and $\gA$. Then $\cU$ contains a model for the classifying space $B U(P)$ of the unitary group of each finitely generated projective $\gA$-module. This is enough to ensure that all constructions of geometric origin lie in $\cU$. 
\end{remark}

There are obvious notions of isomorphism and direct sum of $K^V(\gA)$-cycles. Also, it is clear that $K^V(\gA)$-cycles can be pulled back along maps $f: (X',Y')\to (X,Y)$ of space pairs, which yields a map $f^*: \bK^V(X,Y;\gA)\to\bK^{f^* V}(X',Y';\gA)$. 
A \emph{concordance} between two $K^V(\gA)$-cycles $(E_i,\iota_i,c_i,F_i)$ on $(X,Y)$ is a $K^{\pr_X^* V}\gA$-cycle $(E,\iota,c,F)$ on $(X,Y) \times [0,1]$, such that $(E,\iota,c,F)|_{X \times \{i\}} = (E_i,\iota_i,c_i,F_i)$. Note that we require \emph{equality}, not isomorphism. 

The definitions given so far make sense for general space pairs $(X,Y)$, but for proofs, it is convenient to restrict to the case where $X$ is paracompact and Hausdorff, and $Y \subset X$ is closed (``$(X,Y)$ is a paracompact pair'').

\begin{lem}\label{lem:properties-concordances}\mbox{}
Let $(X,Y)$ be a paracompact pair.
\begin{enumerate}
\item Concordance of $K^V(\gA)$-cycles is an equivalence relation on $\bK^V(X,Y;\gA)$.
\item Isomorphic $K^V(\gA)$-cycles are (canonically) concordant. 
\end{enumerate}
\end{lem}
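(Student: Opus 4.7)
The plan is to reduce both statements to two basic operations on continuous fields of Hilbert-$\gA$-modules: pullback along a continuous map, as in Definition \ref{defn:pullback-field}, and gluing of two fields over a common slice where they literally coincide. Both operations evidently preserve gradings, Real structures, Clifford actions, and pointwise-defined operator families; the Fredholm and invertibility conditions can be checked locally via Lemma \ref{lem:fedholm-local-prop}, and self-adjointness is checked fibrewise.

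For part (1), reflexivity is immediate: pull the cycle $(E,\iota,c,F)$ back along the projection $X \times [0,1] \to X$ to obtain a concordance from the cycle to itself. Symmetry is obtained by pulling any given concordance back along the involution $(x,t) \mapsto (x,1-t)$ of $X \times [0,1]$, which exchanges the two boundary slices on the nose. For transitivity, given concordances $\mathcal{E}_{01}$ from $A_0$ to $A_1$ and $\mathcal{E}_{12}$ from $A_1$ to $A_2$, reparametrize them to live on $X \times [0,1/2]$ and $X \times [1/2,1]$ respectively by pulling back along the affine maps $t \mapsto 2t$ and $t \mapsto 2t-1$, and glue at $X \times \{1/2\}$. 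The equality clause in the definition of concordance guarantees that the restrictions to the gluing slice coincide, so the gluing is well-defined, and the resulting object restricts to $A_0$ at $t=0$ and to $A_2$ at $t=1$.

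For part (2), the same gluing idea yields a canonical concordance. Given an isomorphism $\Phi: (E_0,\iota_0,c_0,F_0) \to (E_1,\iota_1,c_1,F_1)$, take the two pullback (trivial) concordances on $X \times [0,1/2]$ and $X \times [1/2,1]$ respectively, and glue them along $X \times \{1/2\}$ using $\Phi$ to identify $E_0$ with $E_1$. The resulting cycle restricts to $(E_i,\iota_i,c_i,F_i)$ at $t=i$ by construction, and it depends only on $\Phi$ and is functorial under composition, justifying the word ``canonical''.

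The main obstacle, and the step I would execute carefully first, is the gluing construction itself, which I would state as a preliminary lemma: if $(H^{(0)}, \Gamma^{(0)})$ and $(H^{(1)}, \Gamma^{(1)})$ are continuous fields on $X \times [0,a]$ and $X \times [a,1]$ whose restrictions to $X \times \{a\}$ agree as continuous fields (with all extra structure), then there is a unique continuous field on $X \times [0,1]$ restricting to each of them, and its sections are pairs $(s_0,s_1) \in \Gamma^{(0)} \times \Gamma^{(1)}$ with $s_0|_{X \times \{a\}} = s_1|_{X \times \{a\}}$. Axioms (1)--(3) of Definition \ref{defn:cont-field} are straightforward. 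The only point that needs care is axiom (4): given a global set-theoretic section $t$ that can be locally approximated by sections of the glued space, one has to produce an actual element of the proposed section space near a point of $X \times \{a\}$. This is handled by taking local approximations on either side, matching them on the overlap using the equality of restrictions, and interpolating in the $t$-direction via a cutoff in $[a-\eps,a+\eps]$; paracompactness of $X \times [0,1]$ and the totality criterion of Corollary \ref{cor:completion-of-total-subspace} are the inputs. Once the underlying field is in place, the grading, Real structure, Clifford action, self-adjoint operator family, and domain (graph-norm) field all glue automatically because they are given fibrewise, and the Fredholm property and invertibility on $Y \times [0,1]$ pass from each half to the whole by Lemma \ref{lem:fedholm-local-prop}.
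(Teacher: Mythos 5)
Your overall strategy coincides with the paper's: everything is reduced to a gluing lemma for continuous fields of Hilbert modules over two closed pieces meeting in a slice (the paper states this for a normal space covered by two closed subsets, and allows an isomorphism rather than literal equality over the intersection, which is what you implicitly use in part (2)). The reflexivity/symmetry/transitivity bookkeeping via pullbacks and reparametrization, and the reduction of part (2) to gluing two product cycles along $\Phi$ over $X\times\{1/2\}$, match the paper's argument.

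You have, however, misplaced the one genuinely nontrivial point of the gluing lemma. With the glued section space defined as matched pairs $(s_0,s_1)$ agreeing on the slice, axiom (4) of Definition \ref{defn:cont-field} is immediate: its hypothesis already hands you approximating sections that are matched pairs, their restrictions to each closed half locally uniformly approximate $t_0$ and $t_1$, so $t_i$ lies in the section space of the $i$-th field by axiom (4) applied there, and the two restrictions agree on the slice; no interpolation or cutoff in the $t$-direction is needed. The delicate axiom is (2): given a point on the gluing slice and a vector $u$ in the fibre there, you must produce a \emph{matched pair} of sections through $u$. Applying axiom (2) to each half separately gives $s_0$ and $s_1$ hitting $u$ at that one point, but they need not agree at the other points of the slice, and correcting this requires extending the section $s_0-s_1$ of the field restricted to the closed slice to a section over one of the halves. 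This extension of sections over closed subsets is exactly what the paper invokes \cite[Proposition 7]{DD} for, and it is the only external input in the proof; your claim that axioms (1)--(3) are straightforward passes over it. (For part (2) alone you could sidestep the issue, since both halves are product fields and sections of the slice extend by pulling back along the projection; but for transitivity in part (1) the two concordances are arbitrary fields over $X\times[0,1]$, and the extension result is genuinely needed.)
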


\begin{proof}
We need a procedure to glue continuous fields of Hilbert modules. Let $X$ be a normal space, $X_0, X_1 \subset X$ be closed subspaces with $X_0 \cup X_1=X$ and $X_{01} := X_{0} \cap X_1$. Let $E_0,E_1,E_{01}$ be continuous fields of Hilbert $\gA$-modules on $X_0, X_1$ and $X_{01}$ with spaces $\Gamma_0$, $\Gamma_1$ and $\Gamma_{01}$ of continuous sections and let $\phi_{01}^i: E_{01} \to E_i|_{X_{01}}$ be isomorphisms. We define a new continuous field $(E,\Gamma)$ on $X$ as follows. Let 
\[
E_x := 
\begin{cases}
(E_0)_x & x \in X_0 \setminus X_1\\
(E_1)_x & x \in X_1 \setminus X_0\\
(E_{01})_x & x \in X_{01}.
\end{cases}
\]
A section $s \in \prod_{x \in X} E_x$ is in $\Gamma$ if the the elements $s_0 \in \prod_{x\in X_0} (E_0)_x$ and $s_1 \in \prod_{x\in X_1} (E_1)_x$ defined by 
\[
s_i (x) :=
\begin{cases}
s(x) & x \in X_i \setminus X_{01}\\
\phi_{01}^i (s(x)) & x \in X_{01}
\end{cases}
\]
belong to $\Gamma_i$. Then $(E, \Gamma)$ is a continuous field of Hilbert modules. Axioms 1,3 and 4 are clear. For the verification of axiom 2, one uses \cite[Proposition 7]{DD}.

Using this gluing procedure, one can glue concordances together, which shows (1). Let $\phi:(E_0,\iota_0,c_0,F_0)\to (E_1,\iota_1,c_1,F_1)$ be an isomorphism of $K^V(\gA)$-cycles. Consider the product cycles $(E_0,\iota_0,c_0,F_0)\times [0,\frac{1}{2}]$ and $(E_1,\iota_1,c_1,F_1) \times [\frac{1}{2},1]$ and glue them together over $X\times \{\frac{1}{2}\}$, using $\phi$. This proves (2).
\end{proof}

We define 
\begin{equation}\label{defn:k-group12}
K^V(X,Y; \gA):= \bK^V(X,Y,\gA) / \text{concordance}.
\end{equation}
This is an abelian monoid under the direct sum operation. Furthermore, it is contravariantly functorial for maps of space pairs, and we have built in homotopy invariance of this functor. The next goal is to prove that $K^V(X,Y; \gA)$ is a group, i.e. that additive inverses exist. For that aim, we introduce a construction which is useful in some other contexts as well. 

\subsubsection{Extension by zero}

Let $X$ be a paracompact Hausdorff space, let $U \subset X$ be an open subset and let $j: U \to X$ be the inclusion map. 
Let $(E, \Gamma)$ be a continuous field of Hilbert $\gA$-modules on $U$. We first show how to extend $E$ to a field $j_! E$ over $X$. Define 
\[
(j_! E)_x :=
\begin{cases} E_x & x \in U\\
0 & x \not \in U. 
\end{cases}
\]
Let $\Gamma_0 =j_! \Gamma \subset \Gamma$ be the space of all sections $s$ such that the function 
\[
X \to \bR, \; x \mapsto 
\begin{cases}
\norm{s(x)} & x \in U\\
 0 & x \not \in U
\end{cases}
\]
is continuous. It is easy to see that $(j_! E, j_! \Gamma)$ is a continuous field of Hilbert $\gA$-modules, whose restriction to $U$ is $(E, \Gamma)$ (for the second axiom, one uses Urysohn's Lemma). 
Let $D: \dom (D) \to E$ be a self-adjoint unbounded operator family on $E$. We want to define an extension $j_! D: \dom (j_! D) \to j_! E$. We define $\dom(j_! D)$ as the set of all $s \in \dom (D)$ such that $s \in \Gamma_0$ and $Ds \in \Gamma_0$. Let
\[
(j_! D)_x :=
\begin{cases}
D_x: \dom (D_x) \to H_x & x \in U,\\
0 & x \not \in U.
\end{cases}
\]
Then $j_! D : \dom (j_! D) \to j_! E$ is closed, because $D: \dom (D) \to E$ was assumed to be closed. 
Note that even if $D$ was a bounded operator family, then $j_! D$ might be unbounded. The reason is that while the function $x \mapsto \norm{D_x}$ is locally bounded on $U$, the extension of this function to all of $X$ by zero might no longer be locally bounded (see Example \ref{example:unbounded-family-of-bounded-ops}). But if $\norm{D_x}$ is globally bounded on $U$, then $j_! D$ is a bounded operator family on $j_! E$. 
It is clear how to extend gradings and Clifford structures on $E$ to gradings and Clifford structures on $j_! E$. There is a slight problem to overcome. Namely, if $D$ is a Fredholm family, then $j_! D$ does not need to be a Fredholm family in general.

\begin{lem}\label{lem:extension-byzero-fredholm-family}
Let $X$ be a paracompact Hausdorff space. Let $U \subset X$ be an open subset and with inclusion map $j: U \to X$. Let $(E, \Gamma)$ be a continuous field of Hilbert $\gA$-modules on $U$.
\begin{enumerate}
\item Let $F$ be a compact operator family on $E$. Then $j_! F$ is compact if and only if the function 
\[
X \to [0,\infty), \;x\mapsto 
\begin{cases}
\norm{F_x} & x \in U\\
 0 & x \not \in U 
\end{cases}
\]
is continuous. 
\item Let $D$ be an unbounded self-adjoint Fredholm family. Assume that there is a subset $Z \subset U$ which is closed in $X$ so that $D|_{U \setminus Z}$ is invertible. Moreover, assume that each $x\in \overline{U} \setminus U$ admits a neighborhood $V \subset X$ and $c>0$ such that $\spec (D_y) \cap (-c,c) = \emptyset$ for all $y \in V \cap U$. Then $j_! D$ is Fredholm.
\end{enumerate}
\end{lem}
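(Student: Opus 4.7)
The plan is to handle the two parts by local-to-global arguments. For part (1), the forward direction is immediate: by Remark \ref{remarks-compact-families}(3), the norm function $y\mapsto \norm{(j_!F)_y}$ of a compact operator family is continuous on $X$, and by construction this is precisely the function in the statement. For the converse I will verify Definition \ref{defn:compact-op-hilbertfield} at each $x\in X$. At points $x\notin\overline{U}$ the family $j_!F$ vanishes on a whole neighborhood, so there is nothing to check; at points $x\in\overline{U}\setminus U$ the assumed continuity of the extended norm gives a neighborhood on which $\norm{F_y}\leq\eps$, so the zero family serves as an $\eps$-approximation; at $x\in U$ I will start from a local rank-one approximation $G=\sum_i \theta_{s_i,t_i}$ of $F$ on a neighborhood $W$ with $\overline{W}\subset U$ (available by normality of $X$) and multiply each section by a Urysohn bump $\varphi:X\to[0,1]$ with $\varphi\equiv 1$ near $x$ and $\supp(\varphi)\subset W$. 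The sections $\varphi s_i,\varphi t_i$ then lie in $j_!\Gamma$, so $\sum_i\theta_{\varphi s_i,\varphi t_i}$ is an honest rank-one family on $j_!E$ and it agrees with $G$ in a neighborhood of $x$.

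For part (2), the plan is to verify Fredholmness of the bounded transform $\normalize{j_!D}$ in a neighborhood of every point of $X$ and to conclude by Lemma \ref{lem:fedholm-local-prop}. Near $x\in U$, Fredholmness is part of the hypothesis on $D$. Near $x\in X\setminus\overline{U}$ the field $j_!E$ is zero in a whole neighborhood, so the operator is trivially Fredholm. The essential case is $x\in\overline{U}\setminus U$: using the spectral-gap hypothesis I choose a neighborhood $V$ of $x$ in $X$ and $c>0$ with $\spec(D_y)\cap(-c,c)=\emptyset$ for all $y\in V\cap U$, pick a function $h\in\gC(\overline{\bR})$ that agrees with $t\mapsto\sqrt{1+t^2}/t$ outside $(-c/2,c/2)$, and set $Q_y:=h(D_y)$ for $y\in V\cap U$ and $Q_y:=0$ for $y\in V\setminus U$. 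Theorem \ref{thm:global-functional-calculus} shows that the restriction of $Q$ to $V\cap U$ is a bounded operator family on $E|_{V\cap U}$ with uniform bound $\norm{Q_y}\leq\sqrt{1+c^2}/c$, and on each fiber $Q_y$ is the two-sided inverse of $\normalize{D_y}$ because $h(t)\cdot t/\sqrt{1+t^2}=1$ on $\spec(D_y)$.

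The crux is to verify that the extension by zero is itself a bounded operator family on $j_!E|_V$: local boundedness of $\norm{Q_y}$ is automatic, and for $s\in j_!\Gamma|_V$ the section $Qs$ lies in $\Gamma$ over $V\cap U$ (by Theorem \ref{thm:global-functional-calculus}) and satisfies $\norm{(Qs)(y)}\leq \sqrt{1+c^2}/c\cdot\norm{s(y)}$, which tends to zero as $y$ approaches $\overline{U}\setminus U$ by the defining property of $j_!\Gamma$; hence $Qs\in j_!\Gamma|_V$. Then $Q$ is a two-sided inverse of $\normalize{j_!D}|_V$ (the identity on $V\cap U$, the zero operator on the zero module over $V\setminus U$), providing the required local parametrix. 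The hypothesis that $D|_{U\setminus Z}$ is invertible with $Z$ closed in $X$ enters implicitly, allowing me to shrink $V$ to avoid $Z$ and thus work with a genuine inverse on $V\cap U$ rather than only a parametrix. The main obstacle I anticipate is precisely this continuity verification across $\partial U$: once it is reduced to the norm estimate on $Qs$, everything else is either a standard consequence of the functional calculus or routine bookkeeping with the definitions of $j_!\Gamma$ and $j_!E$.
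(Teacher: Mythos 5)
Your proof is correct and follows essentially the same route as the paper: the forward direction of (1) from continuity of the norm of a compact family, the converse by checking the local approximation condition pointwise (with the zero family near $\partial U$), and (2) by locality of the Fredholm property together with uniform invertibility of $\normalize{D_y}$ on $V\cap U$ coming from the spectral gap. The only differences are cosmetic: where the paper simply cites Lemma \ref{isomorphism-criterion} to invert $j_!\normalize{D}$ over $V$, you build the inverse explicitly via functional calculus and verify by hand that its extension by zero is a bounded operator family, and in (1) you add a Urysohn bump at interior points that the paper leaves implicit.
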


\begin{proof}
(1) ``Only if'' is clear since if $j_! F$ is compact, then $x\mapsto \norm{(j_! F)_x}$ is continuous. For the ``if'' direction, it is clear that the condition of Definition \ref{defn:compact-op-hilbertfield} holds at each point $x \in U$ and at each point $x\not \in \overline{U}$. Let $x \in \overline{U} \setminus U$ and let $\epsilon>0$. Then there is a neighborhood $V \subset X$ of $x$ so that $\norm{F_y-0} \leq \epsilon$ for all $y \in V$. This proves part (1).

For part (2), we use that the Fredholm property is a local property, since $X$ is paracompact (Lemma \ref{lem:fedholm-local-prop}). Again, it is clear that $j_! D$ is Fredholm on $U$ and on $X \setminus \overline{U}$. Let $x \in \overline{U} \setminus U$. Then $\norm{(\normalize{D_y})^{-1}} \leq \frac{\sqrt{1+c^2}}{c}$ for all $y \in V$, so that $j_!\normalize{D}$ is invertible over $V$ by Lemma \ref{isomorphism-criterion}. Since $\normalize{D}$ is globally bounded, we have $j_!\normalize{D} = \normalize{j_! D}$, which finishes the proof.
\end{proof}

\begin{lem}\label{k-group-groupcomplete}
Let $(X,Y)$ be a paracompact pair and let $V \to X$ be a pseudo-euclidean vector bundle. Then
\begin{enumerate}
\item Each degenerate $K^V (\gA)$-cycle on $(X,Y)$ is concordant to the trivial cycle $(0,.,.,.)$.
\item The monoid $K^V (X,Y; \gA)$ is in a group. In fact
\[
[E, \iota,c,D] = - [E, -\iota,c,D] = -[E, -\iota,-c,-D].
\] 
\end{enumerate}
\end{lem}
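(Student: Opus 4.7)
My plan for Part~(1) is to extend by zero over a new parameter $t=0$. Let $j\colon X\times(0,1]\hookrightarrow X\times[0,1]$ denote the open inclusion and $p$ the projection to $X$. Pulling the cycle back along $p$ and applying the $j_!$-extension, I will invoke Lemma~\ref{lem:extension-byzero-fredholm-family}(2) with $Z=\emptyset$: degeneracy means $\normalize{D}$ is a bounded invertible operator family, so Lemma~\ref{isomorphism-criterion} gives locally bounded inverse, and since the pulled-back operator is constant in $t$, the uniform spectral-gap hypothesis near $X\times\{0\}$ is automatic. The resulting $K^{p^*V}(\gA)$-cycle on $(X\times[0,1],\,Y\times[0,1])$ restricts to $(E,\iota,c,D)$ at $t=1$ and to the zero cycle at $t=0$; invertibility over $Y\times(0,1]$ is inherited from $D|_Y$, and over $Y\times\{0\}$ is vacuous as the fibre is zero.

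For Part~(2) I first treat the second equality $[E,-\iota,c,D]=[E,-\iota,-c,-D]$. The grading $\iota$, viewed as a bundle automorphism of $E$, is a Real unitary; because $D$ and each $c(v)$ are odd, conjugation by $\iota$ fixes $-\iota$ but sends $c\mapsto -c$ and $D\mapsto -D$. This gives an explicit isomorphism of cycles, so Lemma~\ref{lem:properties-concordances}(2) delivers the required concordance.

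For the first equality, equivalent to $[E,\iota,c,D]+[E,-\iota,c,D]=0$, I will produce a concordance from the sum cycle $(\tilde E,\tilde\iota,\tilde c,\tilde D):=(E\oplus E,\,\iota\oplus(-\iota),\,c\oplus c,\,D\oplus D)$ to a degenerate cycle; Part~(1) then finishes the job. The key device is the Real, self-adjoint bounded involution
\[
R:=\twomatrix{0}{\iota}{\iota}{0}\colon E\oplus E\longrightarrow E\oplus E,
\]
for which a direct algebraic check shows $R^2=I$ and $R$ anti-commutes with each of $\tilde\iota$, $\tilde c(v)$ for every $v$, and $\tilde D$ (the last using oddness of $D$). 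Setting $\tilde F:=\normalize{\tilde D}$, odd functional calculus (Theorem~\ref{thm:kucerovsky}) gives $\{R,\tilde F\}=0$. The rotation
\[
F_\theta:=\cos\theta\cdot \tilde F+\sin\theta\cdot R,\qquad \theta\in[0,\pi/2],
\]
is then a bounded self-adjoint Real operator family on the pullback of $\tilde E$ to $X\times[0,\pi/2]$, still odd and anti-commuting with $\tilde c$, and satisfies $F_\theta^2=\cos^2\theta\cdot\tilde F^2+\sin^2\theta\cdot I$. Fredholmness of $\tilde F$ provides a local essential spectral gap $c\colon X\to(0,\infty)$ (Lemma~\ref{lem:essential-spectral-gap}); the displayed identity then keeps $\esspec(F_\theta)$ bounded away from zero by $\sqrt{c^2\cos^2\theta+\sin^2\theta}>0$, so each $F_\theta$ is Fredholm. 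A parallel pointwise estimate combined with Lemma~\ref{spectral-criterion-for-invertibility} gives invertibility of $F_\theta|_Y$. At $\theta=0$ we have (the bounded-transform form of) the direct-sum cycle; at $\theta=\pi/2$, $F_{\pi/2}=R$ is globally invertible, yielding the degenerate cycle.

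The one non-cosmetic obstacle is reconciling the unbounded representative $\tilde D$ with its bounded transform $\tilde F=\normalize{\tilde D}$: the rotation above is most naturally formulated in the bounded picture while the given cycle uses the unbounded operator. I plan to bridge this via the functional-calculus straightening $s\mapsto \tilde D(1+s\tilde D^2)^{-1/2}$, $s\in[0,1]$, which interpolates between the two representatives and yields the missing concordance; the algebraic verifications for $R$ are otherwise entirely routine.
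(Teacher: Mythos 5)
Your proof is correct and follows essentially the same route as the paper: part (1) is the identical extension-by-zero argument over $X\times[0,1]$, and part (2) is the same rotation-to-a-degenerate-cycle device, merely with the roles of the two equalities interchanged (the paper rotates the sum cycle with $(c\oplus(-c),\,D\oplus(-D))$ against $Q=\twomatrix{}{1}{1}{}$, whereas your $R=\twomatrix{}{\iota}{\iota}{}$ handles $(c\oplus c,\,D\oplus D)$ and the remaining equality comes from conjugation by $\iota$ rather than from the paper's ``similar'' second rotation). Your explicit bridging between the unbounded operator and its bounded transform via the straightening of Lemma \ref{lem:k-theory-different-flavour} makes precise a point the paper's one-line argument leaves implicit.
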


\begin{proof}
(1) Take a degenerate cycle, pull it back to $(0,1] \times X$ and extend it by zero along the embedding $j: (0,1] \times X \to [0,1]\times X$. It follows from Lemma \ref{lem:extension-byzero-fredholm-family} that this extension gives a concordance to the zero cycle. 

(2) Consider the cycle $(E \oplus E, \twomatrix{\iota}{}{}{-\iota},\twomatrix{c}{}{}{-c},\twomatrix{D}{}{}{-D})$ and $Q:= \twomatrix{}{1}{1}{}$. Then $\cos(t) (Q)+ \sin (t) D$ gives a concordance to the degenerate cycle with the operator $Q$. The other equation is proven in a similar way. 
\end{proof}

\subsection{Variations of the definition and comparison with the usual definition}\label{subsec:KT-variants-comparosn}

So far, we have discussed the unbounded Fredholm model for $K$-theory. There is a version which takes only bounded Fredholm families.

\begin{defn}\label{defn:k-theory-variants}
Let $(X,Y)$ be a paracompact pair and $V \to X$ a pseudo-euclidean vector bundle. Let $\gA$ be a Real graded $\cstar$-algebra. Let $\bK^V_b (X,Y;\gA) \subset \bK^V (X,Y;\gA)$ be the set of all cycles $(E,\iota,c,D)$ such that $D$ is a bounded Fredholm family. Let $K^V_b (X,Y; \gA)$ be the set of concordance classes of elements of $\bK^V_b (X,Y; \gA)$. 

Moreover, let $\bK^V_o (X,Y;\gA) \subset \bK^V_b (X,Y;\gA)$ be the set of those cycles $(E, \iota,c,D)$ such that $D^2-1$ is compact, and let $K^V_o (X,Y; \gA)$ be the set of concordance classes of such cycles. 
\end{defn}

\begin{lem}\label{lem:k-theory-different-flavour}
Let $(X,Y)$ be a paracompact space pair. Then the two maps 
\[
\varphi: K_b^V (X,Y; \gA) \to K^V (X,Y; \gA);\; \psi: K^V (X,Y; \gA) \to K_b^V (X,Y; \gA)
\]
defined by the inclusion $\bK^V_b (X,Y;\gA) \subset \bK^V (X,Y;\gA)$ and by $(E, \iota,c,D) \mapsto (E, \iota,c,\normalize{D})$, are mutually inverse bijections.
Furthermore, the inclusion $\bK^V_o(X,Y;\gA)\to \bK^V_b(X,Y;\gA)$ induces a bijection $K^V_o(X,Y;\gA)\to K^V_b(X,Y;\gA)$.
\end{lem}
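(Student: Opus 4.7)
The plan is to exhibit explicit concordances realizing $\varphi$ and $\psi$ as mutually inverse, and to treat the $K_o^V\to K_b^V$ comparison by spectral truncation.

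First I verify $\psi$ is well-defined on cycles. Given an unbounded cycle $(E,\iota,c,D)$, the bounded transform $\normalize{D}=f(D)$ with $f(t)=t/\sqrt{1+t^2}\in\gC(\overline{\bR})$ is a bounded adjointable operator family by Theorem \ref{thm:global-functional-calculus}. Since $f$ is odd and real-valued, Theorem \ref{thm:kucerovsky}(7) guarantees self-adjointness, reality, oddness, and Clifford-antilinearity of $\normalize{D}$; Fredholmness is exactly Definition \ref{defn:fredholm-op-hilbertfield-unbounded}; and invertibility of $D|_Y$ produces a locally uniform spectral gap that transfers to $\normalize{D}|_Y$, giving invertibility by Lemma \ref{spectral-criterion-for-invertibility}. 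Applying the same construction fibrewise to a concordance yields a concordance of bounded transforms, so $\psi$ descends to K-theory.

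For $\psi\circ\varphi=\mathrm{id}$, given a bounded cycle $(E,\iota,c,F)$, I use the linear concordance $F_s:=(1-s)F+s\normalize{F}$ on the pullback field over $X\times[0,1]$. Functional calculus writes $F_s=F\cdot A_s$ with $A_s:=(1-s)+s(1+F^2)^{-1/2}$, a positive bounded self-adjoint family commuting with $F$. On any paracompact cover where $\norm{F}\leq M$, one has $A_s\geq(1+M^2)^{-1/2}>0$, so $A_s$ is invertible with locally bounded inverse, and a parametrix $G$ for $F$ gives $A_s^{-1}G$ as a parametrix for $F_s$. Invertibility over $Y\times[0,1]$ is analogous, and the algebraic structure is preserved by linear combinations.

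For $\varphi\circ\psi=\mathrm{id}$ I would use instead the nonlinear concordance $D_s:=f_s(D)$ with $f_s(t):=t/\sqrt{1+st^2}$, so $D_0=D$ and $D_1=\normalize{D}$, on the pullback field with initial domain the pullback of a core of $D$. Each $D_{s,x}=f_s(D_x)$ is self-adjoint and regular by functional calculus. Continuity of the family on the initial domain follows by applying Lemma \ref{lem:functional-caluclus-with-varying-fct} to the auxiliary family $\tilde g_s(t):=t/[(t+i)\sqrt{1+st^2}]\in\gC(\overline{\bR})$, which is jointly continuous in $(s,t)$ and uniformly bounded by $1$: for $u=(D+i)^{-1}v$ with $v\in\Gamma$, one has $D_s u=\tilde g_s(D)v$, a continuous section of the pullback field. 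The decisive observation is that the bounded transform admits the explicit closed form
\[
\normalize{D_s}=\frac{D}{\sqrt{1+(1+s)D^2}}=h_s(D),\quad h_s(t):=\frac{t}{\sqrt{1+(1+s)t^2}},
\]
with $h_s\in\gC(\overline{\bR})$ jointly continuous in $(s,t)$ and $\norm{h_s}_\infty=1/\sqrt{1+s}\leq 1$; hence $\normalize{D_s}$ is a bounded operator family by Lemma \ref{lem:functional-caluclus-with-varying-fct}.

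The main obstacle is verifying that $\normalize{D_s}$ is a \emph{family} Fredholm operator on $X\times[0,1]$, which I would handle via the essential spectral gap criterion (Lemma \ref{lem:essential-spectral-gap}). Paracompactness supplies a continuous $\eta:X\to(0,\infty)$ with $D_x$ having essential spectral gap $(-\eta(x),\eta(x))$; since $h_s$ is odd, monotone, and vanishes only at $0$ with $h_s'(0)=1$, the transferred gap for $h_s(D_x)$ has half-width $h_s(\eta(x))\geq\eta(x)/\sqrt{1+2\eta(x)^2}$, a continuous positive lower bound across $X\times[0,1]$; invertibility on $Y$ is analogous. Finally, for the bijection $K_o^V\to K_b^V$, given $F\in\bK_b^V$ paracompactness supplies a continuous gap $\eta:X\to(0,\infty)$ with $F_x$ having essential spectrum disjoint from $(-\eta(x),\eta(x))$; choosing a continuous family of odd cutoffs $\chi_x$ with $\chi_x^2=1$ for $|t|\geq\eta(x)/2$ yields $F':=\chi(F)\in\bK_o^V$ (since $1-(F')^2$ has spectral support in the essential gap hence is compact), and the linear interpolation $(1-s)F+sF'$ is a concordance in $\bK_b^V$; injectivity follows by applying the same truncation fibrewise to a $\bK_b^V$-concordance between two $\bK_o^V$-cycles and gluing with endpoint concordances $F_i\to\chi(F_i)$ staying in $\bK_o^V$.
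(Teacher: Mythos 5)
Your proof follows essentially the same route as the paper's: the same interpolating family $f_s(t)=t/\sqrt{1+st^2}$, the same appeal to Lemma \ref{lem:functional-caluclus-with-varying-fct} to see that it is a well-defined self-adjoint unbounded operator family over $X\times[0,1]$, and the same essential-spectral-gap localization for Fredholmness (your closed form $\normalize{D_s}=h_s(D)$ with $h_s(t)=t/\sqrt{1+(1+s)t^2}$ is correct and merely replaces the paper's explicit parametrix $g(D)\sqrt{1+s^2D^2}$ by the equivalent gap-transfer argument via Lemma \ref{lem:essential-spectral-gap}). The remaining parts — the linear interpolation $F_s=FA_s$ for $\psi\circ\varphi$ and the spectral truncation for $K_o^V\to K_b^V$ — correctly supply details that the paper leaves as ``analogous'' or defers to a citation.
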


\begin{proof}
The composition $ \varphi\circ \psi$ sends an (unbounded) cycle $(H, \iota,c,D)$ to the cycle $(H,\iota,c,\normalize{D})$ (which is bounded, but considered as an unbounded cycle). Consider the continuous field $\pr_X^* H$ on $X \times [0,1]$. We claim that the family 
\[
F_{s}:= \frac{D}{(1 + (sD^2))^{1/2}}, \; s  \in [0,1],
\]
is an unbounded Fredholm family on $\pr_X^* H$ which provides the desired concordance between $(H, \iota,c,D)$ and $(H,\iota,c,\normalize{D})$. For $(s,t) \in [0,1] \times \bR$, we have
\[
|\frac{x}{(1+(sx)^2)^{1/2}} (x+i)^{-1} | = | \frac{1}{(1+(sx)^2)^{1/2}}  - \frac{i}{(1+(sx)^2)^{1/2}(x+i)}  |\leq 2.
\]
Therefore Lemma \ref{lem:functional-caluclus-with-varying-fct} applies and shows that $F_s \frac{1}{D+i}$ is a bounded operator family. Hence $F_s$ is an unbounded operator family, and its closure is self-adjoint. 

That $F_s$ is a Fredholm family follows from a spectral-theoretic argument. For $x \in X$, there exists an essential spectral gap $c>0$ in a neighborhood of $x$, by Lemma \ref{lem:essential-spectral-gap}. Because the Fredholm property is local, we may assume that $D$ has a global essential spectral gap.
This means that if $b \in \gC_0(-c,c)$, then $b(D)$ is compact. Now let $a: \bR \to \bR$ be a function with $a \equiv 1$ outside $(-c,c)$ and $a \equiv 0$ on a neighborhood of $0$ and let $g(t)=\frac{a(t)}{t}$. The operator family 
\[
G:=g(D)\sqrt{1+(s^2 D^2)}
\]
is a bounded operator family on $X \times [0,1]$ by Lemma \ref{lem:functional-caluclus-with-varying-fct}. Then $FG = GF = a(D)= 1-(1-a)(D)$, and $(1-a)(D)$ is compact. This finishes the proof that $ \varphi\circ \psi$ is the identity when $Y=\emptyset$. The relative case can be done by a similar argument.

The proof that $\psi \circ \varphi $ is the identity is analogous, but easier (since it only involves bounded operator families).
The last statement follows by a spectral deformation argument, similar to \cite[Lemma A.1]{JEInddiff}.
\end{proof} 

Next, we relate the $K$-theory groups just defined to the ``usual'' $K$-theory groups, at least when $(X,Y)$ is a compact pair and when $V=\bR^{p,q}$ is the trivial bundle. 
As the ``usual'' group, we take Kasparov's $KK$-groups \cite{Kasp}. 

\begin{prop}\label{prop:comparison-k-theory-kk}
For compact pairs $(X,Y)$, there are natural isomorphisms
\[
K^{p,q} (X,Y; \gA) \cong KKR (\Cl^{p,q}; \gC_0 (X-Y;\gA)) \cong KKR (\gC; \gC_0 (X-Y;\gA)\otimes \Cl^{q,p}).
\]
\end{prop}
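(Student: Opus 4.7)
The plan is to use Proposition \ref{prop:hilbert-fields-vs-hilbert-modules} to translate continuous fields of Hilbert-$\gA$-modules on compact $X$ into Hilbert-$\gA(X)$-modules, and then pass to the ideal $\gC_0(X-Y;\gA) \subset \gA(X)$ to extract a Kasparov cycle.

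First I would dispatch the second isomorphism, which is purely formal. The natural identification
\[
\gC_0(X-Y;\gA) \otimes \Cl^{q,p} \cong \gC_0(X-Y; \gA \otimes \Cl^{q,p})
\]
combined with Kasparov's Clifford-algebraic dimension shift \cite{Kasp}, valid in the Real setting \cite{Schr}, gives a natural isomorphism $KKR(\Cl^{p,q}; B) \cong KKR(\bC; B \otimes \Cl^{q,p})$ for any Real graded $\cstar$-algebra $B$, coming from the $KK$-invertibility of $\Cl^{p,q} \otimes \Cl^{q,p}$.

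For the first isomorphism, I would construct a map $\Phi: K^{p,q}(X,Y;\gA) \to KKR(\Cl^{p,q}; \gC_0(X-Y;\gA))$ as follows. By Lemma \ref{lem:k-theory-different-flavour}, I can represent a class on the left by a bounded cycle $(E,\iota,c,F) \in \bK^{p,q}_o(X,Y;\gA)$, so that $F$ is bounded and $F^2-1$ is a compact operator family. Set $\cH := \Gamma(E)$; by Proposition \ref{prop:hilbert-fields-vs-hilbert-modules} this is a Hilbert-$\gA(X)$-module, on which $F$ becomes a bounded adjointable operator by Lemma \ref{lem:operator-families-vs-adjointabkes}. Restrict the module structure to the ideal and set $\cH' := \overline{\cH \cdot \gC_0(X-Y;\gA)}$; since $F$, $\iota$, and the Clifford action are $\gA(X)$-linear, they descend to $\cH'$. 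Declare $\Phi[E,\iota,c,F] := [\cH',\iota,c,F]$. The inverse $\Psi$ would start from a Kasparov cycle $(\cH,F)$ over $(\Cl^{p,q},\gC_0(X-Y;\gA))$, regard $\cH$ as a Hilbert-$\gA(X)$-module via an approximate-unit argument, invoke Proposition \ref{prop:hilbert-fields-vs-hilbert-modules} to recover a continuous field on $X$ whose fibres over $Y$ are zero (so that $F|_Y$ is vacuously invertible), and read off the resulting bounded Fredholm family.

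The matching of equivalence relations is largely formal: a concordance over $X \times [0,1]$ maps to an operator homotopy of Kasparov modules over $\gC_0((X-Y) \times [0,1];\gA)$, and Lemma \ref{k-group-groupcomplete} identifies our degenerate cycles with the degenerate Kasparov cycles already absorbed by Kasparov's equivalence relation; thus $\Phi$ and $\Psi$ descend to mutually inverse homomorphisms of abelian groups.

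The hard part will be the precise dictionary between compactness in the sense of continuous fields (Definition \ref{defn:compact-op-hilbertfield}) and compactness as a $\gC_0(X-Y;\gA)$-linear operator on $\cH'$. Concretely, one must show that a compact $\gA(X)$-linear operator $T$ on $\cH$ that preserves $\cH'$ restricts to a $\gC_0(X-Y;\gA)$-compact operator on $\cH'$, and conversely that every $\gC_0(X-Y;\gA)$-compact operator on $\cH'$ arises in this way from a compact operator family vanishing at $Y$. This requires a careful approximation of rank-one operators $\theta_{s,t}$ by ones cut off with functions in $\gC_0(X-Y;\gA)$, followed by a limit argument using Remark \ref{remarks-compact-families}(4); the reverse direction is the natural translation of the invertibility of $F|_Y$ into the Fredholm condition over the ideal.
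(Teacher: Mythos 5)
Your proposal follows essentially the same route as the paper: reduce to bounded cycles with $F^2-1$ compact via Lemma \ref{lem:k-theory-different-flavour}, pass to the sections vanishing on $Y$ (your $\cH'=\overline{\cH\cdot\gC_0(X-Y;\gA)}$ is the paper's $\Gamma_0$) to obtain a Kasparov $\Cl^{p,q}$--$\gC_0(X-Y;\gA)$-cycle, invert using Proposition \ref{prop:hilbert-fields-vs-hilbert-modules} and Lemma \ref{lem:operator-families-vs-adjointabkes}, and defer the second isomorphism to Kasparov. The only quibble is that a concordance yields a homotopy of Kasparov cycles rather than an operator homotopy, but this does not affect the argument.
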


\begin{proof}
We first construct a map $K_o^{p,q} (X,Y; \gA) \cong KKR (\Cl^{p,q}, \gC_0 (X-Y;\gA))$, which will then be shown to be an isomorphism. Lemma \ref{lem:k-theory-different-flavour} then finishes the proof. Let $(H,\iota,c,F) \in \bK^{p,q}_o (X,Y;\gA)$ and let $\Gamma_0$ be the space of continuous sections of $H$ vanishing on $Y$. Then $\Gamma_0$ is a Hilbert-$\gC_0 (X-Y;\gA)$-module, and it has a (graded, Real) action $c:\Cl^{p,q} \to \Lin_{\gC_0 (X-Y;\gA)} (\Gamma_0)$. The operator $F$ defines a bounded adjointable operator $F\in \Lin_{\gC_0 (X-Y;\gA)} (\Gamma_0)$ which anticommutes with $\Cl^{p,q}$. Altogether $(\Gamma_0, c, F)$ is a $\Cl^{p,q}-\gC_0(X-Y; \gA)$-Kasparov cycle, and this defines the map. Using Lemma \ref{lem:operator-families-vs-adjointabkes} and Proposition \ref{prop:hilbert-fields-vs-hilbert-modules}, one proves that this is indeed an isomorphism. The second isomorphism is proven in \cite{Kasp}.
\end{proof}

If $\gA=\gC$ with the usual Real structure, then $K^{p,q} (X,Y;\gC)= KO^{p-q}(X,Y)$. More generally, the functor, defined for $p \in \bZ$ by
\[
(X,Y) \mapsto \begin{cases}
K^{p,0} (X,Y; \gA) & p \geq 0\\
K^{0,p} (X,Y;\gA) & p \leq 0
\end{cases}
\]
is the generalized cohomology theory represented by the $K$-theory spectrum of the graded $\cstar$-algebra $\gA$.

Similarly, one can prove that
\begin{equation}
 K^V (X,Y; \gA)  \cong KKR (\gC;  \Gamma_0 (X-Y;\Cl (V^- \otimes \gA))
\end{equation}
where $\Gamma_0 (X-Y;\Cl (V^- \otimes \gA)$ is the $\cstar$-algebra of continuous sections of the bundle $\Cl(V⁻)\otimes \gA \to X$ that vanish over $Y$. In this sense, the group $K^V (X,Y;\gA)$ is a twisted version of the group $K^{p,q} (X,Y;\gA)$.

\subsection{Clifford and Bott periodicity, Thom isomorphism}\label{subsec:bott-periodicity}

\subsubsection{Linear algebraic constructions}
The groups $K^V(X,Y; \gA)$ enjoy algebraic symmetries coming from the representation theory of the Clifford algebra.

Let $W \to X$ be a pseudo-euclidean and let $V \to X$ be a euclidean vector bundle. Then there is an isomorphism
\[
\Mor_V:K^W (X,Y; \gA) \to K^{W +V+V^- } (X,Y;\gA),
\]
the \emph{Morita equivalence}, defined on the cycle level by 
\[
(E, \eta, c,D) \mapsto ( E\otimes \bS_{V,V}, \eta \otimes \iota , c \otimes e, D \otimes 1).
\]
Here $(\bS_{V,V}, \iota,e)$ is the canonical $\Cl (V \oplus V^-)$-module, and the tensor product $c \otimes e$ is to be understood in the sense that for $(w,v) \in W \oplus (V \oplus V^-)$, the Clifford action is 
\[
c(w) \otimes 1 + \eta \otimes e(v).
\]
The case $V= \bR^{1,1}$ is especially important. Here $\Mor_{1,1} (E, \eta,c,D)$ is the cycle
\[
(E \otimes E, \twomatrix{\eta}{}{}{-\eta}, \twomatrix{c(w)}{}{}{c(w)},\twomatrix{}{-\eta}{\eta}{}, \twomatrix{}{\eta}{\eta}{},  \twomatrix{D}{}{}{D})
\]
(the third to the fifth entry specify the Clifford action on $W$ and on the unit basis vectors of $\bR^{1,1}$). Furthermore, there is an isomorphism $K^{V + (0,4)}(X,Y;\gA) \to K^{V + (4,0)}(X,Y;\gA)$, using the isomorphism $\Cl^{p+4,q} \cong \Cl^{p,q+4}$ of graded algebras (see e.g. \cite{JEInddiff} for details). Next, there is a \emph{Thom homomorphism}
\[
\bK^{W \oplus V^-} (X; \gA) \stackrel{\thom}{\to} \bK^{W} ((V,V_0); \gA).
\]
Here $V_0:= V \setminus DV$ is the complement of the open unit disc bundle, and we assume that $\pi:V\to X$ is a \emph{euclidean} vector bundle\footnote{It is possible to define the Thom homomorphism for pseudoeuclidean bundles, but then the target will be the \emph{Real} $KR$-groups in the sense of \cite{AtiKR}. We do not need this level of generality, though.}. 
The Thom homomorphism is defined by 
\[
(E, \iota,c,D) \mapsto (\pi^* E , \iota , c|_{W} , D + c|_{V^-}).
\]
Usually, we will use the composition of the Thom isomorphism with the Morita equivalence:
\[
K^W (X; \gA) \stackrel{\Mor_V}{\to} K^{W \oplus V \oplus V^-} (X; \gA) \stackrel{\thom}{\to} K^{W \oplus V} (V,V_0;\gA).
\]
It sends $(E, \eta,c,D)$ to the element
\[
(\pi^* (E \otimes\bS_{V,V} ), \eta \otimes \iota, c\otimes e|_{V}, D\otimes 1+ \eta\otimes e|_{V^-}).
\]
In the special case $V=\bR$, the Thom isomorphism is the \emph{Bott map}
\[
\bott: K^{W} (X; \gA) \to K^{W+(1,0)} (X \times (\bR,\bR \setminus 0); \gA),
\]
and it is easy to verify that it maps $(E,  \eta,c,D)$ to the cycle
\[
(x,t)\mapsto ( E_x \oplus  E_x, \twomatrix{\eta}{}{}{-\eta}, \twomatrix{c}{}{}{c}, \twomatrix{}{-\eta}{\eta}{}, \twomatrix{D}{t\eta}{t\eta}{-D}).
\]

\begin{thm}\label{bott-periodicity}
For compact base spaces $X$, the Thom isomorphism is an isomorphism (and hence so is the Bott map).
\end{thm}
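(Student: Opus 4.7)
The plan is to reduce to Kasparov's Thom isomorphism theorem via the comparison of Proposition \ref{prop:comparison-k-theory-kk}. First, observe that the Thom map in the statement is the composition of the Morita equivalence $\Mor_V$ with the basic Thom homomorphism $\thom\colon K^{W \oplus V^-}(X;\gA) \to K^{W}((V,V_0);\gA)$. The Morita equivalence is a bijection: the canonical Clifford module $\bS_{V,V}$ of Definition \ref{dfn:canonical-clnnmod} implements fibrewise a strong Morita equivalence between the trivial bundle and $\Cl(V \oplus V^-)$, and tensoring with $\bS_{V,V}$ on $K^W(\gA)$-cycles has an inverse given by tensoring with the dual (equivalently, by iterating $\Mor_V$ twice and using the periodicity $\Cl^{n,n} \cong \mathrm{End}(\bS_{n,n})$). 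It therefore suffices to show that $\thom$ is an isomorphism.

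Using Proposition \ref{prop:comparison-k-theory-kk}, both sides translate into Kasparov KK-groups:
\begin{align*}
K^{W \oplus V^-}(X;\gA) &\cong KKR\bigl(\gC;\, \Gamma\bigl(X;\, \Cl(W^-) \otimes \Cl(V)\bigr) \otimes \gA\bigr),\\
K^{W}((V,V_0);\gA) &\cong KKR\bigl(\gC;\, \gC_0(V \setminus V_0;\, \Cl(W^-)) \otimes \gA\bigr).
\end{align*}
Since $V \setminus V_0$ is the open unit disc bundle $DV$, which is homeomorphic to the total space $V$, the right-hand side becomes $KKR(\gC;\, \gC_0(V;\, \Cl(W^-)) \otimes \gA)$. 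Inspecting the defining formula $(E,\iota,c,D) \mapsto (\pi^*E,\iota,c|_W, D + c|_{V^-})$, and passing to the bounded picture via Lemma \ref{lem:k-theory-different-flavour}, one recognises $\thom$ as the external Kasparov product with the standard (dual) Thom class $t_V \in KKR(\Gamma(X;\Cl(V)), \gC_0(V))$, tensored with the identity on the $\Cl(W^-) \otimes \gA$ factor. Kasparov's Thom isomorphism theorem (the Real, graded, $\cstar$-algebra-linear version, originally in \cite{Kasp}) then asserts that this product is an isomorphism, and the Bott map is the special case $V = X \times \bR$.

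The main obstacle is the identification in the second step: one must verify carefully that the geometric construction of $\thom$ agrees on the nose with the Kasparov Thom product. This requires tracking signs in Real structures and Clifford actions, matching gradings and adjoints, and observing that the extension-by-zero of a cycle supported in the open disc bundle $V \setminus V_0$ to the full vector bundle $V$ (in the sense of Lemma \ref{lem:extension-byzero-fredholm-family}) is precisely the construction that realises $D + c|_{V^-}$ as the unbounded representative of the Thom class. A useful reduction that keeps the bookkeeping light is to treat first the case where $D$ is invertible (a degenerate cycle), where the identification with the fibrewise Clifford multiplication operator representing $t_V$ is transparent, and then to handle the general $D$ via the homotopy $D \leadsto D + c|_{V^-}$ afforded by Theorem \ref{fredholmness:invertibility-at-infty} applied on the total space of $V$, which guarantees Fredholmness of $D + c|_{V^-}$ outside a fibrewise compact set.
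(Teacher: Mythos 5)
Your overall strategy coincides with the paper's: translate both sides into Kasparov $KK$-theory via Proposition \ref{prop:comparison-k-theory-kk} and then quote Kasparov. The difference is in the reduction scheme. The paper proves the statement first for the trivial line bundle $V=X\times\bR$, identifies the Bott map with the intersection product with the rank-one Bott element $\beta\in KK(\Cl^{1,0},\gC_0(\bR))$, quotes only that $\beta$ is a $KK$-equivalence, and then obtains higher-rank trivial bundles by iteration and arbitrary bundles by choosing a complement $V\oplus V^\perp\cong X\times\bR^N$ and using transitivity of the Thom maps. You instead invoke the general (Real, graded, coefficient) Kasparov Thom isomorphism for an arbitrary euclidean bundle in one step. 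Both routes work; the paper's buys a much smaller off-the-shelf input (one-dimensional Bott periodicity) at the price of the iteration/complement bootstrap, while yours shortens the bookkeeping but requires the full strength of Kasparov's Thom isomorphism theorem, whose identification with the concretely defined cycle-level map $\thom$ is exactly the ``routine exercise with intersection products'' that both arguments must carry out. Your separate argument that $\Mor_V$ is bijective (via $\Cl^{n,n}\cong\End(\bS_{n,n})$) is fine and is implicitly assumed by the paper.

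Two citations in your write-up do not support what you use them for. First, Proposition \ref{prop:comparison-k-theory-kk} is stated only for \emph{compact} pairs, and $(V,V_0)$ is not one; you should first replace it by the compact pair (closed disc bundle, sphere bundle) via a homotopy/excision step (the paper's trivial case $X\times(\bR,\bR\setminus 0)$ needs the same remark). Second, Theorem \ref{fredholmness:invertibility-at-infty} is a statement about families of first-order \emph{differential} operators on a submersion, proved using the Rellich theorem; it cannot be applied to the abstract perturbation $D+c|_{V^-}$ of a Fredholm family on a continuous field. The correct and elementary reason that $D+c|_{V^-}$ is Fredholm and invertible over $V_0$ is the anticommutation relations: for $v\in V^-$ one has $c(v)^*=c(v)$, $c(v)^2=|v|^2$ and $Dc(v)+c(v)D=0$, whence $(D+c(v))^2=D^2+|v|^2\geq 1$ outside the open disc bundle.
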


\begin{proof}[Proof sketch]
One first deals with the case of a trivial vector bundle $V = X \times \bR$. Using Proposition \ref{prop:comparison-k-theory-kk}, one translates into the framework of Kasparov $KK$-theory. The Bott class in Kasparov theory is the element $\beta \in KK (\Cl^{1,0},\gC_0 (\bR) )$, which is represented by the Hilbert $\gC_0 (\bR)$-module $\gC_0 (\bR, \bS_{1,1})$, with the obvious $\Cl^{1,0}$-representation and the operator $\twomatrix{}{\normalize{t}}{\normalize{t}}{}$. 
It is a routine exercise with Kasparov intersection products to identify the Bott map as the intersection product with $\beta$. Using that $\beta$ is a $KK$-equivalence \cite[Theorem 7]{Kasp}, the result follows. 

By iteration, one gets the case of higher rank trivial bundles, and using that vector bundles on compact spaces have complements, the general case follows, using the transitivity of the Thom isomorphism.
\end{proof}

\bibliographystyle{plain}
\bibliography{index}

\def\cprime{$'$}
\begin{thebibliography}{10}

\bibitem{SGA}
{\em Th\'eorie des topos et cohomologie \'etale des sch\'emas. {T}ome 1:
  {T}h\'eorie des topos}.
\newblock Lecture Notes in Mathematics, Vol. 269. Springer-Verlag, Berlin-New
  York, 1972.
\newblock S{\'e}minaire de G{\'e}om{\'e}trie Alg{\'e}brique du Bois-Marie
  1963--1964 (SGA 4), Dirig{\'e} par M. Artin, A. Grothendieck, et J. L.
  Verdier. Avec la collaboration de N. Bourbaki, P. Deligne et B. Saint-Donat.

\bibitem{AtiKR}
M.~F. Atiyah.
\newblock {$K$}-theory and reality.
\newblock {\em Quart. J. Math. Oxford Ser. (2)}, 17:367--386, 1966.

\bibitem{ASIoeoIV}
M.~F. Atiyah and I.~M. Singer.
\newblock The index of elliptic operators. {IV}.
\newblock {\em Ann. of Math. (2)}, 93:119--138, 1971.

\bibitem{Bla}
Bruce Blackadar.
\newblock {\em {$K$}-theory for operator algebras}, volume~5 of {\em
  Mathematical Sciences Research Institute Publications}.
\newblock Cambridge University Press, Cambridge, second edition, 1998.

\bibitem{Bunke}
Ulrich Bunke.
\newblock A {$K$}-theoretic relative index theorem and {C}allias-type {D}irac
  operators.
\newblock {\em Math. Ann.}, 303(2):241--279, 1995.

\bibitem{Chernoff}
Paul~R. Chernoff.
\newblock Essential self-adjointness of powers of generators of hyperbolic
  equations.
\newblock {\em J. Functional Analysis}, 12:401--414, 1973.

\bibitem{Conway}
John~B. Conway.
\newblock {\em A course in functional analysis}, volume~96 of {\em Graduate
  Texts in Mathematics}.
\newblock Springer-Verlag, New York, 1985.

\bibitem{DD}
Jacques Dixmier and Adrien Douady.
\newblock Champs continus d'espaces hilbertiens et de {$C^{\ast} $}-alg\`ebres.
\newblock {\em Bull. Soc. Math. France}, 91:227--284, 1963.

\bibitem{JEIndex2}
Johannes Ebert.
\newblock Index theory in spaces of noncompact manifolds {II}: {A} stable
  homotopy version of the {A}tiyah-{S}inger index theorem.

\bibitem{JEInddiff}
Johannes Ebert.
\newblock The two definitions of the index difference.
\newblock {\em Trans. Amer. Math. Soc.}, 369(10):7469--7507, 2017.

\bibitem{Goodearl}
K.~R. Goodearl.
\newblock {\em Notes on real and complex {$C^{\ast} $}-algebras}, volume~5 of
  {\em Shiva Mathematics Series}.
\newblock Shiva Publishing Ltd., Nantwich, 1982.

\bibitem{HPS}
Bernhard Hanke, Daniel Pape, and Thomas Schick.
\newblock Codimension two index obstructions to positive scalar curvature.
\newblock {\em Ann. Inst. Fourier (Grenoble)}, 65(6):2681--2710, 2015.

\bibitem{HR}
Nigel Higson and John Roe.
\newblock {\em Analytic {$K$}-homology}.
\newblock Oxford Mathematical Monographs. Oxford University Press, Oxford,
  2000.
\newblock Oxford Science Publications.

\bibitem{KL}
Jens Kaad and Matthias Lesch.
\newblock A local global principle for regular operators in {H}ilbert
  {$C^*$}-modules.
\newblock {\em J. Funct. Anal.}, 262(10):4540--4569, 2012.

\bibitem{Kasp}
G.~G. Kasparov.
\newblock The operator {$K$}-functor and extensions of {$C^{\ast} $}-algebras.
\newblock {\em Izv. Akad. Nauk SSSR Ser. Mat.}, 44(3):571--636, 719, 1980.

\bibitem{Kuc}
Dan Kucerovsky.
\newblock Functional calculus and representations of {$C_0(\mathbb{C})$} on a
  {H}ilbert module.
\newblock {\em Q. J. Math.}, 53(4):467--477, 2002.

\bibitem{Lance}
E.~C. Lance.
\newblock {\em Hilbert {$C^*$}-modules}, volume 210 of {\em London Mathematical
  Society Lecture Note Series}.
\newblock Cambridge University Press, Cambridge, 1995.
\newblock A toolkit for operator algebraists.

\bibitem{LM}
H.~Blaine Lawson, Jr. and Marie-Louise Michelsohn.
\newblock {\em Spin geometry}, volume~38 of {\em Princeton Mathematical
  Series}.
\newblock Princeton University Press, Princeton, NJ, 1989.

\bibitem{MF}
A.~S. Mi{\v{s}}{\v{c}}enko and A.~T. Fomenko.
\newblock The index of elliptic operators over {$C^{\ast} $}-algebras.
\newblock {\em Izv. Akad. Nauk SSSR Ser. Mat.}, 43(4):831--859, 967, 1979.

\bibitem{Ros}
Jonathan Rosenberg.
\newblock {$C^{\ast} $}-algebras, positive scalar curvature, and the {N}ovikov
  conjecture.
\newblock {\em Inst. Hautes \'Etudes Sci. Publ. Math.}, (58):197--212 (1984),
  1983.

\bibitem{Schr}
Herbert Schr{\"o}der.
\newblock {\em {$K$}-theory for real {$C^*$}-algebras and applications}, volume
  290 of {\em Pitman Research Notes in Mathematics Series}.
\newblock Longman Scientific \& Technical, Harlow; copublished in the United
  States with John Wiley \& Sons, Inc., New York, 1993.

\bibitem{SoTr}
Yu.~P. Solovyov and E.~V. Troitsky.
\newblock {\em {$C^*$}-algebras and elliptic operators in differential
  topology}, volume 192 of {\em Translations of Mathematical Monographs}.
\newblock American Mathematical Society, Providence, RI, 2001.
\newblock Translated from the 1996 Russian original by Troitsky, Translation
  edited by A. B. Sossinskii.

\bibitem{Stolz2}
Stephan Stolz.
\newblock Concordance classes of positive scalar curvature metrics.
\newblock Unpublished manuscript, available
  http://www3.nd.edu/~stolz/preprint.html.

\bibitem{WO}
N.~E. Wegge-Olsen.
\newblock {\em {$K$}-theory and {$C^*$}-algebras}.
\newblock Oxford Science Publications. The Clarendon Press, Oxford University
  Press, New York, 1993.
\newblock A friendly approach.

\bibitem{Werner}
Dirk Werner.
\newblock {\em Funktionalanalysis}.
\newblock Springer-Verlag, Berlin, extended edition, 2000.

\bibitem{Wolf}
Joseph~A. Wolf.
\newblock Essential self-adjointness for the {D}irac operator and its square.
\newblock {\em Indiana Univ. Math. J.}, 22:611--640, 1972/73.

\end{thebibliography}

\end{document}